\documentclass[12pt, leqno]{amsart}
\usepackage{graphics,amsmath,amsthm,amssymb,graphicx}
\usepackage[enableskew]{youngtab}
\usepackage{tikz}
\usepackage[all,cmtip]{xy}
\usepackage{color}
\makeatletter
\newsavebox\myboxA
\newsavebox\myboxB
\newlength\mylenA

\newcommand\encircle[1]{%
  \tikz[baseline=(X.base)] 
    \node (X) [draw, shape=circle, inner sep=0] {\strut #1};}

\newcommand*\xoverline[2][0.75]{%
    \sbox{\myboxA}{$\m@th#2$}%
    \setbox\myboxB\null
    \ht\myboxB=\ht\myboxA%
    \dp\myboxB=\dp\myboxA%
    \wd\myboxB=#1\wd\myboxA
    \sbox\myboxB{$\m@th\overline{\copy\myboxB}$}
    \setlength\mylenA{\the\wd\myboxA}
    \addtolength\mylenA{-\the\wd\myboxB}%
    \ifdim\wd\myboxB<\wd\myboxA%
       \rlap{\hskip 0.5\mylenA\usebox\myboxB}{\usebox\myboxA}%
    \else
        \hskip -0.5\mylenA\rlap{\usebox\myboxA}{\hskip 0.5\mylenA\usebox\myboxB}%
    \fi}
\makeatother
\newtheorem{theorem}{Theorem}[section]

\newtheorem{proposition}[theorem]{Proposition}
\newtheorem{corollary}[theorem]{Corollary}
\newtheorem{definition}[theorem]{Definition}

\newtheorem{lemma}[theorem]{Lemma}
\newtheorem{example}[theorem]{Example}

\newtheorem{conjecture}[theorem]{Conjecture}

\setlength{\textwidth}{6.5in}
\setlength{\oddsidemargin}{0.0in}
\setlength{\evensidemargin}{0.0in}
\setlength{\textheight}{9in}
\setlength{\topmargin}{-.4in}

\begin{document}

\title[Abaci structures of $(s,ms\pm1)$-core partitions]{Abaci structures of $(s, ms\pm1)$-core partitions}
\author[R. Nath]{Rishi Nath}
\address{Department of Mathematics} 
\address {York College, City University of New York, Jamaica, NY 11451}
\address{rnath@york.cuny.edu}
\author[J. A. Sellers]{James A. Sellers}
\address{Department of Mathematics, Penn State University, University Park, PA  16802}
\address {sellersj@psu.edu}


\date{\bf\today}

\begin{abstract}
We develop a geometric approach to the study of $(s,ms-1)$-core and $(s,ms+1)$-core partitions through the associated $ms$-abaci. This perspective yields new proofs for results of H. Xiong and A. Straub (originally proposed by T. Amdeberhan) on the enumeration of $(s, s+1)$ and $(s,ms-1)$-core partitions with distinct parts. It also enumerates the $(s, ms+1)$-cores with distinct parts. Furthermore, we calculate the weight of the $(s, ms-1,ms+1)$-core partition with the largest number of parts.  Finally we use 2-core partitions to enumerate self-conjugate core partitions with distinct parts. The central idea is that the $ms$-abaci of maximal $(s,ms\pm1)$-cores can be built up from $s$-abaci of $(s,s\pm 1)$-cores in an elegant way. 
\end{abstract}

\maketitle


\noindent 2010 Mathematics Subject Classification: 05A17 
\bigskip

\noindent Keywords: Young diagrams; symmetric group; $p$-cores; abaci; triangular numbers
\section{Introduction}
\subsection{Partitions and abacus diagrams}
A {\it partition} $\lambda$ of the positive integer $n$ is a weakly decreasing sequence of positive integers which sum to $n.$  We will call $n$ the weight of $\lambda$. Each of the integers which make up the partition is known as a {\it part} of the partition.  For example, $(8,6,5,5,3,2,2,2,1)$ is a partition with weight $n=34$, and is alternatively written as $(8,6,5^2,3,2^3,1).$  

A {\it Young diagram} is a pictorial representation of a partition.  Simply put, it is a finite collection of boxes which are arranged in left-justified rows with the row lengths weakly decreasing (since each row of the Young diagram corresponds to a part in the partition).  
To each box in the Young diagram of $\lambda$ we assign a {\it hook}, which is the set of boxes in the same row and to the right, and in the same column and below, as well as the box itself, which is called the {\it corner} of the hook.  We use matrix notation to label the hooks: $h_{ij}$ is the hook whose corner is in the $i$-th row and the $j$-th column. The number of boxes $|h_{ij}|$ is the {\it hook length} of $h_{ij}$. The {\it first-column hook lengths} are those that appear in the left-most column of the Young diagram.

The first-column hook lengths uniquely determine a partition $\lambda.$ We can generalize the set of first column hooks using the notion of a {\it bead set} $X$ corresponding to $\lambda$, where $X=\{0,\cdots,k-1,|h_{11}|+k,|h_{21}|+k, |h_{31}|+k,\cdots\}$ for some non-negative
integer $k$. It can also be seen as a finite set of non-negative integers, represented by {\it beads} at integral points of the $x$-axis, i.e., a bead at position $x$ for
each $x$ in $X$ and {\it spacers} at positions not in $X$.  Then
$|X|$ is the number of beads that occur after the zero position, wherever that may fall. The {\it minimal} bead-set $X$ of $\lambda$ is one where 0 labels the first spacer, and is exactly the set of first-column hook lengths.
\begin{example} Suppose $\lambda=(4,3,2).$ Then $\{h_{\iota1}\}=\{2,4,6\}$, where $1\leq\iota\leq 3$ is the set of first column hook lengths, and a minimal bead set.  Note that $X'=\{0,2+1,4+1,6+1\}=\{0,3,5,7\}$ and $X''=\{0,1,2,3,2+4,4+4,6+4\}=\{0,1,2,3,6,8,10\}$ are two bead sets that also correspond to $\lambda.$ 
\end{example}
The set of hooks $\{h_{\iota \gamma}\}$ of $\lambda$ correspond bijectively to pairs $(x,y)$ where $x\in X$, $y\not\in X$ and $x>y$; that is, a bead in a bead-set $X$ of $\lambda$ and a spacer to the left of it. Hooks of length $s$ are those such that $x-y=s$. \\

The following result (Lemma 2.4, \cite{O}) allows us to recover the size of the part from its corresponding bead. 
\begin{lemma} \label{onebead} Let $X$ be a bead-set of a partition $\lambda.$ The size of the part $\lambda_{\alpha}$ of $\lambda$ corresponding to the bead $x'\in X$ is the number of spacers to the left of the bead, that is, $\lambda_{\alpha}=|y\not\in X:y<x'|.$
\end{lemma}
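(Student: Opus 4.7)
The plan is to verify the identity by direct counting on the abacus, reducing to the case of the minimal bead-set and then showing that the shift by $k$ does not change the spacer count.

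First I would handle the minimal bead-set $X=\{h_{11},h_{21},\ldots,h_{\ell 1}\}$, where $\ell$ denotes the number of parts of $\lambda$ and $h_{i1}$ the $i$-th first-column hook length. The crucial combinatorial identity is
\[
h_{i1}=\lambda_i+(\ell-i),
\]
which follows directly from the definition of a hook: the hook $h_{i1}$ consists of the corner, the $\lambda_i-1$ boxes to its right in row $i$, and the $\ell-i$ boxes below it in the first column. Since these hook lengths are strictly decreasing in $i$, the beads of $X$ lying in positions $\{0,1,\ldots,h_{i1}-1\}$ are precisely $h_{(i+1)1},h_{(i+2)1},\ldots,h_{\ell 1}$, so there are $\ell-i$ beads to the left of $h_{i1}$. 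The count of spacers to the left is therefore
\[
h_{i1}-(\ell-i)=\lambda_i,
\]
which is exactly the claim for the minimal bead-set.

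Next I would treat a general bead-set, which by definition has the form $X=\{0,1,\ldots,k-1,\,h_{11}+k,\,h_{21}+k,\ldots,h_{\ell 1}+k\}$ for some $k\ge 0$. For the bead at position $h_{i1}+k$, the positions to its left are $\{0,1,\ldots,h_{i1}+k-1\}$. These contain the $k$ added beads at $0,\ldots,k-1$ together with the $\ell-i$ shifted beads $h_{j1}+k$ for $j>i$, giving $k+(\ell-i)$ beads. Subtracting from $h_{i1}+k$ yields $h_{i1}-(\ell-i)=\lambda_i$ spacers to the left, as desired. Thus the shift by $k$ adds equal numbers of positions and of beads below any given bead of the original set, leaving the spacer count invariant.

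I do not anticipate any genuine obstacle; the entire statement reduces to the first-column hook identity together with a bookkeeping argument about the uniform shift. The only point requiring care is making sure the indexing convention (namely that the largest bead corresponds to the largest part $\lambda_1$, and that $\ell$ counts all parts of $\lambda$) is consistent with how bead-sets were introduced in the preceding paragraph.
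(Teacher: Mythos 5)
Your proof is correct. The paper itself gives no argument for this lemma---it simply quotes it as Lemma 2.4 of Olsson's lecture notes \cite{O}---and your computation is the standard one: the identity $h_{i1}=\lambda_i+(\ell-i)$ together with the strict decrease of the first-column hook lengths gives the spacer count for the minimal bead-set, and the uniform shift by $k$ adds exactly $k$ new positions and $k$ new beads below each bead $h_{i1}+k$, leaving that count unchanged. The only minor point worth making explicit is that the padding beads $0,\ldots,k-1$ all lie strictly below every shifted bead (since $h_{i1}\ge 1$), so no overlap disturbs the bookkeeping in your second step; with that noted, the argument is complete and matches the standard proof of this classical fact.
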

Given a fixed integer $s$, we can arrange the nonnegative integers into an $s${\it -grid}, an array of $s$ columns labeled from $0\leq i\leq s-1$, and consider the columns as runners, on which beads are placed in their respective positions.  This organizes a given bead-set by their values modulo $s$. 

\begin{definition}[$s$-abacus]  Consider a bead-set $X$. Placing a bead in each position on the $s$-grid where there is a value $x\in{X}$ gives the {\bf $\textit{s-abacus diagram}$} ${\mathcal S}$ of $X$. Positions not occupied by beads are {\it spacers}. A {\bf $\textit{minimal}$} $s$-abacus ${\mathcal S}$ corresponds to a minimal bead-set $X$ (where the first spacer labels the zero position). 
\end{definition}
\begin{definition}[$s$-abacus position] \label{index} 
Let ${\mathcal S}$ be the $s$-abacus associated to a bead-set $X$. We say that a bead $x\in X$ has {\bf $\textit{s-abacus position}$} $(i,j)\in {\mathcal S}$, where $0\leq i\leq s-1$ and $j\geq 0$ if and only if $i+js=x\in X$.
\end{definition}
\begin{definition} A {\bf $\textit{sub}$}-{\bf $\textit{abacus}$} ${\mathcal S}'$ of an $s$-abacus ${\mathcal S}$ is a set of $s$-abacus positions $(i,j)$ that obey the property that if $(i,j)\in {\mathcal S}'$, then $(i,j)\in {\mathcal S}.$ 
\end{definition}
\subsection{$s$-core and simultaneous $(s,t)$-core partitions}
A {\it s-core partition} (or simply $s$-core) of $n$ is a partition in which no hook of length $s$ appears in the Young diagram. Note that a bead $x$ in runner $i$ with a spacer $y$ one row below, but also in runner $i$, corresponds to an $s$-hook of $\lambda$. A partition $\lambda$ is a $s$-core if and only if its $s$-abacus has the property that no spacer occurs below a bead in a given runner.  This is expressed in the following lemma.
\begin{lemma}\label{score} An $s$-abacus ${\mathcal S}$ corresponds to an $s$-core partition if and only $(i,j)\in {\mathcal S}$ and $j>0$ implies that $(i,j-1)\in {\mathcal S}.$
\end{lemma}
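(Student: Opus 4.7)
The plan is to prove this lemma by unpacking the bijection between $s$-hooks of $\lambda$ and certain bead-spacer pairs in the bead-set $X$, which is stated in the excerpt just before Lemma \ref{onebead}: hooks of length $s$ correspond bijectively to pairs $(x,y)$ with $x \in X$, $y \notin X$, $x > y$, and $x - y = s$. The characterization of the $s$-abacus condition then follows by translating this pair-condition into abacus coordinates via Definition \ref{index}.

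\textbf{Step 1: Translate $s$-hooks into abacus positions.} Given any pair $(x,y)$ with $x \in X$, $y \notin X$, $y \geq 0$, and $x - y = s$, write $x = i + js$ and $y = i' + j's$ with $0 \leq i, i' \leq s-1$ and $j, j' \geq 0$. Since $x - y = (i - i') + (j - j')s = s$ with $|i - i'| < s$, we must have $i = i'$ and $j' = j - 1$. Thus an $s$-hook corresponds exactly to a runner $i$ on which position $(i, j)$ (for some $j \geq 1$) is a bead while position $(i, j-1)$ is a spacer.

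\textbf{Step 2: Prove both directions.} For the forward direction, suppose $\mathcal{S}$ is the abacus of an $s$-core $\lambda$, and assume for contradiction that some $(i,j) \in \mathcal{S}$ with $j > 0$ has $(i, j-1) \notin \mathcal{S}$. Then setting $x = i + js$ and $y = i + (j-1)s \geq 0$ yields a pair producing an $s$-hook in $\lambda$, contradicting the $s$-core hypothesis. Conversely, suppose the abacus condition holds but $\lambda$ has an $s$-hook. By the bijection, there exist $x \in X$, $y \notin X$ with $x - y = s$ and $y \geq 0$; by Step 1 these lie at positions $(i, j)$ and $(i, j-1)$ with $j \geq 1$ on the same runner, violating the hypothesis.

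\textbf{Main obstacle.} There is no serious difficulty here; the only subtlety is verifying that the correspondence between $s$-hooks and bead-spacer pairs localizes to a single runner — i.e., that $x - y = s$ forces $x$ and $y$ to share the same residue class mod $s$ and lie in adjacent rows of that runner. Once this is observed via Definition \ref{index}, both directions of the equivalence reduce to a direct translation between the two languages. One minor point worth flagging is that the lemma as stated does not depend on the bead-set being minimal: any bead-set $X$ representing $\lambda$ produces the same conclusion, since the bijection between hooks and pairs $(x,y)$ is valid for arbitrary $X$.
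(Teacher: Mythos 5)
Your proof is correct and takes essentially the same route as the paper, which states this lemma as an immediate consequence of the observation that a bead in runner $i$ with a spacer one row below in the same runner corresponds to an $s$-hook; you have simply spelled out that correspondence (same residue class mod $s$, adjacent rows) and both directions explicitly. No gaps.
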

We then have the following result. 
\begin{corollary}\label{mscore} An $s$-core partition is an $ms$-core partition for all $m>1.$
\end{corollary}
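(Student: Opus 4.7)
My plan is to prove the corollary by working entirely with abaci and invoking Lemma \ref{score} in both directions: the hypothesis gives the defining condition on the $s$-abacus of $\lambda$, and we must verify the analogous condition on the $ms$-abacus. The whole argument amounts to translating carefully between the two coordinate systems of Definition \ref{index}.

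Fix an $s$-core partition $\lambda$ with bead-set $X$, and suppose a bead $x \in X$ occupies $ms$-abacus position $(i',j')$ with $j' > 0$. By Definition \ref{index}, this means $x = i' + j' \cdot ms$, and in particular $x \geq ms$. Now rewrite $x$ in $s$-abacus coordinates as $x = i + js$ with $0 \leq i \leq s-1$. The inequality $x \geq ms$ combined with $i < s$ forces $j \geq m$. Because $\lambda$ is an $s$-core, Lemma \ref{score} can be applied $m$ times in succession along runner $i$ of the $s$-abacus, yielding that $x - s,\ x - 2s,\ \ldots,\ x - ms$ all belong to $X$.

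In particular $x - ms \in X$, and since $x - ms = i' + (j'-1)\cdot ms$, this says exactly that $(i', j'-1)$ is a bead in the $ms$-abacus. Thus the $ms$-abacus of $\lambda$ satisfies the hypothesis of Lemma \ref{score}, confirming that $\lambda$ is an $ms$-core. The only delicate point is the compatibility between the two abacus coordinate systems, but this is a short computation and presents no real obstacle; the proof is essentially immediate once the bookkeeping is set up.
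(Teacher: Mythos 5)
Your proof is correct and rests on the same idea as the paper's: the $s$-core condition of Lemma \ref{score} propagates beads downward along a runner of the $s$-abacus, so a bead $x$ with $x\ge ms$ forces $x-s, x-2s,\ldots,x-ms$ to be beads as well. The only difference is cosmetic: you verify the $ms$-core criterion directly on the $ms$-abacus by iterating Lemma \ref{score} $m$ times, while the paper argues the contrapositive entirely on the $s$-abacus, observing that an $ms$-hook would place a bead at $(i,j)$ and a spacer at $(i,j-m)$ on the same runner, which is incompatible with Lemma \ref{score}.
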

\begin{proof} An $ms$-hook on an $s$-abacus ${\mathcal S}$ is expressed as a bead in abacus position $(i,j)$ and a spacer in position $(i,j-m)$. Either there are no beads in positions $(i,j-1),\cdots,(i,j-m+1)$ or there is at least one.  In the either case, we violate the condition of Lemma \ref{score}. 
\end{proof}
A result of Sylvester from 1884 gives us the size of the largest possible first-column hook length of a simultaneous $(s,t)$-core. 
\begin{proposition} \label{sly} If $gcd(s,t)$=1, the largest possible hook of an $(s,t)$-core has length $st-s-t.$
\end{proposition}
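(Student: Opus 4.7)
The plan is to translate the $(s,t)$-core condition into an arithmetic closure property on the minimal bead-set $X$ of $\lambda$, and then invoke the classical Sylvester--Frobenius theorem. Applying Lemma \ref{score} to the $s$-abacus of $\lambda$ shows that if $x \in X$ and $x \geq s$, then $x - s \in X$; applying it to the $t$-abacus gives the analogous statement for $t$. Taking contrapositives, the complementary set $Y := \mathbb{Z}_{\geq 0} \setminus X$ of spacers is closed under adding $s$ and under adding $t$. Because the bead-set is minimal, $0 \in Y$, and therefore every non-negative integer of the form $as + bt$ with $a, b \geq 0$ lies in $Y$. The Sylvester--Frobenius theorem, valid since $\gcd(s,t) = 1$, asserts that every integer strictly greater than $st - s - t$ has such a representation; hence every bead of $X$ is at most $st - s - t$, and since the minimal bead-set coincides with the set of first-column hook lengths, the largest first-column hook has length at most $st - s - t$.

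To promote this into a bound on all hooks and to exhibit a partition achieving it, I would use the dictionary between hooks of $\lambda$ and pairs $(x,y)$ with $x \in X$, $y \notin X$, $y < x$, of length $x - y$. Because $0 \notin X$ in the minimal bead-set, pairing the largest bead $x_{\max}$ with $y = 0$ gives a hook of length $x_{\max}$ and no other pair can do better. Thus the largest hook length of any $(s,t)$-core equals its largest first-column hook, and the upper bound $st-s-t$ carries over. For achievability I would exhibit the bead-set
\[
X^{\star} \;=\; \{\, n \in \mathbb{Z}_{\geq 0} : n \text{ is not of the form } as + bt \text{ with } a, b \geq 0 \,\},
\]
verify that $X^{\star}$ is finite (by Frobenius), contains $st - s - t$ (the Frobenius number itself), has $0$ in its complement, and satisfies the closure condition of Lemma \ref{score} on both the $s$- and $t$-abaci (since a representation $x - s = a's + b't$ would immediately yield $x = (a'+1)s + b't$, contradicting $x \in X^{\star}$). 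Hence $X^{\star}$ is the minimal bead-set of an $(s,t)$-core whose largest first-column hook has the desired length.

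The main obstacle is the invocation of Sylvester--Frobenius itself: once that arithmetic fact is in hand, both the upper bound and the extremal construction become essentially mechanical consequences of the bead-set dictionary established in the previous subsection. A smaller but worth-noting subtlety is the step that promotes the bound on first-column hooks to a bound on all hooks, but this falls out at once from the observation that $0$ always labels a spacer in the minimal bead-set, making the largest first-column hook automatically the largest hook overall.
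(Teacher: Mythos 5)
Your proof is correct, but note that the paper itself offers no proof of this proposition: it is quoted as a classical result of Sylvester from 1884, and the nontrivial arithmetic fact you invoke --- that every integer greater than $st-s-t$ is of the form $as+bt$ with $a,b\geq 0$ when $\gcd(s,t)=1$, with $st-s-t$ itself not representable --- is precisely that same 1884 theorem of Sylvester. So what your write-up really supplies is the (standard, and correctly executed) dictionary between the two statements: Lemma \ref{score} on the $s$- and $t$-abaci makes the spacer set $Y=\mathbb{Z}_{\geq 0}\setminus X$ closed under adding $s$ and $t$, minimality puts $0\in Y$, so $Y$ contains the numerical semigroup $\langle s,t\rangle$ and every bead is at most the Frobenius number; your observation that $0\notin X$ forces the largest hook to equal the largest first-column hook is the right way to pass from first-column hooks to all hooks; and your extremal bead set $X^{\star}$ (the gap set of $\langle s,t\rangle$) does satisfy the core conditions by the closure argument you give. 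It is worth recognizing that $X^{\star}$ is exactly the minimal bead set of the maximal core $\kappa_{s,t}$ that the paper later singles out (cf. Proposition \ref{And} and Corollary \ref{bighook}, following Anderson), so your construction anticipates that structure. In short: correct, self-contained modulo Sylvester--Frobenius, and a reasonable thing to record, provided you are clear that the arithmetic input is the theorem being cited rather than something you have reproved.
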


In recent years, the study of core partitions has expanded to include partitions which are simultaneously cores for various integers.   Anderson \cite{A} first enumerated $(s,t)$-cores in the case when $s$ and $t$ are relatively prime. Subsequently, the work of Olsson and Stanton (and others) showed that, when gcd$(s,t)=1$, there is a unique $(s,t)$-core with largest weight, denoted by $\kappa_{s,t}$. We call such a simultaneous core {\it maximal}.   
\begin{theorem}[J. Olsson and D. Stanton, Theorem 4.1, \cite{O-S}] \label{st} Let gcd$(s,t)=1.$ Then there is a unique maximal $(s,t)$-core $\kappa_{s,t}$ such that
\begin{equation*}
|\kappa_{s,t}|=\frac{(s^2-1)(t^2-1)}{24}.
\end{equation*}
\end{theorem}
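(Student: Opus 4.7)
My plan is to translate the $(s,t)$-core condition into the bead-set formalism from the previous sections and then invoke the classical Sylvester--Frobenius theory of the numerical semigroup $\langle s, t\rangle := \{as + bt : a, b \geq 0\}$. Let $X$ be the minimal bead set of $\lambda$ and $Y := \mathbb{Z}_{\geq 0} \setminus X$ its spacer complement. The hook description preceding Lemma \ref{onebead} shows that $\lambda$ has a hook of length $s$ iff some $x \in X$ and $y \in Y$ satisfy $x - y = s$, and analogously for $t$; hence $\lambda$ is an $(s,t)$-core iff $Y$ is closed under addition by both $s$ and $t$. Since $0 \in Y$ by minimality, this forces $Y \supseteq \langle s, t \rangle$ and therefore $X \subseteq G$, where $G := \mathbb{Z}_{\geq 0} \setminus \langle s, t \rangle$ is the Sylvester gap set. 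Because $\gcd(s, t) = 1$, the set $G$ is finite with $|G| = N := (s - 1)(t - 1)/2$, largest element $F := st - s - t = 2N - 1$, and $\sum_{g \in G} g = (s - 1)(t - 1)(2st - s - t - 1)/12$, together with the Sylvester symmetry $g \leftrightarrow F - g$ bijecting $G$ with $\langle s, t\rangle \cap [0, F]$.

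I would then observe that $X = G$ is itself a valid $(s,t)$-core bead set: if $g \in G$ with $g \geq s$ had $g - s \in \langle s, t\rangle$, then $g = (g - s) + s \in \langle s, t\rangle$, contradicting $g \in G$; the case of $-t$ is identical. Call the resulting partition $\kappa_{s,t}$. Combining Lemma \ref{onebead} with the enumeration $X = \{x_1 < \cdots < x_k\}$ gives the formula
\[
|\lambda| \;=\; \sum_{i=1}^k \bigl(x_i - (i-1)\bigr) \;=\; \sum_{x \in X} x \;-\; \binom{|X|}{2}.
\]
Substituting $X = G$ and $|X| = N$, the gap-sum formula together with the identity $2(2st - s - t - 1) - 3((s-1)(t-1) - 2) = st + s + t + 1 = (s + 1)(t + 1)$ simplifies the answer to $|\kappa_{s,t}| = (s-1)(s+1)(t-1)(t+1)/24 = (s^2 - 1)(t^2 - 1)/24$.

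For uniqueness, setting $R := G \setminus X$ and $k := |R|$, a short calculation gives $|\kappa_{s,t}| - |\lambda| = \sum_{g \in R} g - k(F - k)/2$. Here $R$ is a nonempty order filter in the poset on $G$ defined by $g \preceq g'$ iff $g' - g \in \langle s, t\rangle$, and any such filter contains $F$ (from any $g \in R$, at least one of $g + s, g + t$ stays in $G$ until one reaches $F$, and upward closure forces the whole chain into $R$). My plan is a short induction on $k$: the base case $R = \{F\}$ gives $\sum_R g - k(F-k)/2 = F - (F-1)/2 = (F+1)/2 = N > 0$, and the inductive step peels off a minimal element of $R$, using the Sylvester symmetry $R \leftrightarrow F - R \subseteq \langle s, t\rangle \cap [0, F]$ to control the element removed. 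The main obstacle is this uniqueness step: existence and weight of $\kappa_{s,t}$ are routine consequences of the bead-set dictionary and the Sylvester--Frobenius data, whereas proving strict inequality for every nonempty order filter requires careful handling of the interaction between the poset structure on $G$ and the arithmetic of $\sum_R g$ versus $k(F-k)/2$. A cleaner alternative I would also consider is invoking the Vandehey-type containment $\lambda \subseteq \kappa_{s,t}$ for every $(s,t)$-core $\lambda$, which reduces uniqueness to a trivial Young-diagram observation.
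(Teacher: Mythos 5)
The paper never proves this statement at all; it is imported verbatim from Olsson--Stanton (Theorem 4.1 of \cite{O-S}), so there is no internal proof to compare against and your attempt must stand on its own. Its first half does: your dictionary is correct ($\lambda$ is an $(s,t)$-core iff the spacer complement of its minimal bead set is closed under adding $s$ and $t$, hence contains $\langle s,t\rangle$, hence $X\subseteq G$), the set $X=G$ is indeed the bead set of an $(s,t)$-core, and the weight formula $|\lambda|=\sum_{x\in X}x-\binom{|X|}{2}$ combined with $|G|=(s-1)(t-1)/2$ and the gap-sum $\tfrac{1}{12}(s-1)(t-1)(2st-s-t-1)$ does simplify to $(s^2-1)(t^2-1)/24$; your algebraic identity checks out.

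The genuine gap is the uniqueness/maximality step, which you yourself flag as the obstacle. Your reduction is right: $(s,t)$-cores correspond to order filters $R\subseteq G$, every nonempty filter contains $F$, and you must show $\sum_{g\in R}g>k(F-k)/2$ for every nonempty filter with $|R|=k$. (Note you need this inequality even for non-strict maximality: containment of minimal bead sets does not imply weight monotonicity, e.g.\ the bead sets $\{2\}$ and $\{1,2\}$ both give weight $2$, so Anderson-type sub-abacus containment as in Proposition \ref{And} is not a shortcut.) But the induction you propose does not close. Peeling a minimal element $r$ from a filter of size $k$ and applying the inductive bound to $R\setminus\{r\}$ requires $r\ge (F-2k+1)/2$, and this can fail: for $s=5$, $t=6$ one has $G=\{1,2,3,4,7,8,9,13,14,19\}$, $F=19$, and the principal filter $R=\{1,7,13,19\}$ has $1$ as its \emph{only} minimal element, while $(F-2k+1)/2=6$; the inductive hypothesis gives only $\sum_{g\in R\setminus\{1\}}g>24$, whereas concluding $\sum_{g\in R}g>30$ needs more than $29$ (the true values are $39$ and $40$, so the inequality holds but your step cannot see it). The Sylvester symmetry cannot rescue the step, because no bound of the required form on the removed element is true; you would need a stronger inductive statement or a different argument for the inequality. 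The fallback you mention, Vandehey-type containment of every $(s,t)$-core in $\kappa_{s,t}$ as Young diagrams, would indeed finish the proof instantly, but it is itself a nontrivial theorem you have not proved, which leaves you doing exactly what the paper does, namely citing the literature.
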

Using the notation $\kappa_{s,t}$ we restate a canonical result of J. Anderson (Proposition 1,\cite{A}).
\begin{proposition} \label{And} Suppose gcd$(s,t)=1.$ The minimal $s$-abacus ${\mathcal S}$ of an $(s,t)$-core will be a sub-abacus of the minimal $s$-abacus ${\mathcal K}$ of $\kappa_{s,t}$ the maximal $(s,t)$-core partition. Furthermore, if $(i,j)\in {\mathcal S}$ then $(i,j-1)\in {\mathcal S}$ and $(i-t,j)\in {\mathcal S}$ if $i> t$. If $i<t$ then $(i,j)\in {\mathcal S}$ implies $(s-t-1,j-1)\in {\mathcal S}.$
\end{proposition}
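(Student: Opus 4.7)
The proof naturally splits into two stages: first translating the $(s,t)$-core condition into closure rules on the minimal $s$-abacus, and then deducing the sub-abacus inclusion via a union/maximality argument against $\kappa_{s,t}$.

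For the closure rules, I would start with Lemma \ref{score}, which already delivers the vertical rule $(i,j)\in\mathcal{S}$, $j>0 \Rightarrow (i,j-1)\in\mathcal{S}$. To encode the absence of $t$-hooks, I would use the hook description just before Lemma \ref{onebead}: a $t$-hook corresponds to a bead $x \in X$ with a spacer $y = x - t \notin X$. Writing $x = i + js$, a case split on $i$ versus $t$ locates $y$ on the abacus: when $i \geq t$ we get $y$ at position $(i-t, j)$, and when $i < t$ and $j \geq 1$ we get $y$ at position $(s+i-t, j-1)$ (we may take $t < s$, because otherwise any $(s,t)$-core is an $(s, t \bmod s)$-core up to the same abacus closure, although this is the one place the case analysis is not entirely routine). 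Requiring $y \in X$ then gives exactly the two \emph{furthermore} conditions.

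For the sub-abacus statement, I would use the uniqueness of $\kappa_{s,t}$ (Theorem \ref{st}) together with the following union principle. If $X_1, X_2$ are the minimal bead-sets of two $(s,t)$-cores, both sit in $\mathbb{Z}_{\geq 1}$ with $0$ as the first spacer, so $X_1 \cup X_2$ is again a minimal bead-set of some partition $\mu$. The closure rules pass through the union: if $x \in X_1 \cup X_2$ lies in, say, $X_1$, then $x - s \in X_1$ and $x - t \in X_1$ by $(s,t)$-coreness of $\lambda_1$, and both belong to $X_1 \cup X_2$. Hence $\mu$ is itself an $(s,t)$-core, and by Lemma \ref{onebead} (adding beads can only weakly increase the number of spacers below each bead) we have $|\mu| \geq \max(|\lambda_1|, |\lambda_2|)$, with equality forcing $X_1 = X_2$. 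By Sylvester's bound (Proposition \ref{sly}) every bead of an $(s,t)$-core lies below $st$, so the collection of $(s,t)$-cores is finite and the union principle yields a unique maximal element; Theorem \ref{st} identifies it with $\kappa_{s,t}$. Hence for any $(s,t)$-core, $\mathcal{S} \subseteq \mathcal{K}$.

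The main obstacle is the case analysis in the first stage: the map $x \mapsto x - t$ jumps one abacus row when $i < t$, and one must verify both that the resulting position $(s + i - t, j-1)$ is a legitimate abacus position (requiring $j \geq 1$, with the case $j = 0$ producing a negative value that automatically gives no $t$-hook) and that the corresponding closure is vacuous in that degenerate case. A secondary care-point is confirming that the union of two minimal bead-sets of $(s,t)$-cores is again minimal — this holds precisely because $0$ is absent from both — and that the resulting partition inherits both closure rules from the argument above; these two checks together justify invoking Theorem \ref{st} to conclude the inclusion.
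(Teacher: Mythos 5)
Two remarks before the substance: the paper itself does not prove this proposition---it is quoted as Proposition 1 of Anderson \cite{A}---so your attempt can only be measured against the standard argument; and your first stage is fine as far as it goes (indeed your position $(s+i-t,j-1)$ is the correct translation of ``subtract $t$ with wrap-around,'' where the statement's $(s-t-1,j-1)$ appears to be a typo, and you are right that one must assume $t<s$ or reduce $t$ modulo $s$ for the abacus reading to make sense).

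The genuine gap is in your second stage, at the step ``$|\mu|\geq\max(|\lambda_1|,|\lambda_2|)$, with equality forcing $X_1=X_2$.'' The parenthetical justification inverts Lemma \ref{onebead}: adding beads turns spacers into beads, so the number of spacers below each \emph{existing} bead weakly \emph{decreases}, and the total weight of the union can genuinely drop. Concretely, $\{5,6,7\}$ and $\{1\}$ are minimal bead-sets of the $(8,9)$-cores $(5,5,5)$ and $(1)$, of weights $15$ and $1$, while their union $\{1,5,6,7\}$ is the minimal bead-set of $(4,4,4,1)$, of weight $13<15$; so the weight inequality is false, not merely unjustified (and equality does not force $X_1=X_2$ either: for $(3,4)$-cores, $\{2\}$ and $\{1\}$ have union $\{1,2\}$ of weight $2=\max$). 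Consequently the appeal to Theorem \ref{st} cannot force $X_\lambda\cup K=K$, and the containment $\mathcal{S}\subseteq\mathcal{K}$ is not established. The standard repair uses your own closure rules differently: by minimality $0\notin X$, so if a bead $x$ were representable as $x=as+bt$ with $a,b\geq 0$, repeated application of the closure rules (subtracting $s$ or $t$ while staying nonnegative) would place a bead at $0$, a contradiction; hence every bead of an $(s,t)$-core is a nonrepresentable number. The set $N$ of all nonrepresentable positive integers is itself closed under subtracting $s$ and $t$, so it is the minimal bead-set of an $(s,t)$-core, and since $st-s-t\in N$ (Proposition \ref{sly}), Corollary \ref{bighook} identifies that core as $\kappa_{s,t}$, giving $X_\lambda\subseteq N=K$ for every $(s,t)$-core. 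This is essentially Anderson's argument, which the paper cites rather than reproves.
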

As a consequence of Proposition \ref{sly}, Theorem \ref{st}, and Proposition \ref{And}, we have the following useful result.
\begin{corollary} \label{bighook} $\kappa_{s,t}$ is the unique $(s,t)$-core with a hook of length $st-s-t.$
\end{corollary}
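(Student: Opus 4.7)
My plan is to show that the minimal $s$-abacus $\mathcal{S}$ of any $(s,t)$-core $\lambda$ possessing a hook of length $st-s-t$ must coincide with $\mathcal{K}$, forcing $\lambda = \kappa_{s,t}$. I would begin by translating the hypothesis into a bead condition. By the bijection between hooks of $\lambda$ and bead-spacer pairs $(x,y)$ with $x-y$ equal to the hook length, a hook of length $st-s-t$ corresponds to some such pair. Since $0$ is always a spacer in a minimal bead set (so $y \geq 0$) and Proposition~\ref{sly} bounds every bead by $h_{11} \leq st-s-t$, the only possibility is $x = st-s-t$ and $y = 0$. Hence the integer position $st-s-t$ lies in the minimal bead set of $\lambda$.

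By Proposition~\ref{And} we have $\mathcal{S} \subseteq \mathcal{K}$, and $\mathcal{S}$ is closed under the predecessor moves: subtracting $s$ along a runner, and subtracting $t$ (in either of the two coordinate cases). The crux of the proof is to show that $\mathcal{K}$ itself is the closure of $\{st-s-t\}$ under these rules; granting this, $\mathcal{S}$ contains the generator and is closed, hence $\mathcal{S} \supseteq \mathcal{K}$, so $\mathcal{S} = \mathcal{K}$ and $\lambda = \kappa_{s,t}$. I would prove this generation claim by downward induction on integer bead position: it suffices to show that every bead $g$ of $\mathcal{K}$ with $g < st-s-t$ has at least one of $g+s$, $g+t$ again in $\mathcal{K}$, so that $g$ arises as a predecessor of some already-included bead.

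The inductive step rests on the classical identification of the bead positions of $\mathcal{K}$ with the gaps of the numerical semigroup $\langle s,t \rangle$ (implicit in Anderson's correspondence and in Proposition~\ref{sly}). By the symmetry of such gap sets, any gap $g < st-s-t$ satisfies $st-s-t-g = as+bt$ for some $a,b \geq 0$ not both zero; when $a \geq 1$ the identity $(a-1)s + bt \geq 0$ certifies that $g+s$ is again a gap, and the case $b \geq 1$ gives $g+t$ as a gap symmetrically. I expect the main obstacle to be the bookkeeping of translating Proposition~\ref{And}'s abacus-coordinate closure rules into the integer statement ``subtract $s$ or $t$'' and verifying that the predecessors generated stay in $\{1,\dots,st-s-t\}$; once this is done, the downward induction closes the proof.
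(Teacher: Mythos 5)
Your proposal is correct and follows essentially the route the paper intends: the paper states this corollary as a direct consequence of Proposition~\ref{sly}, Theorem~\ref{st}, and Proposition~\ref{And} without writing out a proof, and your argument (a hook of length $st-s-t$ forces a bead at position $st-s-t$ in the minimal bead set, whose closure under the subtract-$s$/subtract-$t$ rules recovers all of $\mathcal{K}$) is exactly the natural fleshing-out of that citation. The only content you add beyond the paper is the explicit verification, via the symmetry of the gap set of $\langle s,t\rangle$, that the Frobenius bead generates the whole abacus of $\kappa_{s,t}$ — a step the paper leaves implicit, and which you handle correctly.
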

We note that the special case of $(s,ms+1)$-cores has attracted particular interest. Early examples of the now-resolved Armstrong conjecture (cf. \cite{A-H-J} \cite{J} \cite{W}) included the $(s,s+1)$ and $(s,ms+1)$ cases, done by F. Zanello and R. Stanley \cite{S-Z} and A. Aggarwal \cite{Ag} respectively. S. Fischel and M. Vazirani \cite{FV}, have studied a bijection between $(s,ms+1)$-cores and dominant Shi regions. 

Self-conjugate simultaneous core partitions are also of interest. B. Ford, H. Mai and L. Sze \cite{F} have, in a manner analogous to Olsson-Stanton, enumerated the self-conjugate $(s,t)$-core partitions. 

In Section 4, we study $(s, ms\pm1)$-cores with distinct parts; in Section 5 we apply our methods to analogize the results of Xiong and Straub and enumerate the self-conjugate simultaneous $(s,s+1)$-core and $(s,ms\pm1)$-core partitions with distinct parts. Before we do, we give an overview of existing results on simultaneous core partitions with distinct parts.
\subsection{Simultaneous $(s,t)$-cores with distinct parts}
Simultaneous core partitions with distinct parts were first introduced as an object of study by T. Amdeberhan. One of the conjectures proposed by T. Amdeberhan (Conjecture 11.9,\cite{Am}) has lead to new results by H. Xiong and A. Straub in this area. More recently A. Zaleski (\cite{Z}) has published some on moments of their generating functions, building on work by S. Ekhad and D. Zeilberger \cite{EZ}.
\begin{theorem}[H. Xiong, Theorem 1.1(1), \cite{X}] \label{Xiong} Let $s\geq 1$ and $F_{s+1}$ be the $(s+1)$st Fibonacci number. Then $F_{s+1}$ is the number of $(s,s+1)$-core partitions with distinct parts.
\end{theorem}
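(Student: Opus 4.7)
The plan is to encode each $(s,s+1)$-core partition $\lambda$ on its minimal $s$-abacus $\mathcal{S}$ and to translate both the simultaneous-core condition and the distinct-parts condition into combinatorial restrictions on the bead data. For $0 \le i \le s-1$, let $n_i$ denote the number of beads on runner $i$ of $\mathcal{S}$. By Lemma \ref{score}, being an $s$-core forces the beads on each runner to be packed in positions $(i,0),(i,1),\ldots,(i,n_i-1)$, so $\lambda$ is determined by the tuple $(n_0,n_1,\ldots,n_{s-1})$, and minimality of $\mathcal{S}$ forces $n_0 = 0$.

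Next I would translate the $(s+1)$-core condition onto $\mathcal{S}$. Since $x - (s+1) = (i-1) + (j-1)s$ whenever $x = i + js$ with $i \ge 1$, avoiding an $(s+1)$-hook at a bead $(i,j)$ with $i,j \ge 1$ amounts to requiring $(i-1,j-1) \in \mathcal{S}$. (The case $i=0$ wraps back to runner $s-1$ but is vacuous because $n_0 = 0$.) Combined with the packed-runner structure, this is equivalent to the single chain of inequalities $n_i \le n_{i-1} + 1$ for $1 \le i \le s-1$.

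I would then handle the distinct-parts condition via Lemma \ref{onebead}: two beads produce equal parts iff no spacer lies strictly between them, i.e., iff they sit at consecutive integer positions. On the $s$-abacus, consecutive integers occupy either $(i,j)$ and $(i+1,j)$ for some $0 \le i \le s-2$, or $(s-1,j)$ and $(0,j+1)$; the latter case is killed by $n_0 = 0$. Hence distinct parts is equivalent to the condition that for each $0 \le i \le s-2$, at most one of $n_i,n_{i+1}$ is positive (otherwise both would place a bead in row $0$). If some $n_i \ge 2$, the chain inequality forces $n_{i-1} \ge n_i - 1 \ge 1$, contradicting this adjacency condition; consequently $n_i \in \{0,1\}$ for every $i$, and the original chain inequalities $n_i \le n_{i-1}+1$ are then automatic. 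The problem therefore reduces to counting binary strings $(n_1,\ldots,n_{s-1})$ of length $s-1$ with no two consecutive $1$'s, which is the standard Fibonacci count equal to $F_{s+1}$.

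The step requiring the most care will be the translation of the $(s+1)$-core condition into the $s$-abacus and the verification that the wrap-around at runner $0$ introduces no additional constraints; once this geometric bookkeeping is set up, the interplay between the core chain inequalities and the distinct-parts condition forces the binary alphabet $\{0,1\}$ and the enumeration drops out immediately.
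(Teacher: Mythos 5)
Your proof is correct, and in outline it follows the same abacus strategy as the paper: encode the partition on its minimal $s$-abacus, show that the core conditions together with the distinct-parts condition force all beads into the bottom row with no two on adjacent runners, and recognize the number of such configurations as $F_{s+1}$. The difference is in how the key reduction is justified and how the count is concluded. The paper obtains the ``beads only in row $0$'' statement from Lemma \ref{row}(1), whose proof leans on Anderson's containment result (Proposition \ref{And}) and the structure of the maximal $(s,s+1)$-core abacus ${\mathcal A}(s)$ (Lemma \ref{topS}), and it then derives the recurrence $F_{s+1}=F_s+F_{s-1}$ by conditioning on whether $(s-1,0)$ is occupied. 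You instead parametrize an $s$-core by its runner bead counts $(n_0,\dots,n_{s-1})$ (packed runners via Lemma \ref{score}, $n_0=0$ by minimality), translate the $(s+1)$-core condition into the chain $n_i\le n_{i-1}+1$, let the adjacency condition force $n_i\in\{0,1\}$, and finish by quoting the standard count of binary strings of length $s-1$ with no two consecutive ones. Your route is more self-contained---it bypasses Proposition \ref{And} and the maximal-core machinery entirely---at the cost of not building the framework (Lemma \ref{row}) that the paper reuses verbatim for the $(s,ms\pm1)$ cases. One small wording point: two beads give equal parts iff no spacer lies strictly between them, which is not literally the same as their being consecutive; but since any failure of distinct parts produces some pair of consecutive beads, the equivalence you actually use (distinct parts iff no two consecutive beads, i.e.\ Lemma \ref{stinct}) is correct, so this is a phrasing matter rather than a gap.
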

\begin{theorem}[A. Straub, Theorem 4.1, \cite{S}] \label{Straub} Let $m,s\geq 1$. The number $E^-_m(s)$ of $(s,ms-1)$-core partitions with distinct parts is characterized by $E^-_m(1)=1$ and $E^-_m(2)=m$ and, for $s\geq3$,
$$E^-_m(s)=E^-_m(s-1)+mE^-_m(s-2).$$ 
\end{theorem}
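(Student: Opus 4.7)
The plan is to characterize $(s, ms-1)$-cores with distinct parts by a clean numerical condition on the minimal $s$-abacus, and then read off the recursion by conditioning on the leftmost runner. Let $c_i$ denote the number of beads on runner $i$ of the minimal $s$-abacus of an $(s, ms-1)$-core $\lambda$. The $s$-core condition (Lemma~\ref{score}) flushes beads to the bottom of each runner and, combined with minimality, forces $c_0 = 0$. The key step is to translate the $(ms-1)$-core condition: an $(ms-1)$-hook corresponds to a bead at $(i, j)$ paired with a spacer at $(i+1, j - m)$, or, for $i = s-1$, wrapping to $(0, j - m + 1)$. I would show that this no-hook requirement is equivalent to the inequalities $c_{i+1} \geq c_i - m$ for $0 \leq i \leq s - 2$, together with $c_{s-1} \leq m - 1$ (the latter coming from the wrap into the empty runner $0$).

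Next I would encode the distinct-parts condition. By Lemma~\ref{onebead}, if the minimal bead set is $b_1 < b_2 < \cdots$, then the parts of $\lambda$ are $b_\alpha - (\alpha - 1)$, so distinct parts is equivalent to $b_{\alpha + 1} \geq b_\alpha + 2$, i.e., no two beads at consecutive integer positions. On the $s$-abacus, consecutive integers occur as $(i, j)$ and $(i + 1, j)$ (or as $(s-1, j)$ and $(0, j+1)$, which is automatic since runner $0$ is empty), so the condition becomes: for $1 \leq i \leq s - 2$, at least one of $c_i$ and $c_{i+1}$ vanishes. Combined with $c_{i+1} \geq c_i - m$, any positive $c_i$ must satisfy both $c_{i+1} = 0$ and $c_i \leq m$. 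Consequently, $E^-_m(s)$ equals the number of sequences $(c_1, \ldots, c_{s-1}) \in \{0, 1, \ldots, m\}^{s-1}$ with no two consecutive nonzero entries and with $c_{s-1} \leq m - 1$.

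The recursion then falls out by conditioning on $c_1$. If $c_1 = 0$, the tail $(c_2, \ldots, c_{s-1})$ is itself a valid sequence for $E^-_m(s-1)$, contributing $E^-_m(s-1)$. If $c_1 > 0$, distinct parts forces $c_2 = 0$, there are $m$ admissible values for $c_1$ in $\{1, \ldots, m\}$, and the remaining tail $(c_3, \ldots, c_{s-1})$ is a valid sequence for $E^-_m(s-2)$, contributing $m \cdot E^-_m(s-2)$. The base cases $E^-_m(1) = 1$ (the empty sequence) and $E^-_m(2) = m$ (one entry in $\{0, \ldots, m - 1\}$) are immediate. The main obstacle is the first step: verifying that the global $(ms-1)$-core hook condition is captured exactly by the stated local inequalities and that these conditions are not merely necessary but also sufficient, so that every valid sequence does arise from a genuine $(s, ms-1)$-core. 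This sufficiency can be checked by a direct count on the $s$-abacus, turning each bead--spacer pair at distance $ms-1$ into a hook via Lemma~\ref{onebead} and the bead--spacer dictionary preceding it.
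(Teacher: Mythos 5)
Your proposal is correct, but it takes a genuinely different route from the paper. You work entirely on the minimal $s$-abacus, recording only the number $c_i$ of beads on runner $i$: the $s$-core condition flushes beads downward and minimality empties runner $0$; the $(ms-1)$-core condition becomes $c_{i+1}\geq c_i-m$ together with $c_{s-1}\leq m-1$; and distinct parts (no two beads at consecutive positions, as in Lemma \ref{stinct}) forbids consecutive nonzero runners. The resulting characterization---sequences $(c_1,\dots,c_{s-1})$ with entries in $\{0,\dots,m\}$, no two consecutive entries nonzero, and $c_{s-1}\leq m-1$---is indeed exactly equivalent: necessity follows from the top bead of each runner, and sufficiency is immediate because the combined constraints force $c_i\leq m$, so no bead sits high enough to create an $(ms-1)$-hook. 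Your conditioning on $c_1$ then gives the recursion for $E^-_m$ directly, with the case $c_1>0$ using $s\geq 3$ so that $c_1$ is not the last entry, and the base cases match. The paper instead proves Theorem \ref{Straub} indirectly: it passes to the $ms$-abacus of the maximal $(s,ms-1)$-core, decomposes it as a wedge of smaller abaci (Lemma \ref{BAM}, Theorem \ref{Emax}), shows that distinct-part cores occupy only the first row of that abacus (Lemma \ref{row}), derives the mixed identity $E^-_m(s)=E^+_m(s-1)+(m-1)E^+_m(s-2)$ of Theorem \ref{middle}, and then combines it algebraically with the $E^+$ recursion of Theorem \ref{Straub2}. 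Your argument is shorter and self-contained for this single theorem, closer in spirit to a direct numerical characterization of the bead-set; the paper's longer path buys a uniform framework treating the $(s,s+1)$, $(s,ms+1)$ and $(s,ms-1)$ cases together, gives a partition-theoretic meaning to the relation in Theorem \ref{middle}, and reuses the same $ms$-abacus machinery for the later results on $(s,ms-1,ms+1)$-cores and self-conjugate cores.
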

Our paper develops a framework from which results of H. Xiong and A. Straub in this direction follow naturally. That is, we use the geometry of the $s$-abacus of the maximal $(s,s+1)$-core, and that of the $ms$-abacus of the maximal $(s,ms-1)$-core and $(s,ms+1)$-core partitions, to prove Theorems \ref{Xiong} and \ref{Straub} in a uniform manner. Before proving Theorem \ref{Straub}, however, we enumerate $(s,ms+1)$-core partitions with distinct parts (Theorem \ref{Straub2}). In doing so, we provide a partition-theoretic meaning to a numerical relation first observed by Straub (see Lemma 4.3, \cite{S}). This lays the groundwork for the proof of Theorem \ref{Straub}.
\begin{theorem} \label{Straub2} Let $m,s\geq 1$. The number $E^+_m(s)$ of $(s,ms+1)$-core partitions into distinct parts is characterized by $E^+_m(1)=1$, $E^+_m(2)=m+1$ and, for $s\geq3,$ 
$$E^+_m(s)=E^+_m(s-1)+mE^+_m(s-2).$$
\end{theorem}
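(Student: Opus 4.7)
The plan is to exploit the structure of the minimal $ms$-abacus of $\kappa_{s,ms+1}$ developed in the earlier sections. Combining the $s$-core derivation of Lemma \ref{score} with the analogous $(ms+1)$-hook derivation, each $(s,ms+1)$-core corresponds bijectively to a sub-abacus of this $ms$-abacus that is closed under both rules. Using that the minimal $ms$-abacus of $\kappa_{s,ms+1}$ is built from $m$ copies of the $s$-abacus of $\kappa_{s,s+1}$ (as elaborated in the earlier sections), one packages the closure data for each such sub-abacus into a tuple $(n_0, n_1, \ldots, n_{s-1})$ of nonnegative integers satisfying $n_0 = 0$ and $n_i \leq n_{i-1} + m$ for $i = 1, \ldots, s-1$, where $n_i$ counts the beads across the $i$-th class of runners.

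The next step is to translate the distinct-parts hypothesis. By Lemma \ref{onebead}, $\lambda$ has distinct parts if and only if its minimal bead-set contains no two consecutive integers, which on the $ms$-abacus forbids beads at adjacent positions $(i,j)$ and $(i+1,j)$ sharing a row (the wrap-around from $(ms-1,j)$ to $(0, j+1)$ is automatic because $n_0 = 0$). Tracing this through the tuple encoding yields the purely local condition that for every $i \in \{0, 1, \ldots, s-2\}$, at most one of $n_i$ and $n_{i+1}$ is nonzero.

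The recurrence then follows by conditioning on $n_{s-1}$. If $n_{s-1} = 0$, the prefix $(n_0, \ldots, n_{s-2})$ is a valid tuple of the same kind for parameters $(s-1, m)$ and contributes $E^+_m(s-1)$. If $n_{s-1} \geq 1$, the distinct-parts constraint forces $n_{s-2} = 0$, so the inequality $n_{s-1} \leq n_{s-2} + m = m$ allows exactly $m$ choices of $n_{s-1}$, while the shorter prefix $(n_0, \ldots, n_{s-3})$ is free to be any tuple of the same kind for parameters $(s-2, m)$, counted by $E^+_m(s-2)$; this contributes $m \cdot E^+_m(s-2)$. Summing gives the claimed recurrence. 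The base cases are immediate: for $s = 1$ only the tuple $(0)$ is allowed, giving $E^+_m(1) = 1$, and for $s = 2$ the tuples $(0, n_1)$ with $0 \leq n_1 \leq m$ give $E^+_m(2) = m+1$.

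The real work is the first step: rigorously identifying the bijection between sub-abaci of $\kappa_{s,ms+1}$ representing $(s, ms+1)$-cores and the admissible tuples, i.e., verifying that every closure condition on the $ms$-abacus collapses to the single inequality $n_i \leq n_{i-1} + m$. Once that geometric bookkeeping is in place, the distinct-parts translation and the two-case enumeration are both routine, and the partition-theoretic content of Straub's numerical identity (Lemma 4.3 of \cite{S}) emerges as exactly the case split on $n_{s-1}$.
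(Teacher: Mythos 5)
Your route is sound, and in the end it performs exactly the same count as the paper: your case split on $n_{s-1}$ (zero versus $1\le n_{s-1}\le m$) is the paper's split on whether the abacus position $(s-1,0)$ is occupied, with the factor $m$ arising there as the $m$ admissible lengths of the downward-closed chain of beads on the runners congruent to $s-1$ (the positions $(s-1,0),(2s-1,0),\dots,(ms-1,0)$). The real difference is only in the setup: the paper obtains its structure geometrically, from ${\mathcal E}^+_m(s)=\wedge_m{\mathcal A}(s)$ (Lemma \ref{BAM}) together with Lemma \ref{row}(3), which confines all beads of a distinct-parts core to the first row of the $ms$-abacus, whereas you appeal to a numerical parametrization of all $(s,ms+1)$-cores by tuples $(n_0,\dots,n_{s-1})$ with $n_0=0$ and $n_i\le n_{i-1}+m$.

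The one genuine gap is the step you yourself flag: that parametrization is asserted, not proved, and it is exactly where the paper spends its effort (Proposition \ref{And}, Theorem \ref{Emax}, Lemmas \ref{stinct} and \ref{row}). The claim is true, and it closes quickly if you argue on the minimal bead set $X$ rather than on sub-abaci of the maximal core: since $\lambda$ is an $s$-core, Lemma \ref{score} says the beads of $X$ in residue class $i$ modulo $s$ form the downward-closed set $\{i,i+s,\dots,i+(n_i-1)s\}$; minimality forces $n_0=0$; and the absence of $(ms+1)$-hooks, tested at the largest bead $i+(n_i-1)s$ of a nonempty class $i\ge 1$, is equivalent to $n_i\le n_{i-1}+m$ (the spacer $ms+1$ steps below it sits in class $i-1$). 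With that lemma in hand, your distinct-parts translation is immediate---a nonempty class $i$ always contains the bead $i$ itself, so $n_i,n_{i+1}\ge 1$ would give consecutive beads, and $n_0=0$ disposes of the wrap-around---and it also recovers Lemma \ref{row}(3) for free, since $n_i\ge m+1$ would force $n_{i-1}\ge 1$, so distinct parts implies $n_i\le m$ for all $i$, i.e., all beads lie in the first row of the $ms$-abacus. Your two-case enumeration and base cases are then correct as written; supply the paragraph above (or cite the paper's abacus machinery) and the proof is complete.
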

These proofs appear in Section 4.
\subsection{Simultaneous $(s,ms-1,ms+1)$-core partitions}
Suppose $s,t,u$ are positive integers such that gcd$(s,t,u)=1$. Enumerating and calculating the weight of simultaneous $(s,t,u)$-cores is more complicated than simultaneous $(s,t)$-cores, in part because no analogous result to Sylvester's characterization of the maximum possible hook length exists. However, for special cases, progress has been made. T. Amdeberhan and E. Leven \cite{AL}, R. Nath and J. Sellers, \cite{Nath-Sellers}, Xiong \cite{X}, and Yang-Zhang-Zhou \cite{Y-Z-Z} investigated $(s-1,s,s+1)$-cores, and both the weight of the maximal core and the number of such cores is known. V. Wang \cite{W} has enumerated $(s,s+d,s+2d)$-cores. A. Aggarwal \cite{Ag0} has also studied containment properties of $(s,t,u)$-cores. 

The methods described herein also allow us to study another family of triply simultaneous cores: in particular, we calculate the weight of the {\bf longest} $(s,ms-1,ms+1)$-core partition (that is, the core partition with the largest number of parts). 
\begin{theorem} \label{m2} The weight of the longest $(s,ms-1,ms+1)$-core is
\begin{enumerate}
\item $\frac{m^2t(t-1)(t^2-t+1)}{6}$ if $s=2t-1$\\
\item $\frac{m^2(t-1)^2(t^2-2t+3)}{6} - \frac{m(t-1)^2}{2}$ if $s=2t-2.$
\end{enumerate}
\end{theorem}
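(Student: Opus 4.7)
My plan is to construct the longest $(s, ms-1, ms+1)$-core directly on the $s$-abacus and then compute its weight by summing contributions from each bead.

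Step 1: Encode the core by column heights on the $s$-abacus. Since $\gcd(s, ms \pm 1) = 1$, Proposition \ref{And} applies. Any $(s, ms-1, ms+1)$-core is in particular an $s$-core, so its minimal $s$-abacus consists of a prefix of beads in each column: write $h_k$ for the height of column $k$, so column $k$ contains beads at values $k, k+s, \ldots, k+(h_k-1)s$. The $s$-core condition (Lemma \ref{score}) together with $0$ being the first spacer forces $h_0 = 0$.

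Step 2: Translate the $(ms \pm 1)$-core conditions into inequalities on $(h_k)$. A $(ms-1)$-hook corresponds, on the $s$-abacus, to a bead at position $(k, j)$ together with a spacer at position $(k+1, j-m)$, and analogously an $(ms+1)$-hook uses $(k-1, j-m)$. Forbidding all such hooks translates to
\begin{equation*}
h_{k+1} \geq h_k - m \quad (1 \leq k \leq s-2) \qquad \text{and} \qquad h_{k-1} \geq h_k - m \quad (2 \leq k \leq s-1),
\end{equation*}
together with the boundary inequalities $h_1 \leq m$ and $h_{s-1} \leq m-1$, both arising from the fact that column $0$ is empty.

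Step 3: The longest core maximizes every $h_k$. Propagating $h_1 \leq m$ via the second family of inequalities gives $h_k \leq km$, while propagating $h_{s-1} \leq m-1$ via the first gives $h_k \leq (s-k)m - 1$. Setting each $h_k$ equal to the smaller of these two bounds yields the profile of the longest core. For $s = 2t-1$ we get $h_k = km$ for $1 \le k \le t-1$ and $h_k = (s-k)m - 1$ for $t \le k \le s-1$, with $N = (t-1)(mt - 1)$ beads in total. For $s = 2t-2$ we get $h_k = km$ for $1 \le k \le t-2$ and $h_k = (s-k)m - 1$ for $t-1 \le k \le s-1$, with $N = (t-1)(m(t-1) - 1)$. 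In both cases one checks that the profile satisfies the inequalities of Step 2 (the ramp heights differ by exactly $\pm m$ apart from the unique junction between the rising and falling ramps).

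Step 4: Compute the weight via Lemma \ref{onebead}. The part at $s$-abacus position $(k, j)$ equals $k + js$ minus the number of beads strictly below it. Summing over all beads, and using that the number of ordered bead-pairs with $x > x'$ is $\binom{N}{2}$, gives
\begin{equation*}
|\lambda^*| = \sum_{k=1}^{s-1}\sum_{j=0}^{h_k - 1}(k + js) - \binom{N}{2} = \sum_{k=1}^{s-1}\left(k h_k + s\binom{h_k}{2}\right) - \binom{N}{2}.
\end{equation*}
Substituting the explicit heights from Step 3 and simplifying with $\sum_{k=1}^{n} k = \binom{n+1}{2}$ and $\sum_{k=1}^{n} k^2 = \frac{n(n+1)(2n+1)}{6}$ yields the two stated closed forms. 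The main obstacle lies in Step 3: the two linear upper bounds on $h_k$ cross at different integer indices depending on the parity of $s$, so column $t-1$ belongs to the rising ramp when $s$ is odd but to the falling ramp when $s$ is even. This parity-driven shift in a whole stretch of heights is precisely what produces the extra correction term $-\frac{m(t-1)^2}{2}$ in the even case.
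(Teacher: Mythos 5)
Your proposal is correct, but it reaches Theorem \ref{m2} by a genuinely different route than the paper. The paper stays inside its $ms$-abacus framework: it identifies the longest core's abacus as ${\mathcal L}_m(s)={\mathcal E}^-_m(s)\cap{\mathcal E}^+_m(s)=(\wedge_{m-1}{\mathcal C}_0(s))\wedge{\mathcal C}_1(s)$, a wedge of pyramid sub-abaci, proves maximality of length by showing any added bead creates an $(ms-1)$- or $(ms+1)$-hook, and then computes the weight row by row of the $ms$-abacus by counting spacers to the left of each bead in the pyramid profile. You instead work on the $s$-abacus: encode any such core by column heights $h_k$, translate the $(ms\pm1)$-core conditions into the difference constraints $h_{k\pm1}\geq h_k-m$ plus the boundary caps $h_1\leq m$ and $h_{s-1}\leq m-1$ coming from the empty zero runner, observe that the componentwise upper bound $h_k=\min\bigl(km,(s-k)m-1\bigr)$ is itself feasible (so it is the unique longest profile), and evaluate the weight by the clean identity $|\lambda|=\sum_{x\in X}x-\binom{N}{2}$ from Lemma \ref{onebead}. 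Your derivation of the constraints and of the two ramp profiles (including the parity-dependent junction, which is what produces the correction $-\tfrac{m(t-1)^2}{2}$) is sound, and the final formula checks out numerically in both parities (e.g.\ $s=5$, $m=3$ gives $63$ and $s=4$, $m=2$ gives $12$, matching the heights read off Figure 8 in the odd case). What your approach buys is self-containedness: it needs only Lemma \ref{score} and Lemma \ref{onebead}, bypassing Theorem \ref{Emax}, the wedge and intersection calculus, and the pyramid lemmas, and it packages the weight as two short closed-form sums. What the paper's approach buys is structural information: it exhibits the longest core explicitly as the intersection of the abaci of the two maximal $(s,ms\mp1)$-cores, which is the conceptual heart of the paper and what motivates Conjecture \ref{Berger}. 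Two small points of rigor to tighten: the longest core maximizes the total $\sum_k h_k$ (the number of beads of the minimal bead set, i.e.\ the number of parts), so you should say explicitly that feasibility of the componentwise maximum is what makes it the maximizer; and the closing algebraic simplification, which you assert, should be carried out (it is routine but is the only place the two stated polynomials actually appear).
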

We also conjecture that this is the weight of any maximal $(s, ms-1, ms+1)$-core.\\

The key observation we utilize in proving all of our results is the way in which the $ms$-abaci of maximal $(s,ms\pm 1)$-cores are built up from the $s$-abaci of $(s,s\pm1)$-cores and other objects. Hence, we now transition to a detailed description of the relevant $s$-abaci and $ms$-abaci.\\

{\bf Note}: For the remainder of the paper, $(s,ms\pm1)$-core partitions will refer to either a $(s, ms-1)$-core partition {\it or} a $(s,ms+1)$-core partition. The notation $(s,ms-1,ms+1)$-core will indicate a core that is simultaneously a $s$-core, an $(ms-1)$-core, {\it and} an $(ms+1)$-core.
\section{$s$-abaci of $(s,s\pm1)$-cores}
The following two lemmas follow from Definition \ref{index}.
\begin{lemma}\label{s+1core} An $s$-abacus ${\mathcal S}$ is an $(s+1)$-core if $(i,j)\in {\mathcal S}$ implies 
\begin{enumerate}
\item $(i-1,j-1)\in {\mathcal S}$ when $0<i\leq s-1$ and $j\geq 1$, and
\item $(s-1,j-2)\in {\mathcal S}$ when $i=0$ and $j\geq 2$.
\end{enumerate}
\end{lemma}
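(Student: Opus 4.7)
The plan is to translate the general hook-detection rule from Section~1.1 directly into $s$-abacus coordinates. Recall that hooks of length $s+1$ in $\lambda$ correspond bijectively to pairs $(x,y)$ with $x \in X$, $y \notin X$, and $x - y = s+1$. So it suffices to show that the two stated hypotheses together guarantee that whenever $x \in X$ and $y := x-(s+1) \geq 0$, the value $y$ is also a bead; no such pair can then exist, and $\lambda$ is an $(s+1)$-core by definition.

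The key step is to rewrite the candidate partner $y$ in $s$-abacus form. By Definition~\ref{index}, fixing a bead at position $(i,j)$ means $x = i + js$ with $0 \leq i \leq s-1$, so $y = (i-1) + (j-1)s$. If $1 \leq i \leq s-1$, this expression is already in standard form, identifying $y$ with abacus position $(i-1,j-1)$; the constraint $y \geq 0$ is equivalent to $j \geq 1$, and hypothesis~(1) then forces $y \in X$. If instead $i = 0$, regroup as $y = js - s - 1 = (j-2)s + (s-1)$, which identifies $y$ with abacus position $(s-1, j-2)$; the constraint $y \geq 0$ is equivalent to $j \geq 2$, and hypothesis~(2) forces $y \in X$. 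Since these two cases exhaust every bead and every nonnegative partner, no $(s+1)$-hook can occur.

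The only genuinely subtle point is the wrap-around at column $i=0$: subtracting $s+1$ from $x$ moves one column to the left on the abacus, but when $x$ already lies in column $0$ this shift instead lands in column $s-1$ two rows up rather than one, which is why hypothesis~(2) references $(s-1, j-2)$ rather than $(s-1, j-1)$. Beyond carefully tracking this offset and the lower bounds on $j$ needed to keep $y$ nonnegative (and thus to keep it an \emph{actual} position on the abacus rather than a phantom below the $x$-axis), the argument is a mechanical consequence of Definition~\ref{index} and the hook–bead correspondence.
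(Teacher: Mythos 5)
Your proof is correct and follows essentially the same route as the paper: interpret the $(s+1)$-core condition as requiring a bead $s+1$ positions to the left of every bead (when that position is nonnegative), and observe that on the $s$-abacus this shift sends $(i,j)$ to $(i-1,j-1)$ for $i\geq 1$ and wraps to $(s-1,j-2)$ when $i=0$. You simply make explicit, via the computation $x=i+js$ and $y=x-(s+1)$, what the paper states verbally as "wrapping down-and-around the abacus."
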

\begin{proof} Suppose ${\mathcal S}$ is the $s$-abacus of an $(s+1)$-core. Then $(i,j)\in {\mathcal S}$ if and only if there if a bead in a position $s+1$ steps to the left, wrapping down-and-around-to-the-right the abacus when necessary. This is exactly the statement of the lemma.
\end{proof}
\begin{lemma} \label{notcore} An $s$-abacus ${\mathcal S}$ represents an $(s-1)$-core partition if $(s-1,0)\not\in {\mathcal S}$ and $(i,j)\in {\mathcal S}$ with $j>0$ implies 
\begin{enumerate}
\item $(i+1,j-1)\in {\mathcal S}$ when $0< i< s-1$ 
\item $(0,j)\in {\mathcal S}$ when $(s-1,j)$ is.
\end{enumerate}
\end{lemma}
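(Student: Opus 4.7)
The plan is to show directly that under the given hypotheses no $(s-1)$-hook can appear in the partition associated with $\mathcal{S}$. Recalling the discussion before Lemma \ref{score}, the hooks of length $s-1$ correspond bijectively to pairs $(x,y)$ with $x \in X$, $y \notin X$, $y \geq 0$, and $x - y = s-1$. It therefore suffices to check that for every bead of $\mathcal{S}$, the value $y$ obtained by subtracting $s-1$ from its label is either negative, or else is again a bead.

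From Definition \ref{index}, a bead at $s$-abacus position $(i,j)$ has label $x = i + js$, so $y = x - (s-1) = (i+1) + (j-1)s$. I would split into cases according to whether $i+1$ remains a valid column index. When $0 \leq i \leq s-2$, the value $y$ sits naturally at position $(i+1, j-1)$: if $j \geq 1$ then hypothesis (1) forces $(i+1, j-1) \in \mathcal{S}$, and if $j = 0$ then $y = i+1-s < 0$ and no hook is possible. When $i = s-1$, the column wraps around, so $y = js$ corresponds to position $(0,j)$. For $j \geq 1$ this is covered directly by hypothesis (2), while the remaining sub-case $(i,j) = (s-1, 0)$ would produce the pair $(x,y) = (s-1, 0)$ with $0$ a spacer, precisely the pathology forbidden by the standalone hypothesis $(s-1, 0) \notin \mathcal{S}$. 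Since every potential $(s-1)$-hook has been ruled out, $\lambda$ is an $(s-1)$-core.

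The main point of care is the bookkeeping at the wrap-around column $i = s-1$ together with the accompanying degenerate bead at $(s-1, 0)$: neither structural condition (1) nor (2) alone suffices to exclude the small $(s-1)$-hook joining the bead at label $s-1$ with the spacer at label $0$, which is exactly why $(s-1, 0) \notin \mathcal{S}$ is imposed as a separate assumption. Apart from this edge case the argument is a routine mirror of the proof of Lemma \ref{s+1core}, with the positions $(i-1, j-1)$ and $(s-1, j-2)$ replaced throughout by $(i+1, j-1)$ and $(0, j)$, reflecting that subtracting $s-1$ (rather than $s+1$) shifts one column to the right, not to the left, on each row of the $s$-abacus.
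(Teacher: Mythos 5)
Your argument is correct in substance and follows essentially the same route as the paper: the paper's proof of Lemma \ref{notcore} is the one-line remark that the argument of Lemma \ref{s+1core} goes through with $s+1$ replaced by $s-1$, together with the caveat that a bead at $(s-1,0)$ is not permitted, and your write-up is that argument carried out explicitly (bead $x=i+js$, candidate spacer $y=x-(s-1)=(i+1)+(j-1)s$, case split on whether the column wraps). One point deserves attention: in the non-wrapping case you let $i$ range over $0\leq i\leq s-2$ and invoke hypothesis (1), but as printed the lemma restricts (1) to $0<i<s-1$, so beads in column $0$ are not literally covered by any hypothesis. This is a defect of the statement rather than of your reasoning: with the strict inequality the lemma is false (the bead set $\{s\}$, i.e.\ the single bead at position $(0,1)$, satisfies all the printed conditions yet yields the partition $(s)$, which contains an $(s-1)$-hook), and the intended condition---mirroring Lemma \ref{s+1core}, whose clause (1) covers the full range $0<i\leq s-1$ on the other side---is $0\leq i<s-1$. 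Your computation of $y$ at $i=0$ is exactly what shows this clause is needed, so you have in effect silently repaired the boundary; it would be worth saying so explicitly. (In the paper's later applications, e.g.\ to ${\mathcal B}_k(s)$ and ${\mathcal E}^-_m(s)$, column $0$ carries no beads, so the discrepancy is harmless there.)
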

\begin{proof} The argument is identical to that of Lemma \ref{s+1core} replacing $s+1$ by $s-1$, with the caveat that a bead in position $(s-1,0)$ is not permitted.
\end{proof}
A crucial part of our argument below will involve the following two abaci constructions.
\begin{definition}\label{AbS} Let ${\mathcal A}(s)$ be the $s$-abacus with beads in abacus positions $(i,j)$ for every $(i,j)$ such that $0<i\leq s-1$ and $0\leq j \leq i-1$.
\end{definition}
\begin{example}  ${\mathcal A}(5)=\{(1,0),(2,0),(2,1),(3,0),(3,1),(3,2),(4,0),(4,1),(4,2),(4,3)\}.$ [See Figure 1.]
\end{example}

{\scriptsize
\begin{figure}[h!]
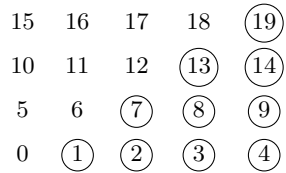

\label{abacus789}
\begin{center}
\[
\begin{array}{ccccc}
15 & 16 & 17 & 18 & \encircle{19}\\
10 & 11 & 12 & \encircle{13} & \encircle{14}\\
{5} & 6 & \encircle{7} & \encircle{8} & \encircle{9}\\
0 & \encircle{1} & \encircle{2} & \encircle{3}& \encircle{4}\\
\end{array} 
\] 

\caption{${\mathcal A(5)}$}

\end{center}
\end{figure}
}

\begin{lemma} \label{topS} ${\mathcal A}(s)$ is the minimal $s$-abacus of $\kappa_{s,s+1}$, the maximal $(s,s+1)$-core.
\end{lemma}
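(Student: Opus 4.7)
The plan is to verify that $\mathcal{A}(s)$ encodes an $(s,s+1)$-core whose Young diagram contains a hook of the maximal possible length $s(s+1)-s-(s+1)=s^2-s-1$, and then invoke Corollary~\ref{bighook} to conclude that it must be $\kappa_{s,s+1}$. This avoids any direct computation of the weight and reduces the problem to two index-chases and one hook identification.

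First, I would observe that $\mathcal{A}(s)$ is genuinely a \emph{minimal} abacus: Definition~\ref{AbS} requires $i\geq 1$ for every bead, so position $(0,0)$ (value $0$) is a spacer, and in fact $0$ labels the first spacer. To confirm that $\mathcal{A}(s)$ represents an $s$-core, I apply Lemma~\ref{score}: if $(i,j)\in\mathcal{A}(s)$ with $j\geq 1$, then $0\leq j-1\leq i-1$, so $(i,j-1)\in\mathcal{A}(s)$. For the $(s+1)$-core condition of Lemma~\ref{s+1core}, case~(1) follows because $(i,j)\in\mathcal{A}(s)$ with $j\geq 1$ forces $i\geq 2$ (since $j\leq i-1$), whence $0<i-1\leq s-1$ and $j-1\leq i-2=(i-1)-1$, giving $(i-1,j-1)\in\mathcal{A}(s)$; case~(2) is vacuous since $\mathcal{A}(s)$ has no bead with $i=0$.

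Finally, the largest bead in $\mathcal{A}(s)$ sits in position $(s-1,s-2)$ and has value $(s-1)+(s-2)s=s^2-s-1$. Paired with the spacer at $0$, the bead--spacer correspondence recalled just before Lemma~\ref{onebead} produces a hook of length $s^2-s-1$ in the associated partition. By Proposition~\ref{sly} this is the largest hook any $(s,s+1)$-core can possess, and by Corollary~\ref{bighook} such a hook identifies the partition uniquely as $\kappa_{s,s+1}$. The only place where care is required --- and the main (mild) obstacle --- is the index-range check in the $(s+1)$-core verification, where one must ensure $i-1$ does not drop to $0$; the implication $j\geq 1\Rightarrow i\geq 2$ built into Definition~\ref{AbS} handles this cleanly.
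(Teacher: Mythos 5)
Your proof is correct and follows essentially the same route as the paper's: establish minimality, verify the $s$- and $(s+1)$-core conditions through Lemmas \ref{score} and \ref{s+1core}, and then locate the bead in position $(s-1,s-2)$ with value $s^2-s-1$ so that Corollary \ref{bighook} identifies the partition as $\kappa_{s,s+1}$. Your index-range checks (the vacuity of the $i=0$ case and the implication $j\geq 1\Rightarrow i\geq 2$) are slightly more explicit than the paper's, and your computation $(s-1)+(s-2)s=s^2-s-1$ is the correct value (the paper's proof prints $s^2+s-1$, a typo, before quoting the right quantity $s(s+1)-s-(s+1)=s^2-s-1$).
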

\begin{proof} 
${\mathcal A}(s)$ is minimal by construction. To show ${\mathcal A}(s)$ is the $s$-abacus of an $(s,s+1)$-core we have to show that Lemma \ref{score} and Lemma \ref{s+1core} are satisfied.
Suppose $j>0$. If $(i,j)\in {\mathcal A}(s)$ then, since $j\leq i-1$, it follows by Definition \ref{AbS} that when $1\leq i\leq s-1$, we have  $(i,j-1)\in {\mathcal A}(s)$ and $(i-1,j-1)\in {\mathcal A}(s)$.

Let $X$ be the underlying bead-set of ${\mathcal A}(s)$. Since $(s-1,s-2)\in {\mathcal A}(s)$, by Definition \ref{index} we have $s-1+(s-2)s=s^2+s-1\in X$. By Corollary \ref{bighook} this implies that ${\mathcal A}(s)$ is the $s$-abacus of the maximal $(s,s+1)$-core, since $s(s+1)-s-(s+1)=s^2-s-1.$
\end{proof}
\begin{definition} \label{bee} Let ${\mathcal B}_k(s)$ be the $s$-abacus with beads in abacus positions $(i,j)$ for every $(i,j)$ such that $0<i\leq s-1-k$ and $0\leq j\leq s-i-1.$
\end{definition}
The proofs of Lemmas \ref{yikes} and \ref{tippy} follow from Definition \ref{bee}. Details are left to the reader.
\begin{lemma} \label{yikes} ${\mathcal B}_k(s)$ has the following properties. If $(i,j)\in {\mathcal B}_k(s)$ then
\begin{enumerate}
\item $(i,j-1)\in {\mathcal B}_k(s)$ and
\item $(i+1,j-1)\in {\mathcal B}_k(s)$. 
\end{enumerate}
\end{lemma}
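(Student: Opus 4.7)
The plan is to verify both parts by direct application of the defining inequalities in Definition \ref{bee}. A position $(i, j)$ belongs to $\mathcal{B}_k(s)$ precisely when $0 < i \leq s-1-k$ and $0 \leq j \leq s-i-1$, and each claim reduces to checking that the shifted coordinates still satisfy this same system of inequalities.

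For part (1), I would assume $(i, j) \in \mathcal{B}_k(s)$ with $j \geq 1$ (the case $j = 0$ being vacuous, since $j-1 < 0$ is not a row of the abacus, consistent with the convention of Lemma \ref{score}). The column index is unchanged, so $0 < i \leq s-1-k$ still holds, and the chain $0 \leq j-1 \leq s-i-2 < s-i-1$ certifies that $(i, j-1) \in \mathcal{B}_k(s)$. For part (2), I would take $(i, j) \in \mathcal{B}_k(s)$ with $j \geq 1$ and $i + 1 \leq s-1-k$ (the natural range-of-validity restriction, paralleling the explicit $0 < i < s-1$ caveat in Lemma \ref{notcore}). Then $0 < i+1 \leq s-1-k$ is immediate, and the bound $1 \leq j \leq s-i-1$ gives $0 \leq j-1 \leq s-(i+1)-1$, so that $(i+1, j-1) \in \mathcal{B}_k(s)$.

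The verification is entirely formal, which is why the authors leave it to the reader. The only subtlety I would flag is the boundary column $i = s-1-k$: there, the would-be neighbor $(s-k, j-1)$ simply lies outside the column range of $\mathcal{B}_k(s)$, so part (2) is vacuous in any downstream application. I do not anticipate a genuine obstacle in the argument; the main point is to be explicit about the implicit positivity and column-range conditions so that the two inclusions become bookkeeping on the staircase shape $\{(i,j) : 0 < i \leq s-1-k,\ 0 \leq j \leq s-i-1\}$.
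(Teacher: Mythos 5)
Your proof takes the same route as the paper, which simply declares the lemma to follow from Definition \ref{bee} and leaves the check to the reader; your verification of the inequalities is exactly that check, and part (1) is complete as written. The one point worth tightening is the extra hypothesis $i+1\leq s-1-k$ you impose in part (2). Note that the row bound printed in Definition \ref{bee} ($0\leq j\leq s-i-1$, with no $k$) is inconsistent with the paper's own Example \ref{b15}, Lemma \ref{tippy}, and the proof of Lemma \ref{topB}, all of which force the intended bound $0\leq j\leq s-i-1-k$. Under that intended bound your restriction is not an additional hypothesis at all: $(i,j)\in{\mathcal B}_k(s)$ with $j\geq 1$ gives $1\leq s-i-1-k$, i.e.\ $i+1\leq s-1-k$ automatically, so you should derive it rather than assume it, and then part (2) holds exactly as stated (with the usual convention that $j=0$ is vacuous, as in Lemma \ref{score}). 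Under the bound as literally printed, the boundary column $i=s-1-k$ really does carry beads with $j\geq 1$ when $k\geq 1$ and part (2) would fail there, so your flagged ``subtlety'' is in fact evidence of a typo in Definition \ref{bee} rather than a genuine caveat needed for the lemma.
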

\begin{lemma} \label{tippy} ${\mathcal B}_{1}(s)$ is obtained from ${\mathcal B}_{0}(s)$ by removing beads in abacus-positions $(i,s-1-i)$ as $1\leq i\leq s-1.$
\end{lemma}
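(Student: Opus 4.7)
My plan is to prove this by directly comparing the defining conditions for $\mathcal{B}_0(s)$ and $\mathcal{B}_1(s)$ given in Definition \ref{bee}. I would write out the two bead sets explicitly, then compute the set-theoretic difference $\mathcal{B}_0(s) \setminus \mathcal{B}_1(s)$ and verify that the positions comprising this difference are exactly those of the form $(i, s-1-i)$ for $1 \leq i \leq s-1$.

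Geometrically, $\mathcal{B}_0(s)$ occupies a right-triangular region of the $s$-abacus whose outermost boundary is the anti-diagonal $i + j = s - 1$. Incrementing $k$ from $0$ to $1$ shrinks this triangular region by one layer, so the bead positions that disappear should be exactly those on the outermost anti-diagonal. These positions are parameterized by $(i, s-1-i)$ as $i$ ranges over $\{1, 2, \ldots, s-1\}$, which matches the claim. An alternative phrasing is to note that a bead $(i,j) \in \mathcal{B}_0(s)$ fails to lie in $\mathcal{B}_1(s)$ precisely when it saturates the bound constraint, i.e., when $j = s-i-1$ for $i$ in the allowable range, which is the anti-diagonal parametrization directly.

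Since this is essentially unpacking the definition, the proof is mechanical and short. The main obstacle is just avoiding off-by-one errors when juggling the bounds $0 < i \leq s-1-k$ and $0 \leq j \leq s-i-1$; I would guard against this by sanity-checking the small case $s=5$, where the removed positions should be $(1,3), (2,2), (3,1), (4,0)$ — the outermost anti-diagonal of the $\binom{5}{2}=10$-bead triangle $\mathcal{B}_0(5)$. If this small case checks out against the general formula, the full verification follows at once.
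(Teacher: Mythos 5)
Your approach is the same as the paper's: Lemma \ref{tippy} is stated there with the proof left to the reader as a direct consequence of Definition \ref{bee}, and unpacking the two bead sets and taking the set difference is exactly that argument; your $s=5$ check agrees with Examples \ref{b05} and \ref{b15}. One caution, though: as printed, Definition \ref{bee} only tightens the bound on $i$ (to $0<i\leq s-1-k$) and leaves the $j$-bound $0\leq j\leq s-i-1$ unchanged, so a literal reading would make ${\mathcal B}_1(5)$ a nine-bead set and would make the difference ${\mathcal B}_0(s)\setminus {\mathcal B}_1(s)$ the single bead $(s-1,0)$ rather than the anti-diagonal. Your ``saturation'' criterion $j=s-i-1$ is therefore not what the printed definition gives; it is correct only under the evidently intended reading $0\leq j\leq s-1-k-i$, which is what Example \ref{b15}, Figure 3, and the proof of Lemma \ref{topB} (where the top bead of runner $1$ in ${\mathcal B}_1(s)$ is $(1,s-3)$) all presuppose. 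So before computing the difference you should state that reading explicitly (or note the discrepancy in the definition); with it, your mechanical comparison does yield precisely the removed layer $(i,s-1-i)$ for $1\leq i\leq s-1$, and your $s=5$ sanity check then genuinely certifies the bookkeeping rather than masking the mismatch with the printed bounds.
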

\begin{example} \label{b05} ${\mathcal B}_0(5)=\{(1,0),(1,1),(1,2),(1,3),(2,0),(2,1),(2,2),(3,0),(3,1),(4,0)\}$. [See Figure 2.]
\end{example}

{\scriptsize
\begin{figure}[h!]
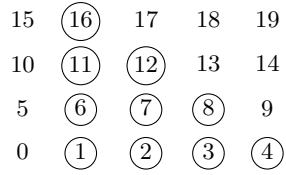

\label{abacus91011}
\begin{center}

\[
\begin{array}{ccccc}
15 & \encircle{16} & 17 & 18 & 19\\
10& \encircle{11} & \encircle{12}& 13 & 14\\
5& \encircle{6} & \encircle{7} & \encircle{8} & 9 \\
0 & \encircle{1} & \encircle{2} & \encircle{3} & \encircle{4}\\
\end{array} 
\] 

\caption{${\mathcal B_0(5)}$: $5$-abacus of a $(5,9)$-core}

\end{center}
\end{figure}
} 
\begin{example} \label{b15} ${\mathcal B}_1(5)=\{(1,0),(1,1),(1,2),(2,0),(2,1),(3,0)\}$. [See Figure 3.]
\end{example}
{\scriptsize
\begin{figure}[h!]
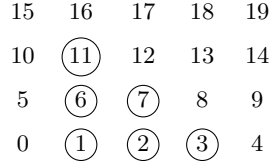

\label{abacus91011}
\begin{center}

\[
\begin{array}{ccccc}
15 & 16 & 17 & 18 & 19\\
10& \encircle{11} & 12 & 13 & 14\\
5& \encircle{6} & \encircle{7} & 8 & 9 \\
0 & \encircle{1} & \encircle{2} & \encircle{3} & 4\\
\end{array} 
\] 

\caption{${\mathcal B_1(5)}$: $5$-abacus of the maximal $(4,5)$-core}

\end{center}
\end{figure}
} 
For the remainder of the paper, we focus our attention on ${\mathcal B}_0(s)$ and ${\mathcal B}_1(s)$.
\begin{lemma} \label{topB} $\mathcal{B}_1(s)$ is the minimal $s$-abacus of the maximal $(s-1,s)$-core partition.
\end{lemma}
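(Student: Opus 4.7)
The plan is to mirror the proof of Lemma \ref{topS}, proceeding in three steps: verify minimality of $\mathcal{B}_1(s)$, check that it is an $(s-1,s)$-core via Lemmas \ref{score} and \ref{notcore}, and then identify it as the \emph{maximal} such core via Corollary \ref{bighook}.

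First, $\mathcal{B}_1(s)$ is minimal by construction, since Definition \ref{bee} requires $i > 0$, so position $0$ is a spacer. Second, the $s$-core condition of Lemma \ref{score} is supplied directly by Lemma \ref{yikes}(1), which gives $(i, j-1) \in \mathcal{B}_1(s)$ whenever $(i, j) \in \mathcal{B}_1(s)$ and $j > 0$.

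For the $(s-1)$-core condition of Lemma \ref{notcore}, I will verify three items: (a) $(s-1, 0) \notin \mathcal{B}_1(s)$, which holds since the range of $i$ is $0 < i \leq s-2$; (b) for $(i, j) \in \mathcal{B}_1(s)$ with $0 < i < s-1$ and $j > 0$, the combination of Definition \ref{bee} and Lemma \ref{tippy} forces $i \leq s-3$ (otherwise $j$ would be forced to $0$), so Lemma \ref{yikes}(2) then yields $(i+1, j-1) \in \mathcal{B}_1(s)$; and (c) the wraparound clause is vacuous because no bead of $\mathcal{B}_1(s)$ lies on the $(s-1)$-runner. Thus $\mathcal{B}_1(s)$ is the $s$-abacus of some $(s-1, s)$-core.

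For maximality, I will locate the largest-valued bead of $\mathcal{B}_1(s)$ by maximizing $i + js$ over the staircase region; since the coefficient of $i$ in $i + (s-i-2)s = s^2-2s+i(1-s)$ is negative, the maximum occurs at $(1, s-3)$ with value $s^2 - 3s + 1$. Paired with the spacer at position $0$, this bead realizes a hook of length $s^2 - 3s + 1 = s(s-1) - s - (s-1)$ in the associated partition. By Proposition \ref{sly} this matches Sylvester's maximum hook length for any $(s-1, s)$-core, so Corollary \ref{bighook} identifies $\mathcal{B}_1(s)$ as the minimal $s$-abacus of $\kappa_{s-1, s}$. The only point requiring care will be the boundary case in (b) above: a naive application of Lemma \ref{yikes}(2) at $i = s-2$ would try to place a bead at $(s-1, j-1)$, which is forbidden; the resolution is that the ``staircase minus diagonal'' shape of $\mathcal{B}_1(s)$ recorded by Lemma \ref{tippy} rules out beads $(s-2, j)$ with $j > 0$ in the first place. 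Once that is dispatched, the remainder is arithmetic.
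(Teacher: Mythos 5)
Your proposal is correct and follows essentially the same route as the paper: minimality by construction, the $(s-1,s)$-core conditions via Lemmas \ref{score}, \ref{notcore} and \ref{yikes}, and maximality from the bead $(1,s-3)$ of value $1+(s-3)s=s(s-1)-s-(s-1)$ together with Corollary \ref{bighook}. You simply supply more detail than the paper does (the boundary case $i=s-2$, the vacuous wraparound clause, and the explicit maximization of $i+js$), which is harmless but not a different argument.
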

\begin{proof}  ${\mathcal B}_1(s)$ is minimal by construction. By Lemma \ref{score}, Lemma \ref{notcore} and Lemma \ref{yikes}, ${\mathcal B}_1(k)$ is a $(s-1,s)$-core. 

Let $X$ be the underlying bead-set of ${\mathcal B}_1(s)$. Since $(1,s-3)\in {\mathcal B}_1(s)$, $1+(s-3)s=(s-1)s-s-(s-1)\in X.$ By Corollary \ref{bighook}, we are done.
\end{proof}
\begin{corollary} \label{subb} ${\mathcal B}_{1}(s)$ is a sub-abacus of ${\mathcal B}_{0}(s).$
\end{corollary}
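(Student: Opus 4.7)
The proof will be essentially immediate from Lemma \ref{tippy}. My plan is to invoke that lemma directly: it explicitly describes $\mathcal{B}_1(s)$ as the abacus obtained from $\mathcal{B}_0(s)$ by deleting the beads at the diagonal positions $(i, s-1-i)$ for $1 \leq i \leq s-1$. Since sub-abacus (in the sense of Definition of sub-abacus earlier in the paper) is defined as containment of sets of abacus positions, deletion yields containment, and we are done.

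Alternatively, if one wishes to avoid relying on Lemma \ref{tippy} and argue straight from Definition \ref{bee}, the strategy is to compare the defining index ranges for $k=0$ and $k=1$: every position $(i,j)$ satisfying $0 < i \leq s-2$ and $0 \leq j$ within the allowed range for $\mathcal{B}_1(s)$ also satisfies $0 < i \leq s-1$ and lies in the allowed range for $\mathcal{B}_0(s)$ (since the bound on $j$ for $\mathcal{B}_1(s)$ is at most the bound for $\mathcal{B}_0(s)$ at the same $i$). Thus every abacus position in $\mathcal{B}_1(s)$ lies in $\mathcal{B}_0(s)$, which is the condition of Definition of sub-abacus.

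There is no genuine obstacle here; the corollary is essentially a restatement of Lemma \ref{tippy} in the language of sub-abaci. The one-line proof I would write is: by Lemma \ref{tippy}, the set of beads of $\mathcal{B}_1(s)$ is a subset of the set of beads of $\mathcal{B}_0(s)$, so $\mathcal{B}_1(s)$ is a sub-abacus of $\mathcal{B}_0(s)$ by definition.
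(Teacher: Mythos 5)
Your proposal is correct and matches the paper's proof exactly: the paper also disposes of this corollary with the single line that it follows from Lemma \ref{tippy}, since removing beads from ${\mathcal B}_0(s)$ immediately gives containment of abacus positions. Your alternative argument directly from Definition \ref{bee} is a fine (and equally trivial) fallback, but it is not needed.
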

\begin{proof} Follows from Lemma \ref{tippy}.
\end{proof}
\begin{example} ${\mathcal B}_1(5)$ is a sub-abacus of ${\mathcal B}_0(5).$ [See Figures 2 and 3.]
\end{example}
\section{$ms$-abaci of $(s,ms\pm1)$-cores}
We now generalize the results of the previous section by moving to $(s,ms\pm1)$-cores.
\begin{lemma}\label{smscore} Let ${\mathcal M}$ be an $ms$-abacus, where $m>1$. Then ${\mathcal M}$ corresponds to an $s$-core partition if 
\begin{enumerate}
\item $(i,j)\in {\mathcal M}$ then $(i-s,j)$ when $s\leq i\leq ms-1$
\item $(i,j)\in {\mathcal M}$ then $(s-i-1,j-1)\in {\mathcal M}$ if $0\leq i<s.$
\end{enumerate} 
\end{lemma}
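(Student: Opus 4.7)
My plan is to unpack the $s$-hook criterion in $ms$-abacus coordinates and read off conditions (1) and (2) as the two cases of a row-borrow computation, in the same spirit as Lemma \ref{score}.

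I will start from the characterization recalled just before Lemma \ref{onebead}: the $s$-hooks of $\lambda$ are in bijection with pairs $(x,y)$ with $x \in X$, $y \notin X$, $y \geq 0$, and $x - y = s$. Hence $\mathcal{M}$ represents an $s$-core if and only if, for every bead $x \in X$ with $x \geq s$, the integer $x - s$ is also a bead. Using Definition \ref{index} extended to the $ms$-abacus, every bead has the form $x = i + j \cdot ms$ with $0 \leq i \leq ms-1$ and $j \geq 0$, and
\[
x - s \;=\; (i-s) + j \cdot ms.
\]

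The argument now splits into two cases based on the sign of $i-s$. If $s \leq i \leq ms-1$, the expression $(i-s) + j\cdot ms$ is already in $ms$-abacus normal form, so $x-s$ sits at position $(i-s,\,j)$; requiring it to carry a bead is exactly condition (1). If $0 \leq i < s$, then one borrows a row, rewriting
\[
x - s \;=\; \bigl(i + (m-1)s\bigr) + (j-1)\cdot ms,
\]
so that $x-s$ lives at position $\bigl(i+(m-1)s,\,j-1\bigr)$; for $j \geq 1$ this position must carry a bead, which is the content of condition (2) (I read the indexing in the statement accordingly). When $j=0$ and $i<s$ we have $x-s<0$, so there is no spacer to forbid and no constraint is needed there.

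Conditions (1) and (2) together therefore rule out every potential $s$-hook in $\mathcal{M}$, proving the lemma. The only delicate point is the row-borrow bookkeeping that distinguishes the ``same row'' case $i\geq s$ from the ``wrap-around'' case $i<s$; this is the unique place in the argument where the gap between $s$ and $ms$ enters, and the two-case split above handles it cleanly, so I do not anticipate any real obstacle beyond verifying that the wrap-around target matches the position stated in (2).
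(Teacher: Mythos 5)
Your proof is correct and follows essentially the same route as the paper's (much terser) argument: condition (1) handles the same-row case and condition (2) the wrap-around case of the $s$-hook criterion on the $ms$-abacus. Your explicit bookkeeping also correctly identifies the wrap-around target as $(i+(m-1)s,\,j-1)$; the coordinates $(s-i-1,\,j-1)$ printed in the statement appear to be a typo, since the paper itself uses the position $(i+(m-1)s,\,j-1)$ when it invokes this lemma in the proof of Theorem \ref{Emax}, so your reading of the indexing is the intended one.
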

\begin{proof} Part (1) is immediate. Part (2) ensures that when moving $s$ positions to the left of $(i,j)$ wraps around-and-down the $ms$-abacus, a bead occupies the relevant abacus position.
\end{proof}
The next corollary follows from Corollary $\ref{mscore}$ and the definition of an $ms$-abacus.
\begin{corollary} \label{smcore} Let ${\mathcal M}$ be an $ms$-abacus. If ${\mathcal M}$ is an $s$-core, then, if $(i,j)\in {\mathcal M}$, we have $(i,j-1)\in {\mathcal M}.$
\end{corollary}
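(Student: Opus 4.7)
The plan is a very short two-step chain using results already in the excerpt. Let $\lambda$ be the partition whose $ms$-abacus is $\mathcal{M}$. By hypothesis $\lambda$ is an $s$-core, so Corollary \ref{mscore} (which states that an $s$-core is automatically an $ms$-core for every $m>1$) gives that $\lambda$ is also an $ms$-core. Now apply Lemma \ref{score}, but with the integer ``$s$'' of that lemma replaced by $ms$: since $\mathcal{M}$ is the $ms$-abacus of the $ms$-core $\lambda$, the lemma tells us directly that $(i,j)\in\mathcal{M}$ with $j>0$ forces $(i,j-1)\in\mathcal{M}$, which is exactly the conclusion of the corollary.

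The only minor point of care is the implicit assumption $j\geq 1$, since otherwise $(i,j-1)$ is not a legal abacus position; the statement should be read with this caveat (and the hint in the excerpt that the result follows from Corollary \ref{mscore} and ``the definition of an $ms$-abacus'' amounts to noting that Lemma \ref{score} is formulated for an arbitrary positive integer, so it applies verbatim to an $ms$-abacus). There is no genuine obstacle: the whole content of the corollary is the observation that the ``no spacer beneath a bead on a runner'' characterization of being an $s$-core lifts to the finer $ms$-abacus once one knows the partition is also an $ms$-core. Consequently the proof will be a single short paragraph, with no case analysis or computation required.
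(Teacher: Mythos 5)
Your proof is correct and matches the paper's intent: the paper simply remarks that the corollary ``follows from Corollary \ref{mscore} and the definition of an $ms$-abacus,'' which is precisely your chain of first upgrading the $s$-core to an $ms$-core via Corollary \ref{mscore} and then invoking the abacus characterization of Lemma \ref{score} with $ms$ in place of $s$. Your caveat about $j\geq 1$ is a sensible reading of the statement and does not change the argument.
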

\begin{definition}  We define the following two special $ms$-abaci.
\begin{enumerate}
\item Let ${\mathcal E_m^-}(s)$ be the $ms$-abacus with beads in abacus positions $(i+\ell s,j),$ where $0\leq \ell\leq m-2$ for $1\leq i\leq s-1$ and $1\leq j\leq s-i-1$ and $(i+(s-1)m,s-i-2)$ for $1\leq i\leq s-2$ and $0\leq j\leq s-i-2$.
\item Let ${\mathcal E_m^+}(s)$ be the $ms$-abacus defined by $(i+\ell s,j)$ where $1\leq i\leq s-1$ and $0\leq j\leq i-1$ and $0\leq \ell \leq m-1$.
\end{enumerate}
\end{definition}
\begin{example} Let ${\mathcal E}^-(5,\ell)=\cup_{0\leq i\leq 4}\cup_{0\leq j\leq 4-i}(i+5\ell,j)$. Then
$${\mathcal E}^-_3(5)=\cup_{0\leq \ell \leq 2}{\mathcal E^-(5,\ell)}.$$ [See Figure 4.]
\end{example}
{\scriptsize
\begin{figure}[h!] \label{e5minus}
\label{abacus789}
\begin{center}

\[
\begin{array}{ccccccccccccccc}
{45} & \encircle{46} & 47 & 48 & 49 & 50 & \encircle{51} & 52 & 53 & 54 & 55 & 56 & 57 & 58 & 59\\
{30} & \encircle{31} & \encircle{32} & 33 & 34 & 35 & \encircle{36} & \encircle{37} & 38 & 39 & 40 & \encircle{41} & 42 & 43 & 44\\
15 & \encircle{16} & \encircle{17} & \encircle{18} & 19 & 20 & \encircle{21} & \encircle{22} & \encircle{23} & 24 & 25 & \encircle{26} & \encircle{27} & 28 & 29\\
0 & \encircle{1} & \encircle{2} & \encircle{3} & \encircle{4} & 5 & \encircle{6} & \encircle{7} & \encircle{8} & \encircle{9} & 10 & \encircle{11} & \encircle{12} & \encircle{13} & 14\\
\end{array} 
\] 

\caption{${\mathcal E_3^-}(5)$: $15$-abacus of the maximal $(5,14)$-core}

\end{center}
\end{figure}
} 
\begin{example} Let ${\mathcal E}^+(5,\ell)=\cup_{0\leq i\leq 4}\cup_{0\leq j\leq i-1}(i+5\ell,j)$. Then
$${\mathcal E}^+_3(5)=\cup_{0\leq \ell \leq 2}{\mathcal E^+(5,\ell)}.$$ [See Figure 5.]
\end{example}
{\scriptsize
\begin{figure}[h!]
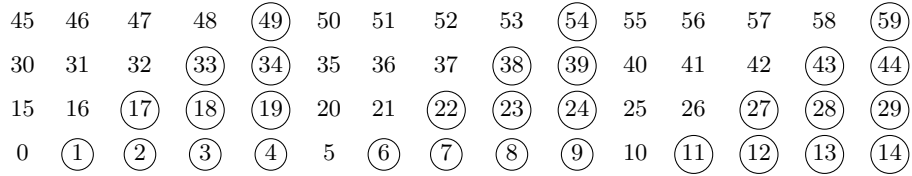
 \label{e5plus}
\label{abacus789}
\begin{center}

\[
\begin{array}{ccccccccccccccc}
45 & 46 & 47 & 48 & \encircle{49} & 50 & 51 & 52 & 53 & \encircle{54} & 55 & 56 & 57 & 58 & \encircle{59}\\
30 & 31 & 32 & \encircle{33} & \encircle{34} & 35 & 36 & 37 & \encircle{38} & \encircle{39} & 40 & 41 & 42 & \encircle{43} & \encircle{44}\\
15 & 16 & \encircle{17} & \encircle{18} & \encircle{19} & 20 & 21 & \encircle{22} & \encircle{23} & \encircle{24} & 25 & 26 & \encircle{27} & \encircle{28} & \encircle{29}\\
0 & \encircle{1} & \encircle{2} & \encircle{3} & \encircle{4} & 5 & \encircle{6} & \encircle{7} & \encircle{8} & \encircle{9} & 10 & \encircle{11} & \encircle{12} & \encircle{13} & \encircle{14}\\
\end{array} 
\] 

\caption{${\mathcal E_3^+}(5)$: $15$-abacus of the maximal $(5, 16)$-core}

\end{center}
\end{figure}
} 
\begin{theorem} \label{Emax} The following relations hold for ${\mathcal E_m^-}(s)$ and ${\mathcal E_m^+}(s)$.
\begin{enumerate}
\item ${\mathcal E^-_m}(s)$ is the minimal $ms$-abacus of the maximal $(s, ms-1)$-core partition.
\item ${\mathcal E_m^+}(s)$ is the minimal $ms$-abacus of the maximal $(s,ms+1)$-core partition.
\end{enumerate}
\end{theorem}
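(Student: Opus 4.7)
The plan is to establish both parts of Theorem \ref{Emax} by the same four-step template used in Lemmas \ref{topS} and \ref{topB}: verify that each abacus is minimal, satisfies the $s$-core condition of Lemma \ref{smscore}, satisfies the $(ms\pm 1)$-core condition, and contains a hook whose length attains the Sylvester bound from Proposition \ref{sly}, so that Corollary \ref{bighook} identifies it as maximal. Minimality in both cases is immediate because the smallest bead in each abacus has value $1$, so position $0$ remains a spacer.

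For the $s$-core condition I apply Lemma \ref{smscore}: moving $s$ leftward on the $ms$-abacus sends $(c,j)$ with $c \geq s$ to $(c-s,j)$ and sends $(c,j)$ with $c < s$ and $j \geq 1$ to $(c+(m-1)s,\, j-1)$. For $\mathcal{E}^+_m(s)$ the non-wrap step moves a bead in the $\ell$-th $\mathcal{A}(s)$-block to the corresponding bead of the $(\ell-1)$-th block, and the wrap step carries the $\ell = 0$ block into the $\ell = m-1$ block one row lower; both checks are immediate since every block has the same $\mathcal{A}(s)$-shape. For $\mathcal{E}^-_m(s)$ the same mechanism applies, but the wrap must carry the $\mathcal{B}_0(s)$-shape of the $\ell = 0$ block down into the shorter $\mathcal{B}_1(s)$-shape at the final block; the defining inequalities show that the image inequalities $j \leq s-i-2$ are exactly what is needed. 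The $(ms\pm 1)$-core checks are similar: for $\mathcal{E}^+_m(s)$ a shift by $ms+1$ takes $(i+\ell s,\, j)$ to $(i-1+\ell s,\, j-1)$, which lies in $\mathcal{E}^+_m(s)$ whenever $i \geq 2$ (and $i=1$ forces $j=0$, so no check is required), while for $\mathcal{E}^-_m(s)$ a shift by $ms-1$ takes $(i+\ell s,\, j)$ to $(i+1+\ell s,\, j-1)$, handled by a case split on whether the bead is in an $\mathcal{B}_0$-block ($\ell \leq m-2$) or the final $\mathcal{B}_1$-block ($\ell = m-1$).

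For maximality, the minimal bead-set convention makes $0$ a spacer, so the largest first-column hook length equals the largest bead value. In $\mathcal{E}^+_m(s)$ this maximum is attained at $(ms-1,\, s-2)$ with value $(ms-1)+(s-2)ms = s(ms+1)-s-(ms+1)$; in $\mathcal{E}^-_m(s)$ it is attained at $(1+(m-2)s,\, s-2)$, inside the last $\mathcal{B}_0$-block, with value $1+(m-2)s+(s-2)ms = s(ms-1)-s-(ms-1)$. Corollary \ref{bighook} then identifies each abacus as the maximal $(s, ms\pm 1)$-core. The main obstacle is the bookkeeping for part (1): one must track the two distinct block-shapes in $\mathcal{E}^-_m(s)$, verify that the antidiagonal of positions absent from the $\mathcal{B}_1$-block is never required as a target of an $s$-step or $(ms-1)$-step shift, and be careful to locate the maximal hook in the last $\mathcal{B}_0$-block at $\ell = m-2$ rather than in the truncated final block at $\ell = m-1$, where the shorter shape no longer supports a bead in row $s-2$.
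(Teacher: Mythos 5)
Your proposal is correct and takes essentially the same approach as the paper: it verifies minimality, checks the $s$-core and $(ms\pm 1)$-core conditions blockwise on the $\mathcal{A}(s)$, $\mathcal{B}_0(s)$, $\mathcal{B}_1(s)$ constituents (including the wrap of the $\ell=0$ block into the truncated final block), and then invokes Corollary \ref{bighook} at the same hook positions the paper uses, namely $((m-2)s+1,\,s-2)$ for $\mathcal{E}^-_m(s)$ and $(ms-1,\,s-2)$ for $\mathcal{E}^+_m(s)$. The only cosmetic difference is that the paper packages the block structure through Lemma \ref{BAM} and the projections $\pi_\ell$, while you argue directly from the defining inequalities.
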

Before we can prove Theorem \ref{Emax}, we need to define a new operation on abaci.
\begin{definition} \label{Oplus} Suppose ${\mathcal A}$ and ${\mathcal B}$ are $s$-abaci and $t$-abaci respectively.  We denote by ${\mathcal A}\wedge {\mathcal B}$ the $(s+t)$-abacus whose $0\leq i\leq s-1$ runners correspond to the $s-1$ runners of ${\mathcal A}$, and whose $s\leq i\leq s+t-1$ runners of correspond to the $t-1$ runners of ${\mathcal B}$. This will be called {\bf appending} $\mathcal{B}$ to $\mathcal{A}$ on the right.
When we append ${\mathcal A}$ to itself $m$ times, we will use the notation $\wedge_m{\mathcal A}=\underbrace{{\mathcal A}\wedge\cdots\wedge{\mathcal A}}_{\text{m}}$.
\end{definition}
\begin{lemma} \label{BAM} The following relations hold.
\begin{enumerate}
\item ${\mathcal E_m^-}(s)=(\wedge_{m-1}{\mathcal B}_0(s))\wedge{\mathcal B}_1(s)$\\
\item ${\mathcal E_m^+}(s)=\wedge_m{\mathcal A}(s)$
\end{enumerate}
\end{lemma}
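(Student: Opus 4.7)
The plan is to verify both identities by direct comparison of the underlying sets of bead positions; since the objects on each side are defined as explicit lists of positions on an $ms$-abacus, the whole argument reduces to unpacking definitions. The key preliminary step is to make precise the action of $\wedge$ on positions. From Definition \ref{Oplus}, if ${\mathcal A}$ is an $s$-abacus and ${\mathcal B}$ is a $t$-abacus, then a bead at $(i,j)\in{\mathcal A}$ contributes a bead at position $(i,j)$ of the $(s+t)$-abacus ${\mathcal A}\wedge {\mathcal B}$, while a bead at $(i,j)\in{\mathcal B}$ contributes a bead at position $(i+s,j)$, because appending $\mathcal{B}$ shifts every runner index of $\mathcal{B}$ by $s$ while leaving row indices unchanged. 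Iterating this observation, the bead set of $\wedge_m{\mathcal A}$ is
\[
\bigcup_{\ell=0}^{m-1}\bigl\{(i+\ell s,j):(i,j)\in{\mathcal A}\bigr\}.
\]

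For part (2) I would then just plug Definition \ref{AbS} into the displayed formula above: since ${\mathcal A}(s)$ has beads at $(i,j)$ with $1\leq i\leq s-1$ and $0\leq j\leq i-1$, the bead set of $\wedge_m{\mathcal A}(s)$ consists of the positions $(i+\ell s,j)$ with $1\leq i\leq s-1$, $0\leq j\leq i-1$, and $0\leq \ell\leq m-1$. This agrees on the nose with the defining bead set of ${\mathcal E}_m^+(s)$, giving (2).

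For part (1) I would apply the same template, with the asymmetry that the last factor is ${\mathcal B}_1(s)$ rather than another copy of ${\mathcal B}_0(s)$. Substituting Definition \ref{bee}, the first $m-1$ copies of ${\mathcal B}_0(s)$ contribute beads at $(i+\ell s,j)$ for $0\leq\ell\leq m-2$, $1\leq i\leq s-1$, $0\leq j\leq s-i-1$. For the final factor, Lemma \ref{tippy} identifies ${\mathcal B}_1(s)$ as ${\mathcal B}_0(s)$ with the top-diagonal beads $(i,s-i-1)$ removed for $1\leq i\leq s-1$, so this block contributes beads at $(i+(m-1)s,j)$ for $1\leq i\leq s-1$, $0\leq j\leq s-i-2$. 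Taking the union of these two contributions recovers the defining bead set of ${\mathcal E}_m^-(s)$. I do not anticipate any real obstacle: the entire argument is bookkeeping, and the one spot that requires a moment of care is invoking Lemma \ref{tippy} to suppress the top diagonal in the last appended block of part (1); everywhere else the identification is just a substitution of one set-builder description into another.
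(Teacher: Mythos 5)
Your proposal is correct and follows essentially the same route as the paper: the paper's proof also reduces both identities to blockwise bookkeeping, using the projection maps $\pi_{\ell}:(i+\ell s,j)\mapsto(i,j)$ to identify each block of $s$ runners of ${\mathcal E}_m^{\pm}(s)$ with ${\mathcal B}_0(s)$, ${\mathcal B}_1(s)$, or ${\mathcal A}(s)$, which is precisely the inverse of your shift-by-$\ell s$ description of $\wedge$. Your appeal to Lemma \ref{tippy} for the bead set of ${\mathcal B}_1(s)$ is a harmless (and in fact clarifying) variant of quoting Definition \ref{bee} directly.
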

\begin{proof}  Fix an $\ell\in[0,m-1]$. Consider the projection map $\pi_{\ell}$ that takes $(i+\ell s,j)$ to $(i,j)$, where $1\leq i\leq s-1$. We prove each case separately.
\begin{enumerate} 
\item Fix a $\ell\in [0,m-2]$. Then, under $\pi_{\ell}$, the beads with abacus positions $(i+\ell s,j)$ $1\leq i\leq s-1$, and $1\leq j\leq s-i-1$ are in bijection with those in ${\mathcal B}_0(s)$. For $\ell=m-1$,  under $\pi_{m-1}$, beads in positions $(i+(s-1)m,s-i-2)$ where $1\leq i\leq s-2$ are in bijection with ${\mathcal B}_1(s)$.\\
\item Fix an $\ell\in [0,m-1]$. Then, under $\pi_{\ell}$, the beads with abacus positions $(i+\ell s,j)$ $1\leq i\leq s-1$, and $1\leq j\leq s-i-1$ are in bijection with those in ${\mathcal A}(s)$.  
\end{enumerate}
\end{proof}
We are now in a position to prove Theorem \ref{Emax}, which is critical for the rest of our results. 
\begin{proof}[Proof of Theorem \ref{Emax}]
The abaci above are minimal, by construction. It remains to show they satisfy the relevant core properties and that they are of maximal weight. We consider each case separately.
\begin{enumerate}
\item Suppose $(i,j)\in {\mathcal E}_m^-(s)$, with $j>0$. By Lemma \ref{BAM}, $(i,j)\in \wedge_{m-1}({\mathcal B}_0(s))\wedge{\mathcal B}_1(s)$. To see that ${\mathcal E}_m^-(s)$ is an $s$-core, we have to satisfy the conditions of Lemma \ref{smscore}. Suppose $j>0$. If $(i,j)$ is in the copy of ${\mathcal B}_1(s)$, then $(i-s,j)\in {\mathcal E}^-_m(s)$, since ${\mathcal B}_1(s)$ is a sub-abacus of ${\mathcal B}_0(s)$ by Lemma \ref{subb}. If $(i,j)$ is in one of the rightmost $m-2$ copies of ${\mathcal B}_1(s)$, then, $(i-s,j)\in {\mathcal E}^-_m(s).$ If $(i,j)$ is in the leftmost copy ${\mathcal B}_1(s)$, notice that $\pi_0(i)=i.$  It is enough to see that there exists an $(i+(m-1)s,j-1)$ in the rightmost copy of ${\mathcal B}_1(s)$, using $\pi_{m-1}$ and Lemma \ref{tippy}. 

To show that $\mathcal{E}^-_m(s)$ is an $(ms-1)$-core, we map an abacus position $(i+ks,j)$, where $j>0$, to its local coordinates $(i,j)$ via $\pi_k$. By Lemma \ref{notcore}, we know that $(i+1,j-1)$ is in ${\mathcal B}_{\ell}(s)$ where $\ell$ is either 0 or 1. Mapping this local abacus position back to the $ms$-abacus we conclude $(i+1+ks,j-1)\in {\mathcal E}^-_m(s).$ Since $(ms-1,0)\not\in {\mathcal E}^-_m(s)$ by construction, ${\mathcal E}^-_m(s)$ satisfies the criteria for an $(ms-1)$-core.

Finally we consider position $((m-2)s+1,s-2)\in {\mathcal E}^-_m(s)$. By Definition \ref{index} this corresponds to the bead-value $s(ms-1)-s-ms+1$, which by Corollary \ref{bighook} means this partition is the $(s,ms-1)$-core of maximal weight.\\ 
\item The proof is analogous to (1); the abacus position $(ms-1,s-2)$ corresponds to the maximal bead value in the underlying bead-set.
\end{enumerate}
\end{proof}
\section{$ms$-abaci of $(s,ms\pm1)$-cores with distinct parts}
We now wish to turn our attention to simultaneous cores with distinct parts.  This will allow us to provide unified proofs of Theorems \ref{Xiong}, \ref{Straub} and \ref{Straub2}. 
\begin{lemma} \label{stinct} The partition $\lambda$ with minimal $s$-abaci ${\mathcal S}$ has distinct parts if and only if $(i,j)\in {\mathcal S}$ implies that 
\begin{enumerate}
\item $(i-1,j)\not\in {\mathcal S}$ and $(i+1,j)\not\in {\mathcal S}$ if $1<i<s-1,$ and 
\item $(i-1,j)\not\in {\mathcal S}$ if $i=s-1.$ 
\end{enumerate}
\end{lemma}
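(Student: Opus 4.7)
The plan is to reduce the distinct-parts property of $\lambda$ to a purely combinatorial statement about the bead-set via Lemma \ref{onebead}. That lemma identifies the part $\lambda_\alpha$ attached to a bead $x \in X$ with the number of spacers strictly less than $x$. Hence, for any two beads $x_1 < x_2$ in the minimal bead-set $X$, the corresponding parts are equal precisely when every integer in the open interval $(x_1, x_2)$ is itself a bead. It follows that $\lambda$ has distinct parts if and only if no two consecutive integers $x,\,x+1$ both lie in $X$.

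The next step is to translate the condition ``$x,\,x+1 \in X$'' into an adjacency condition on the $s$-abacus using Definition \ref{index}. If $x = i + js$ with $i < s-1$, then $x+1 = (i+1) + js$ sits at position $(i+1,\,j)$; while if $i = s-1$, then $x+1 = (j+1)s$ sits at position $(0,\,j+1)$. Thus the ``no two consecutive beads'' requirement becomes the following rule for every $(i,j) \in \mathcal{S}$: the position immediately to the right (with the natural wraparound from runner $s-1$ down to runner $0$ of the following row) must be a spacer.

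Finally, recording this rule jointly as a left- and right-exclusion at each occupied position recovers the two cases of the lemma. For interior runners $1 < i < s-1$ both horizontal neighbors $(i-1,\,j)$ and $(i+1,\,j)$ are forced outside $\mathcal{S}$; at the right-most runner $i = s-1$ only the left neighbor $(s-2,\,j)$ needs to be stated, since no position $(s,\,j)$ exists on an $s$-abacus. Minimality of $\mathcal{S}$ places the first spacer at $0$, so no additional boundary consideration is needed on the left. The argument is essentially a bookkeeping translation via Definition \ref{index}; the only conceptually delicate point is keeping track of the wraparound $(s-1,\,j) \leftrightarrow (0,\,j+1)$, once which the equivalence follows directly from the spacer count provided by Lemma \ref{onebead}.
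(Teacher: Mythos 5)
Your overall route is the same as the paper's: reduce ``distinct parts'' to the statement that the minimal bead-set contains no two consecutive integers, and then translate consecutiveness of bead values into adjacency of $s$-abacus positions via Definition \ref{index}. In fact your first two steps are more careful than the paper's one-line proof, since you actually derive the no-two-consecutive-beads criterion from Lemma \ref{onebead} rather than asserting it.

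The difficulty is in your last step, and your own text exposes it. You correctly note that the successor of a bead at $(s-1,j)$ sits at $(0,j+1)$, but you then justify stating only the left-exclusion at $i=s-1$ ``since no position $(s,j)$ exists'' --- that is not a justification: beads at $(s-1,j)$ and $(0,j+1)$ are consecutive values and force a repeated part, yet conditions (1)--(2) do not forbid them. Similarly, ``minimality places the first spacer at $0$'' only rules out a bead at $(0,0)$; for $j\geq 1$ a pair of beads at $(0,j)$ and $(1,j)$ (values $js$ and $js+1$) again gives a repeated part and is excluded by neither condition, since (1) only applies when $1<i<s-1$. Concretely, $s=4$ with minimal bead-set $X=\{4,5\}$ gives $\lambda=(4,4)$, which satisfies (1)--(2) vacuously, so the ``if'' direction as recorded does not go through; the same happens with $X=\{3,4\}$ via the wraparound. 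To be fair, the paper's own proof (and the lemma statement itself) glosses over exactly these boundary cases, and they are harmless in the later applications (in Lemma \ref{row} the relevant abaci have beads only in the first row and none on the runners divisible by $s$); but a complete proof of the stated equivalence must either add the wraparound and runner-$0$/runner-$1$ exclusions to the adjacency rule or explain why they can be omitted, and your bookkeeping does neither.
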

\begin{proof} A partition has distinct parts if and only if its minimal bead-set $X$ satisfies the following property: if $x,y\in X$ and $x>y$, then $x-y\neq 1.$ This is exactly the statement of the lemma when translated into $s$-abaci. 
\end{proof}
The combination of Lemma \ref{stinct} and Theorem \ref{Emax} allow us to study the abaci of certain simultaneous core partitions with distinct parts. 
\begin{lemma} \label{row} Let ${\mathcal A}(s)$, ${\mathcal E}^-_m(s)$, and ${\mathcal E}^+_m(s)$ be as above.
\begin{enumerate}
\item The minimal $s$-abacus ${\mathcal S}$ of any $(s,s+1)$-core with distinct parts will be a sub-abacus of ${\mathcal A}(s)$ consisting of beads taken only from its first row. 
\item The minimal $ms$-abacus ${\mathcal M}^-$ of any $(s,ms-1)$-core partition with distinct parts will be a sub-abacus of ${\mathcal E}^-_m(s)$ consisting of beads taken only from its first row. 
\item The minimal $ms$-abacus ${\mathcal M}^+$ of any $(s,ms+1)$-core partition will be a sub-abacus of ${\mathcal E}^+_m(s)$ consisting only of beads taken only from its first row. 
\end{enumerate}
\end{lemma}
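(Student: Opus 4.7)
My plan is to handle the three claims in parallel, since each reduces to the same two steps: a containment step that places the sub-abacus of interest inside the maximal abacus from Theorem~\ref{Emax}, and a ``first-row'' step that rules out any bead with row coordinate $j\geq 1$ once distinct parts are imposed.

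For the containment step, I would apply Proposition~\ref{And}, which applies since $\gcd(s,s+1)=\gcd(s,ms\pm1)=1$: the minimal bead-set of any $(s,s+1)$- or $(s,ms\pm1)$-core lies inside the minimal bead-set of the corresponding maximal core. Since Theorem~\ref{Emax} identifies those maximal cores with the abaci ${\mathcal A}(s)$, ${\mathcal E}^+_m(s)$, and ${\mathcal E}^-_m(s)$, and since the sub-abacus relation depends only on bead-sets and not on the choice of $s$- vs.\ $ms$-resolution, the three claimed inclusions ${\mathcal S}\subseteq {\mathcal A}(s)$, ${\mathcal M}^+\subseteq {\mathcal E}^+_m(s)$, and ${\mathcal M}^-\subseteq {\mathcal E}^-_m(s)$ follow.

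For the first-row step, suppose for contradiction that some bead $(I,j)$ with $j\geq 1$ lies in the relevant sub-abacus. Corollary~\ref{smcore} (the $s$-core property in whichever abacus picture is at hand) delivers the bead directly below, $(I,j-1)$. The appropriate diagonal core condition then delivers a horizontally adjacent bead in the same row $j-1$: in cases~(1) and~(3), Lemma~\ref{s+1core} (or its $ms$-analog) gives $(I-1,j-1)$; in case~(2), the $ms$-analog of Lemma~\ref{notcore} gives $(I+1,j-1)$. In every instance the two beads occupy consecutive columns of row $j-1$, which Lemma~\ref{stinct} forbids in a partition with distinct parts, yielding the contradiction.

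The main obstacle is the boundary and wrap-around behaviour: one must ensure that the diagonal neighbour lands in a column where the core condition and Lemma~\ref{stinct} actually bite, rather than in a column that is intrinsically a spacer or that triggers the wrap-around clause of Lemma~\ref{s+1core}(2) or Lemma~\ref{notcore}(2). Two emptiness observations handle this. First, every column of index $\ell s$ is empty in ${\mathcal A}(s)$ and ${\mathcal E}^\pm_m(s)$, and hence in any sub-abacus: column $0$ is forced empty by minimality together with Corollary~\ref{smcore}, and the remaining columns $s, 2s, \ldots, (m-1)s$ then follow by iterating the $s$-core property. Second, column $ms-1$ is empty in ${\mathcal E}^-_m(s)$, because Lemma~\ref{BAM} glues ${\mathcal B}_1(s)$, whose rightmost runner carries no bead, as the last block on the right. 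These two facts confine the index $I$ to the range in which Lemma~\ref{stinct}(1) or~(2) applies directly to the adjacent pair in row $j-1$, closing off every boundary case.
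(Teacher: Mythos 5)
Your proposal is correct and follows essentially the same route as the paper's proof: containment via Proposition~\ref{And} (with Theorem~\ref{Emax} identifying the maximal cores), then a contradiction from the vertical $s$-core condition and the diagonal $(ms\pm1)$-core condition producing two horizontally adjacent beads in row $j-1$, which Lemma~\ref{stinct} forbids. Your explicit treatment of the wrap-around and empty-column boundary cases is a careful refinement of details the paper leaves implicit, not a different method.
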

\begin{proof} We know by Proposition \ref{And} that ${\mathcal S}$, ${\mathcal M}^-$ and ${\mathcal M}^+$ are sub-abaci of ${\mathcal A}(s)$, ${\mathcal E}_m^+(s)$ and ${\mathcal E}_m^-(s)$. 
\begin{enumerate}
\item Suppose $(i,j)\in {\mathcal S}$ such that $j>0.$ Then $(i,j-1)\in {\mathcal S}$ and $(i-1,j-1)\in {\mathcal S}$ by Proposition \ref{And}, Lemma \ref{s+1core}, Lemma \ref{notcore}, Lemma \ref{topS}. This is a contradiction. 
\item Suppose $(i,j)\in {\mathcal M}^-$ such that $j>0.$ Then  $(i+1,j-1)\in {\mathcal M}^-$ by Lemma \ref{notcore} and Theorem \ref{Emax}(1). However, $(i,j-1)\in {\mathcal M}^-$ by Proposition \ref{And}, Corollary \ref{mscore}, Lemma \ref{smscore}. This is a contradiction. 
\item Suppose $(i,j)\in {\mathcal M}^+$ such that $j>0.$ Then $(i-1,j-1)\in {\mathcal M}^+$ by Lemma \ref{s+1core} and Theorem \ref{Emax}(2). However $(i,j-1)\in {\mathcal M}^+$ for the same reason as in (2).
\end{enumerate}
\end{proof}
We now possess all of the necessary tools to prove Theorems \ref{Xiong}--\ref{Straub2} in a unified, combinatorial fashion.  As noted earlier, we switch our convention and prove Theorem \ref{Straub2} first; Theorem \ref{Straub} then follows from Theorem \ref{middle} and some manipulation.
\begin{proof}[Proof of Theorem \ref{Xiong}] 
There is only one simultaneous $(1,2)$-core partition with distinct parts, the empty partition. There are two simultaneous $(2,3)$-core partitions with distinct parts: the empty partition, and $\lambda=(1)$. This gives us the initial conditions, $F_2=1$ and $F_3=2$.

By Lemma \ref{row}(1), for any $s$-abacus ${\mathcal S}$ of an $(s,s+1)$-core with distinct parts, if $(i,j)\in {\mathcal S}$ then $j=0$ where $0\leq i\leq s-1$. We divide the count into two cases, depending on whether or not $(s-1,0)\in {\mathcal S}$. 

If $(s-1,0)\in {\mathcal S}$, then by Lemma \ref{stinct}, $(s-2,0)\not\in {\mathcal S}$. By considering only the runners $0\leq i\leq s-3$, we conclude there are $F_{s-1}$ possible $s$-abacus arrangements for ${\mathcal S}$ with $(s-1,0)\in {\mathcal S}$. If $(s-1,0)\not\in {\mathcal S}$, then by considering only the runners $0\leq i\leq s-2,$ we can conclude that there are $F_{s}$ possible $s$-abacus arrangements for ${\mathcal S}$ with $(s-1,0)\not\in {\mathcal S}$. Hence the total number of acceptable $s$-abacus arrangements for an $(s,s+1)$-core with distinct parts is $F_{s+1}=F_s+F_{s-1}.$ This completes the proof.\\
\end{proof}
\begin{proof}[Proof of Theorem \ref{Straub2}] There is only one simultaneous $(1,m+1)$-core; the empty partition. By Lemma \ref{BAM} and Lemma \ref{row}(3), there are $m$ simultaneous $(2,2m+1)$-core partitions with distinct parts; the empty partition, and one partition for each set of abacus positions $\{\cup^{m'}_{\ell=0}(1+2\ell ,0)\}$ where $m'\in[0,m-1].$

By Lemma \ref{row}(3) for any $s$-abacus ${\mathcal M}^+$ of a $(s,ms-1)$-core with distinct parts, if $(i,j)\in {\mathcal M}^+$, then $(i,j)=(i+\ell s,0)$ where $0< i\leq s-1$ when $0\leq \ell\leq m-1$. We divide the count into two cases: where $(s-1,0)$ is in ${\mathcal M}^+$, or where it is not.

Suppose first that $(s-1,0)\in {\mathcal M}^+$. Then $(ks-2,0)\not\in {\mathcal M}^-$ for $0\leq k \leq m-1$, by Lemma \ref{stinct}. So we can consider only the $i+\ell s$ where $0<i\leq s-3$ for $0\leq \ell \leq m-1$: the number of such acceptable $m(s-2)$-abacus arrangements is $E^+_m(s-2)$. However there are $m-1$ additional positions $(2s-1,0),(3s-1,0),(4s-1,0),\cdots,(ms-1,0)$ that can also be included without violating Lemma \ref{stinct}. So the total number of acceptable $ms$-abaci from this case is $mE^+_m(s-2).$ 

Suppose $(s-1,0)\not\in {\mathcal M}^+_m(s)$ then $(s-1+\ell s,0)\not\in {\mathcal M}^+$ for $0\leq \ell \leq m-2$. We consider only the $i+\ell s$ where $0<i\leq s-2$ and $0\leq \ell \leq m-1$: there are $E^+_m(s-1)$ possible abacus arrangements. The result follows.\\
\end{proof}
\begin{theorem} \label{middle} $E^-_m(s)=E^+_{m}(s-1)+(m-1)E^+_{m}(s-2).$
\end{theorem}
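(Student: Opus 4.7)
My plan is to mirror the case analysis used for Theorem \ref{Straub2}, now applied to the minimal $ms$-abacus $\mathcal{M}^-$ of an arbitrary $(s,ms-1)$-core with distinct parts. By Lemma \ref{row}(2) combined with Lemma \ref{BAM}(1), the beads of $\mathcal{M}^-$ lie in the first row of $\wedge_{m-1}\mathcal{B}_0(s)\wedge\mathcal{B}_1(s)$; that is, in $m-1$ blocks of $s-1$ consecutive runners followed by one block of length $s-2$ (the final block omits the position $(ms-1,0)$). Throughout, Lemma \ref{stinct} prohibits consecutive runners, while Lemma \ref{smscore}(1) enforces downward closure within each residue class modulo $s$.

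I would then condition on whether $(s-1,0)\in\mathcal{M}^-$. If it is not, downward closure forces $(ks-1,0)\notin\mathcal{M}^-$ for every $k\ge 1$ as well, so the residue class $s-1$ is entirely absent. The remaining beads, drawn from $m$ blocks of length $s-2$ in residues $\{1,\ldots,s-2\}$ subject to the same no-consecutive and column constraints, form precisely the combinatorial structure of the first-row sub-abaci of $\mathcal{E}^+_m(s-1)$ as enumerated in the proof of Theorem \ref{Straub2}. This case contributes $E^+_m(s-1)$.

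If instead $(s-1,0)\in\mathcal{M}^-$, then Lemma \ref{stinct} forces $(s-2,0)\notin\mathcal{M}^-$, and downward closure in turn rules out residue class $s-2$ altogether. The residue class $s-1$ supports a chain of some height $k'\in\{1,\ldots,m-1\}$, the upper bound being imposed by the absence of $(ms-1,0)$ from $\mathcal{E}^-_m(s)$; independently, the residues $\{1,\ldots,s-3\}$ fill $m$ blocks of length $s-3$ with the same constraints, reproducing the first-row enumeration of $\mathcal{E}^+_m(s-2)$. This case contributes $(m-1)\,E^+_m(s-2)$, and summing the two cases yields the claimed identity.

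The main obstacle I anticipate is the bookkeeping needed to verify the two structural isomorphisms — namely, that once the appropriate residue classes are deleted, the no-consecutive-integer and downward-closure constraints on the reduced runner set really do reproduce the enumerations for $\mathcal{E}^+_m(s-1)$ and $\mathcal{E}^+_m(s-2)$. The asymmetry introduced by the shortened block $\mathcal{B}_1(s)$ at the end of $\mathcal{E}^-_m(s)$, responsible for the coefficient $(m-1)$ rather than $m$, is the place where the argument most needs care.
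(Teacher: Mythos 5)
Your proposal is correct and takes essentially the same route as the paper's proof: restrict to the first row via Lemma \ref{row}(2), split on whether $(s-1,0)$ is a bead, and identify the surviving runners with the first-row configurations counted by $E^+_m(s-1)$ and $E^+_m(s-2)$, the residue class $s-1$ supplying the factor $m-1$. Your explicit description of the residue-$(s-1)$ beads as a prefix chain of height $1,\ldots,m-1$ (capped by the missing position $(ms-1,0)$) is, if anything, a slightly cleaner account of where the coefficient $m-1$ comes from than the paper's phrasing about the ``$m-2$ additional positions.''
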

\begin{proof} There is only one simultaneous $(1,m-1)$-core, the empty partition. By Lemma \ref{BAM} and Lemma \ref{row}(2), there are $m-1$ simultaneous $(2,2m-1)$-core partitions with distinct parts; namely the empty partition, plus one partition for each set of abacus positions $\cup^{m'}_{\ell=0}(1+2\ell,0)$ where $m'\in[0,m-2].$

By Lemma \ref{row}(2) for any $s$-abacus ${\mathcal M}^-$ of a $(s,ms-1)$-core with distinct parts, if $(i,j)\in {\mathcal M}^-$, then $(i,j)=(i+\ell s,0)$ where $0< i\leq s-1$ when $0\leq \ell\leq m-2,$ and $0< i\leq s-2$ when $\ell=m-1.$ We divide our count into two cases, depending on whether or not $(s-1,0)\in {\mathcal M}^-$.

Suppose first that $(s-1,0)\in {\mathcal M}^-$. Then $(ks-2,0)\not\in {\mathcal M}^-$ for $0\leq k \leq m-1$, by Lemma \ref{stinct}. So we can consider only the $i+\ell s$ runners, where $0<i \leq s-3$ and $0\leq \ell \leq m-1$: the number of possible abacus arrangements is $E^-_m(s-2)$.  However there are $m-2$ additional positions $(2s-1,0),(3s-1,0),(4s-1,0),\cdots,(m-1)s-1,0)$ that can also be included without violating Lemma \ref{stinct}. So the total number of acceptable $ms$-abaci from this case is $(m-1)E^+_m(s-2).$ 

Suppose $(s-1,0)\not\in {\mathcal M}^-_m(s)$. Then we can consider only the $i+\ell s$ runners, where $0\leq i\leq s-2$ for $0\leq \ell \leq m-1$: the number of such acceptable $m(s-2)$-abaci arrangements is $E^+_m(s-1)$. Then the total number of acceptable $ms$-abaci arrangements is $E^+_m(s-1)+(m-1)E^+_m(s-2)$.
\end{proof}
Theorem \ref{Straub} follows using a purely algebraic manipulation first employed by Straub. 
\begin{proof}[Proof of Theorem \ref{Straub}]
By Theorem \ref{middle}, we know $E^-_m(s)=E^+_m(s-1)+(m-1)E_m^+(s-2)$. By Theorem \ref{Straub2} we have $E^+_m(s-1)=E^+_m(s-1)+mE_m^+(s-2)$ and $E^+_m(s-2)=E^+_m(s-3)+mE_m^+(s-4)$. Substituting, we get $$E^-_m(s)=E^+_m(s-1)+mE_m^+(s-2)+(m-1)(E^+_m(s-3)+mE_m^+(s-4)).$$ Expanding, we have
$$E^+_m(s-1)+mE_m^+(s-2)+(m-1)E^+_m(s-3)+(m-1)mE_m^+(s-4).$$
Rearranging terms, we arrive at $E^-_m(s)=E^-_m(s-1)+mE^-_m(s-2)$.
\end{proof}
\section{the $ms$-abacus of the longest $(s,ms-1,ms+1)$-core}
We now move to discuss triply simultaneous core partitions. Lemmas \ref{intzero}, \ref{intone}, \ref{pyra} and Corollary \ref{pbase} follow from the relevant definitions. In the interest of brevity the proofs are omitted.
\begin{definition}\label{inter} Let $A$ and $B$ each be $s$-abaci. Then the intersection of $A$ and $B$, denoted $A\cap B$, is the sub-abacus of all beads in both $A$ and $B$. 
\end{definition}
\begin{definition}\label{pyra} Let $\mathcal{S}$ be an $s$-abacus. We say $\mathcal{S}$ is an $s-${\bf$\text{pyramid}$} with base ${\bf [\gamma,\gamma']}$ if when the first row consists of abacus positions $(i,0)$, where $\gamma\leq i\leq \gamma'$, then the second row consists of positions $(i,1)$ where $\gamma'+1\leq i\leq \gamma-1$, and the third row consists of beads in abacus position $(i,2)$ where $\gamma+2\leq i\leq \gamma'-2$, and so on.
\end{definition}
We let ${\mathcal C}_k(s)={\mathcal A}(s)\cap {\mathcal B}_k(s)$.
\begin{lemma}\label{intzero} Let ${\mathcal C}_0(s)=A(s)\cap B_0(s).$ Then ${\mathcal C}_0(s)$ contains beads at all positions $(i,j)$ where $(i,j)$ is such that
\begin{enumerate}
\item $0\leq j\leq i-1$ if $0< i\leq \left \lfloor{\frac{s-1}{2}}\right \rfloor$
\item $0\leq j\leq s-i-1$ if $\left \lfloor{\frac{s+1}{2}}\right \rfloor\leq i\leq s-1.$
\end{enumerate}
\end{lemma}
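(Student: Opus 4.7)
The plan is to unpack the two defining conditions of ${\mathcal A}(s)$ (Definition \ref{AbS}) and ${\mathcal B}_0(s)$ (Definition \ref{bee} with $k=0$) and intersect them, then translate the resulting numerical constraint into the two floor-function cases in the statement. By Definition \ref{inter}, a position $(i,j)$ lies in ${\mathcal C}_0(s) = {\mathcal A}(s) \cap {\mathcal B}_0(s)$ if and only if it lies in both. The first condition requires $0 < i \leq s-1$ and $0 \leq j \leq i-1$, while the second requires $0 < i \leq s-1$ and $0 \leq j \leq s-i-1$. Combining these, $(i,j) \in {\mathcal C}_0(s)$ exactly when $0 < i \leq s-1$ and $0 \leq j \leq \min(i-1,\ s-i-1)$.

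Next I would split on which of $i-1$ and $s-i-1$ achieves the minimum. The inequality $i-1 \leq s-i-1$ is equivalent to $2i \leq s$, i.e.\ $i \leq s/2$. Writing this in terms of floors, one checks that $i \leq s/2$ holds precisely when $i \leq \left\lfloor\frac{s-1}{2}\right\rfloor$ (for $s$ odd this is $(s-1)/2$, and for $s$ even this is $(s-2)/2$, and in both cases $i \leq s/2$ and $i \leq \lfloor (s-1)/2 \rfloor$ describe the same integers, with the caveat that when $s$ is even the value $i = s/2$ gives $i-1 = s-i-1$ so it can be placed in either case). In this range the bound becomes $0 \leq j \leq i-1$, yielding case (1). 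Symmetrically, the inequality $s-i-1 \leq i-1$ holds precisely when $i \geq \lceil s/2 \rceil = \left\lfloor \frac{s+1}{2} \right\rfloor$, and in this range the bound is $0 \leq j \leq s-i-1$, yielding case (2).

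The only subtlety is the boundary between the two ranges. When $s$ is odd, $\left\lfloor \frac{s-1}{2}\right\rfloor = (s-1)/2$ and $\left\lfloor \frac{s+1}{2}\right\rfloor = (s+1)/2$ partition $\{1,\dots,s-1\}$ cleanly. When $s$ is even, $\left\lfloor \frac{s-1}{2}\right\rfloor = s/2 - 1$ and $\left\lfloor \frac{s+1}{2}\right\rfloor = s/2$ also partition the range, and at $i = s/2$ the two bounds $i-1$ and $s-i-1$ coincide so the stated condition in case (2) is unambiguous. Thus both cases together exhaust all $i$ with $0 < i \leq s-1$, and the description matches the intersection exactly, completing the proof. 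I do not anticipate any real obstacle here; the argument is essentially a direct computation, with the only mild care needed being the parity split to justify the floor expressions.
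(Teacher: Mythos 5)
Your proof is correct and is essentially the paper's own argument spelled out in detail: the paper simply states that the lemma ``follows by construction,'' i.e.\ from intersecting Definition \ref{AbS} with Definition \ref{bee} at $k=0$, which is exactly the $\min(i-1,\,s-i-1)$ computation and parity check you carry out. The careful handling of the boundary case $i=s/2$ for even $s$ is a welcome addition but does not change the approach.
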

\begin{proof} Follows by construction.
\end{proof}
\begin{example} ${\mathcal C}_0(5)=\{(1,0),(2,0),(3,0),(4,0),(2,1),(3,1)\}$. [See Figure 6.]
\end{example}
{\scriptsize
\begin{figure}[h!] \label{Czero5}
\label{abacus91011}
\begin{center}

\[
\begin{array}{ccccc}
15 & 16 & 17 & 18 & 19\\
10& 11 & 12& 13 & 14\\
5& 6 & \encircle{7} & \encircle{8} & 9 \\
0 & \encircle{1} & \encircle{2} & \encircle{3} & \encircle{4}\\
\end{array} 
\] 

\caption{${\mathcal C}_0(5)={\mathcal A(5)}\cap{\mathcal B_0(5)}$}

\end{center}
\end{figure}
} 
\begin{lemma}\label{intone} Let ${\mathcal C}_1(s)={\mathcal A}(s)\cap {\mathcal B}_1(s)$. Then ${\mathcal C}_1(s)$ contains beads at all positions $(i,j)$ where $(i,j)$ is such that
\begin{enumerate}
\item $0\leq j\leq i-1$ if $0< i\leq \left \lfloor{\frac{s-1}{2}}\right \rfloor$ and
\item $0\leq j\leq s-i-2$ if $\left \lfloor{\frac{s+1}{2}}\right \rfloor\leq i < s-1.$
\end{enumerate}
\end{lemma}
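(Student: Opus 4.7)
The plan is to mimic the proof of Lemma \ref{intzero}, namely to compute the intersection directly from the explicit bead-descriptions of the two abaci, and then split into cases based on which of the two upper bounds on $j$ is smaller.

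First I would recall the two constituent abaci. By Definition \ref{AbS}, $\mathcal{A}(s)$ consists of the bead positions $(i,j)$ with $0<i\leq s-1$ and $0\leq j\leq i-1$. For $\mathcal{B}_1(s)$, I would apply Lemma \ref{tippy}: starting from $\mathcal{B}_0(s)$ (which has beads at $(i,j)$ with $0<i\leq s-1$, $0\leq j\leq s-i-1$) and removing the ``top'' bead $(i,s-i-1)$ from every nonempty runner yields exactly the positions $(i,j)$ with $0<i\leq s-2$ and $0\leq j\leq s-i-2$. In particular, column $i=s-1$ of $\mathcal{B}_1(s)$ is empty.

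Intersecting, a position $(i,j)$ lies in $\mathcal{C}_1(s)=\mathcal{A}(s)\cap \mathcal{B}_1(s)$ if and only if
\[
0<i\leq s-2 \quad\text{and}\quad 0\leq j\leq \min(i-1,\,s-i-2).
\]
The condition $i\leq s-2$ already forces $i<s-1$, matching the index bound in the statement. Next I would analyze the minimum: one has $i-1\leq s-i-2$ precisely when $2i\leq s-1$, i.e.\ $i\leq\lfloor(s-1)/2\rfloor$, and the reverse inequality holds when $i\geq\lfloor(s+1)/2\rfloor$. Splitting on this dichotomy gives exactly the two cases $0\leq j\leq i-1$ and $0\leq j\leq s-i-2$ stated in the lemma.

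The only mild subtlety is verifying the floor-function boundary for both parities of $s$; since $\lfloor(s-1)/2\rfloor$ and $\lfloor(s+1)/2\rfloor$ differ by $0$ when $s$ is even and by $1$ when $s$ is odd, one checks that no index is missed and that at the ``center'' index the two descriptions agree (both reduce to $0\leq j\leq(s-2)/2$ in the even case and to $0\leq j\leq (s-3)/2$ or $0\leq j\leq(s-1)/2$ for the two central columns in the odd case). This is routine and is the only step that requires any care; there is no genuine obstacle, so the argument ends here.
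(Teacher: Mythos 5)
Your proposal is correct and takes essentially the same route as the paper, which omits the proof as ``following from the relevant definitions'': you simply intersect the explicit bead descriptions of ${\mathcal A}(s)$ and ${\mathcal B}_1(s)$ (obtained via Lemma \ref{tippy}) and split on which of $i-1$ and $s-i-2$ is the minimum, which is exactly the intended computation. One harmless slip in your closing parenthetical: $\left\lfloor\frac{s-1}{2}\right\rfloor$ and $\left\lfloor\frac{s+1}{2}\right\rfloor$ always differ by exactly $1$ (for either parity of $s$), so the two ranges of $i$ are disjoint and complementary and there is no ``center'' column where both descriptions apply; this does not affect the argument, since your dichotomy $2i\leq s-1$ versus $2i\geq s$ already determines the minimum in every column.
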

\begin{example} ${\mathcal C}_1(5)=\{(1,0),(2,0),(3,0),(2,1)\}$. [See Figure 7.]
\end{example}
{\scriptsize
\begin{figure}[h!]
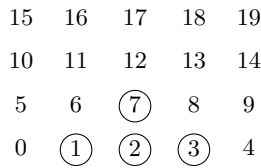
 \label{Cone5}
\label{abacus91011}
\begin{center}

\[
\begin{array}{ccccc}
15 & 16 & 17 & 18 & 19\\
10& 11 & 12 & 13 & 14\\
5& 6 & \encircle{7} & 8 & 9 \\
0 & \encircle{1} & \encircle{2} & \encircle{3} & 4\\
\end{array} 
\] 

\caption{${\mathcal C}_1(5)={\mathcal A(5)}\cap{\mathcal B_1(5)}$}

\end{center}
\end{figure}
} 
\begin{corollary} \label{pbase} Let $C_0(s)$ and $C_{1}(s)$ as above. Then
\begin{enumerate}
\item $C_0(s)$ is a pyramid with base $[1,s-1]$.
\item $C_1(s)$ is a pyramid with base $[1,s-2]$.
\end{enumerate}
\end{corollary}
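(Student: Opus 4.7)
The plan is to unpack the two definitions and show directly, position by position, that they describe the same sub-abacus. For part (1), I would first restate what a pyramid with base $[1,s-1]$ means: by Definition \ref{pyra}, its $k$-th row ($k\geq 0$) consists of the positions $(i,k)$ with $1+k\leq i\leq s-1-k$. Thus a position $(i,j)$ lies in the pyramid with base $[1,s-1]$ precisely when both $j\leq i-1$ and $j\leq s-i-1$ hold.

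Next I would compare this to the description of ${\mathcal C}_0(s)$ given in Lemma \ref{intzero}. In the left-hand range $0<i\leq \lfloor (s-1)/2\rfloor$, the constraint $2i\leq s-1$ gives $i-1\leq s-i-1$, so the constraint $j\leq i-1$ from Lemma \ref{intzero} is exactly the tighter of the two pyramid inequalities, while $j\leq s-i-1$ is automatic. Symmetrically, for $i\geq \lfloor (s+1)/2\rfloor$ we have $2i\geq s+1$, so $s-i-1\leq i-1$ and the constraint $j\leq s-i-1$ from Lemma \ref{intzero} becomes the tighter one. In both cases, $(i,j)\in {\mathcal C}_0(s)$ is equivalent to $1+j\leq i\leq s-1-j$, i.e., membership in the pyramid with base $[1,s-1]$. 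Both parities of $s$ are covered, since $\lfloor (s-1)/2\rfloor$ and $\lfloor (s+1)/2\rfloor$ always partition the integers $1\leq i\leq s-1$.

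Part (2) is carried out by the same template. A pyramid with base $[1,s-2]$ consists, at row $k$, of $(i,k)$ with $1+k\leq i\leq s-2-k$, which is the conjunction $j\leq i-1$ and $j\leq s-i-2$. The description of ${\mathcal C}_1(s)$ in Lemma \ref{intone} gives $j\leq i-1$ when $0<i\leq \lfloor (s-1)/2\rfloor$ and $j\leq s-i-2$ when $\lfloor (s+1)/2\rfloor\leq i< s-1$; the same parity comparison as above shows that in each range the stated inequality is the binding one while the other is automatic. Noting that $i=s-1$ is excluded from ${\mathcal C}_1(s)$ (consistent with the base terminating at $s-2$), we obtain the pyramid with base $[1,s-2]$.

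There is no real obstacle here; the only subtle point to guard against is an off-by-one caused by the parity of $s$ at the middle column $i=(s-1)/2$ or $i=s/2$. I would explicitly check that $\lfloor (s-1)/2\rfloor$ and $\lfloor (s+1)/2\rfloor$ differ by $1$ when $s$ is even (so the two ranges in Lemmas \ref{intzero} and \ref{intone} tile $\{1,\dots,s-1\}$ without overlap) and coincide only through the inequality $i-1=s-i-1$ when $s$ is odd (so the apex column is counted consistently by either clause), and then conclude.
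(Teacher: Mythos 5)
Your proof is correct and matches the paper's intent: the paper omits the proof of Corollary \ref{pbase}, stating only that it follows from the relevant definitions, and your direct verification---checking that in each of the two column ranges of Lemmas \ref{intzero} and \ref{intone} the stated bound on $j$ is the binding one of the two pyramid inequalities, while the other is automatic, and that the ranges tile $\{1,\dots,s-1\}$---is exactly that omitted argument. One cosmetic slip: for even $s$ at $i=\lfloor (s+1)/2\rfloor = s/2$ you only have $2i\geq s$ (not $2i\geq s+1$), but that already yields $s-i-1\leq i-1$, so the conclusion is unaffected (and your closing parity remark has the roles of odd and even $s$ interchanged, since $i-1=s-i-1$ can only occur for even $s$).
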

\begin{lemma} \label{pyra} Let ${\mathcal S}$ be an $s$-abacus. If ${\mathcal S}$ is a pyramid, and $(i,j)\in {\mathcal S}$, where $j>0$ then the following holds:
\begin{enumerate}
\item $(i+1,j-1)\in {\mathcal S}$
\item $(i-1,j-1)\in {\mathcal S}$.
\end{enumerate}
\end{lemma}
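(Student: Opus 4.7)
The plan is to reduce the lemma to a direct check of the inequalities that cut out each row of an $s$-pyramid. Reading Definition \ref{pyra} (corroborated by the figures displaying $\mathcal{C}_0(5)$ and $\mathcal{C}_1(5)$), the row at height $j$ of a pyramid with base $[\gamma,\gamma']$ consists of precisely those positions $(i,j)$ with $\gamma+j \le i \le \gamma'-j$, so each successive row is obtained from the one below by trimming one position off each end.

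Given $(i,j)\in\mathcal{S}$ with $j>0$, these defining inequalities give $\gamma+j \le i \le \gamma'-j$. I would first verify $(i+1,j-1)\in\mathcal{S}$ by checking that $\gamma+(j-1) \le i+1 \le \gamma'-(j-1)$: the left bound follows at once from $i \ge \gamma+j$, while the right bound follows from $i \le \gamma'-j$ (which gives $i+1 \le \gamma'-j+1$). The verification of $(i-1,j-1)\in\mathcal{S}$ is entirely symmetric, subtracting $1$ throughout and using that $i-1 \ge \gamma+j-1$ and $i-1 \le \gamma'-j-1 \le \gamma'-j+1$. Informally, the row at height $j-1$ extends one position farther to the left and one position farther to the right than the row at height $j$, so both diagonal neighbors in the row immediately below must be present.

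The only real obstacle is confirming the intended reading of Definition \ref{pyra}, since the inequality listed there for the second row appears to contain a typographical inversion (the range $\gamma'+1 \le i \le \gamma-1$ is empty when $\gamma < \gamma'$, whereas the third row is correctly described by $\gamma+2 \le i \le \gamma'-2$). Once the definition is resolved in favor of the natural shrinking-by-one-on-each-side interpretation, which is exactly what Corollary \ref{pbase} and Lemmas \ref{intzero}, \ref{intone} already use implicitly, the proof collapses to the two-line index computation above. This is consistent with the authors' remark that the proof is omitted in the interest of brevity.
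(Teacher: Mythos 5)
Your proposal is correct and matches the paper's intent exactly: the authors omit the proof, stating that the lemma ``follows from the relevant definitions,'' and your direct index check that row $j-1$ of a pyramid with base $[\gamma,\gamma']$ extends one position beyond row $j$ on each side is precisely that argument. You also correctly identify and resolve the typographical inversion in the stated range for the second row in Definition~\ref{pyra}, in a way consistent with Lemmas~\ref{intzero} and~\ref{intone} and the examples.
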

\begin{example} $C_0(5)$ is a pyramid with base $[1,4]$. $C_1(5)$ is a pyramid with base $[1,3]$. They both satisfy Lemma \ref{pyra}. [See Figures 6 and 7.]
\end{example}
\begin{lemma} \label{wedgecap} Suppose $A$ and $B$ are $s$-abaci and $A'$ and $B'$ are $t$-abaci. Then
$$(A\wedge A')\cap (B\wedge B')=(A\cap B)\wedge (A'\cap B').$$
\end{lemma}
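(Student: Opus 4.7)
The plan is to prove the set equality by unpacking the definitions of $\wedge$ and $\cap$ and showing inclusion in both directions simultaneously via a case split on the runner index. Since both sides are subsets of the $(s+t)$-abacus, it suffices to show that for every position $(i,j)$ with $0 \leq i \leq s+t-1$ and $j \geq 0$, we have $(i,j)$ belongs to the left-hand side if and only if it belongs to the right-hand side.

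The key observation is that Definition \ref{Oplus} gives a clean membership criterion for $\wedge$: a position $(i,j)$ lies in $A \wedge A'$ exactly when either $0 \leq i \leq s-1$ and $(i,j) \in A$, or $s \leq i \leq s+t-1$ and $(i-s, j) \in A'$. I would split into two cases based on the runner index $i$.

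In the first case, when $0 \leq i \leq s-1$, membership of $(i,j)$ in $A \wedge A'$ is equivalent to $(i,j) \in A$, and similarly $(i,j) \in B \wedge B'$ is equivalent to $(i,j) \in B$; so the intersection on the left reduces to $(i,j) \in A \cap B$. On the right, membership of $(i,j)$ in $(A \cap B) \wedge (A' \cap B')$ is, by the same rule and since $i$ lies in the first $s$ runners, equivalent to $(i,j) \in A \cap B$. The two sides coincide. In the second case, when $s \leq i \leq s+t-1$, the left-hand side reduces to $(i-s,j) \in A'$ and $(i-s,j) \in B'$, i.e.\ $(i-s,j) \in A' \cap B'$, while the right-hand side reduces to the same condition by the definition of $\wedge$ applied to the appended abacus $(A\cap B) \wedge (A' \cap B')$. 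Hence in both cases the two conditions agree, proving the desired identity.

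There is essentially no main obstacle: this lemma is a purely formal distributivity statement recording that $\wedge$ (which is a disjoint union of runners) commutes with intersection. The only thing to watch is the bookkeeping of the runner-shift by $s$ in the second block, which is handled uniformly by the projection $(i,j) \mapsto (i-s,j)$ used implicitly in Definition \ref{Oplus}.
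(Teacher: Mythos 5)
Your proof is correct and follows the same route as the paper, which simply cites Definitions \ref{Oplus} and \ref{inter}; you have merely written out the runner-index case split that the paper leaves implicit. The bookkeeping (shift by $s$ on the second block) is handled correctly, so nothing is missing.
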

\begin{proof} This follows from Definitions \ref{Oplus} and \ref{inter}.
\end{proof}
\begin{lemma} ${\mathcal E}^-_m(s)\cap{\mathcal E}^+_m(s)=(\wedge_{m-1}{\mathcal C}_0(s))\wedge {\mathcal C}_1(s).$
\end{lemma}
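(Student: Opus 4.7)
The plan is to combine Lemma \ref{BAM}, which expresses each of $\mathcal{E}^-_m(s)$ and $\mathcal{E}^+_m(s)$ as an iterated wedge of smaller abaci, with Lemma \ref{wedgecap}, which says that intersection distributes over the wedge operation. First I would write
\begin{equation*}
\mathcal{E}^-_m(s) = (\wedge_{m-1}\mathcal{B}_0(s))\wedge \mathcal{B}_1(s), \qquad \mathcal{E}^+_m(s) = (\wedge_{m-1}\mathcal{A}(s))\wedge \mathcal{A}(s),
\end{equation*}
where the second identity just rewrites $\wedge_m \mathcal{A}(s)$ by peeling off the last copy of $\mathcal{A}(s)$ so that the two decompositions have matching ``shapes'' on the right.

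Next I would extend Lemma \ref{wedgecap} to the iterated wedge by a short induction: if $A_1,\dots,A_{m-1}$ are $s$-abaci and $B_1,\dots,B_{m-1}$ are $s$-abaci, then
\begin{equation*}
(A_1\wedge\cdots\wedge A_{m-1})\cap(B_1\wedge\cdots\wedge B_{m-1}) = (A_1\cap B_1)\wedge\cdots\wedge (A_{m-1}\cap B_{m-1}).
\end{equation*}
The base case $m-1=2$ is Lemma \ref{wedgecap}, and the inductive step applies Lemma \ref{wedgecap} once more after grouping the first $m-2$ factors. Specialized to $A_\ell = \mathcal{B}_0(s)$ and $B_\ell = \mathcal{A}(s)$ for $1\le \ell\le m-1$, this gives
\begin{equation*}
(\wedge_{m-1}\mathcal{B}_0(s))\cap(\wedge_{m-1}\mathcal{A}(s)) = \wedge_{m-1}(\mathcal{B}_0(s)\cap\mathcal{A}(s)) = \wedge_{m-1}\mathcal{C}_0(s).
\end{equation*}

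Applying Lemma \ref{wedgecap} one last time at the outermost level, with the two ``$s$-abaci on the left'' being the iterated wedges above and the two ``$t$-abaci on the right'' being $\mathcal{B}_1(s)$ and $\mathcal{A}(s)$, yields
\begin{equation*}
\mathcal{E}^-_m(s)\cap\mathcal{E}^+_m(s) = \bigl[(\wedge_{m-1}\mathcal{B}_0(s))\cap(\wedge_{m-1}\mathcal{A}(s))\bigr]\wedge\bigl[\mathcal{B}_1(s)\cap\mathcal{A}(s)\bigr] = (\wedge_{m-1}\mathcal{C}_0(s))\wedge\mathcal{C}_1(s),
\end{equation*}
using the definition $\mathcal{C}_1(s)=\mathcal{A}(s)\cap\mathcal{B}_1(s)$ in the final equality.

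The proof is essentially formal manipulation once Lemma \ref{wedgecap} is in hand, so there is no real obstacle; the only point that requires a line of care is making sure that the iterated wedge is parsed so that intersection distributes factor-by-factor, which is precisely the content of the induction above. One should also note that $\mathcal{A}(s)$, $\mathcal{B}_0(s)$, and $\mathcal{B}_1(s)$ are all $s$-abaci, so all the runner-lengths line up and Definition \ref{Oplus} is being applied consistently on both sides of the intersection.
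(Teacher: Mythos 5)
Your proposal is correct and follows essentially the same route as the paper: both decompose $\mathcal{E}^{\pm}_m(s)$ via Lemma \ref{BAM} and apply Lemma \ref{wedgecap} repeatedly (factor-by-factor across the wedge positions), concluding with the identities $\mathcal{C}_0(s)=\mathcal{A}(s)\cap\mathcal{B}_0(s)$ and $\mathcal{C}_1(s)=\mathcal{A}(s)\cap\mathcal{B}_1(s)$. Your version merely spells out the induction on the iterated wedge that the paper leaves implicit under ``repeated use of Lemma \ref{wedgecap}.''
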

\begin{proof} By a repeated use of Lemma \ref{wedgecap}, it is enough to look at the intersection of each of the constituent $s$-abaci of ${\mathcal E}^{\pm}_m(s)$ at each of the wedge positions $\ell$, as $0\leq \ell\leq m.$ The result follows from Lemmas \ref{intzero} and \ref{intone}.
\end{proof}
\begin{lemma} \label{csub} ${\mathcal C}_1(s)$ is a sub-abacus of ${\mathcal C}_0(s)$.
\end{lemma}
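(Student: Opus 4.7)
The plan is to deduce this directly from Corollary \ref{subb}, which already establishes ${\mathcal B}_1(s)\subseteq {\mathcal B}_0(s)$, combined with monotonicity of the intersection operation introduced in Definition \ref{inter}. Since $A\cap B$ is literally the set of positions lying in both abaci, intersecting both sides of Corollary \ref{subb} with ${\mathcal A}(s)$ on the left yields
$${\mathcal C}_1(s) = {\mathcal A}(s)\cap {\mathcal B}_1(s) \subseteq {\mathcal A}(s)\cap {\mathcal B}_0(s) = {\mathcal C}_0(s),$$
which is exactly the claim. Spelled out: any $(i,j)\in {\mathcal C}_1(s)$ satisfies both $(i,j)\in {\mathcal A}(s)$ and $(i,j)\in {\mathcal B}_1(s)$; Corollary \ref{subb} upgrades the latter to $(i,j)\in {\mathcal B}_0(s)$, hence $(i,j)\in {\mathcal C}_0(s)$.

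As an independent cross-check I would compare the explicit descriptions in Lemmas \ref{intzero} and \ref{intone}. Their small-$i$ clauses (the range $0<i\leq \lfloor (s-1)/2\rfloor$ with $0\leq j\leq i-1$) are identical, so those beads automatically lie in both. For the large-$i$ clauses, ${\mathcal C}_1(s)$ uses $\lfloor (s+1)/2\rfloor\leq i<s-1$ with $0\leq j\leq s-i-2$, which is strictly tighter than the range $\lfloor (s+1)/2\rfloor\leq i\leq s-1$ with $0\leq j\leq s-i-1$ describing ${\mathcal C}_0(s)$. The inclusion is then visible bead-by-bead.

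There is essentially no obstacle here: the lemma is a formal consequence of a previously-established containment, and its purpose is presumably to record the inclusion in a citable form so that later arguments about ${\mathcal E}^-_m(s)\cap {\mathcal E}^+_m(s)$ and the longest $(s,ms-1,ms+1)$-core can quote it as a named fact.
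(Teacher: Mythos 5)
Your argument is correct and matches the paper's own proof, which likewise deduces the inclusion from Corollary \ref{subb} together with the definitions ${\mathcal C}_k(s)={\mathcal A}(s)\cap{\mathcal B}_k(s)$; your version simply spells out the monotonicity-of-intersection step and adds a bead-by-bead cross-check via Lemmas \ref{intzero} and \ref{intone}.
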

\begin{proof} This follows from Corollary \ref{subb} and the definitions of ${\mathcal C}_1(s)$ and ${\mathcal C}_0(s)$.
\end{proof}
The simultaneous $(a,b,c)$-core partition with the most parts is called the {\bf longest} one.
\begin{lemma} Let ${\mathcal L}_m(s)$ is the minimal $ms$-abacus of the longest $(s,ms-1,ms+1)$-core.  Then
$${\mathcal L}_m(s)=(\wedge_{m-1}{\mathcal C}_0(s))\wedge {\mathcal C}_1(s).$$
\end{lemma}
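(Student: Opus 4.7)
My plan is to establish the equality by a sandwich argument: show that every $(s,ms-1,ms+1)$-core's minimal $ms$-abacus sits inside $(\wedge_{m-1}{\mathcal C}_0(s))\wedge {\mathcal C}_1(s)$, and then show that this wedge is itself the minimal $ms$-abacus of some $(s,ms-1,ms+1)$-core. The containment direction is immediate from Proposition \ref{And} applied in the $ms$-abacus setting together with Theorem \ref{Emax}: any such abacus is simultaneously a sub-abacus of ${\mathcal E}_m^-(s)$ and of ${\mathcal E}_m^+(s)$, hence of their intersection, which the preceding lemma identifies with $(\wedge_{m-1}{\mathcal C}_0(s))\wedge {\mathcal C}_1(s)$. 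Since the number of parts of a partition equals the number of beads in its minimal abacus (Lemma \ref{onebead}), this already bounds the length of every $(s,ms-1,ms+1)$-core by the number of beads in this wedge.

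For the realisation direction I would verify the three core properties directly for $(\wedge_{m-1}{\mathcal C}_0(s))\wedge {\mathcal C}_1(s)$. Minimality is clear since Lemma \ref{intzero} excludes $(0,0)$ from ${\mathcal C}_0(s)$, so bead-value $0$ is a spacer. For the $s$-core, $(ms-1)$-core, and $(ms+1)$-core conditions on the $ms$-abacus, the within-block cases are handled uniformly by the pyramid structure recorded in Corollary \ref{pbase} together with Lemma \ref{pyra}, which furnishes both the diagonal beads $(i\pm 1,j-1)$ and the vertical bead $(i,j-1)$ required by each condition. The cross-block $s$-shift from block $m-1$ into block $m-2$ is immediate from ${\mathcal C}_1(s)\subseteq{\mathcal C}_0(s)$ (Lemma \ref{csub}). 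The $s$-shift that wraps from block $0$ back to block $m-1$ reduces to the implication that a bead at $(i,j)\in{\mathcal C}_0(s)$ with $j\geq 1$ forces a bead at $(i,j-1)\in{\mathcal C}_1(s)$, which follows from a short two-case comparison of Lemmas \ref{intzero} and \ref{intone} using the defining inequalities of the two pyramids. The wrap-around cases for the $(ms\pm 1)$-core conditions are vacuous, since they would require beads at local runner $0$ or at $(s-1,j)$ with $j\geq 1$, and neither ${\mathcal C}_0(s)$ nor ${\mathcal C}_1(s)$ has any.

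Combining the two steps, $(\wedge_{m-1}{\mathcal C}_0(s))\wedge {\mathcal C}_1(s)$ is both a valid minimal $ms$-abacus of an $(s,ms-1,ms+1)$-core and a sub-abacus bound for every such minimal abacus, so it coincides with ${\mathcal L}_m(s)$. The main obstacle I anticipate is the cross-block bookkeeping, and in particular the $s$-core wrap-around above: this is the one place where the single-bead difference between ${\mathcal C}_0(s)$ and ${\mathcal C}_1(s)$ (inherited via the intersection from Lemma \ref{tippy}) must mesh exactly with the opposite-side pyramid of ${\mathcal A}(s)$, and it is the step that really uses the asymmetric shape of the wedge $(\wedge_{m-1}{\mathcal C}_0(s))\wedge {\mathcal C}_1(s)$ rather than $\wedge_m{\mathcal C}_0(s)$.
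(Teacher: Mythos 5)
Your proposal is correct and takes essentially the same route as the paper: you verify block-by-block that $(\wedge_{m-1}{\mathcal C}_0(s))\wedge{\mathcal C}_1(s)$ satisfies the $s$-, $(ms-1)$- and $(ms+1)$-core conditions (via the pyramid structure of Corollary \ref{pbase} and Lemma \ref{pyra}, the inclusion ${\mathcal C}_1(s)\subseteq{\mathcal C}_0(s)$ of Lemma \ref{csub}, and the wrap-around of the leftmost block into the rightmost ${\mathcal C}_1(s)$), and you deduce maximality of length from the containment of every such core's minimal $ms$-abacus in ${\mathcal E}^-_m(s)\cap{\mathcal E}^+_m(s)$, identified with the wedge by the preceding lemma. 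The only (cosmetic) difference is the final step: you count beads of sub-abaci of the intersection, whereas the paper argues that any bead of ${\mathcal E}^{\pm}_m(s)$ outside the intersection would create an $(ms-1)$- or $(ms+1)$-hook.
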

\begin{proof} It is enough to show that ${\mathcal L}_m(s)$ is an $(s,ms-1,ms+1)$-core, and that the inclusion of beads in any other abacus positions in ${\mathcal E}^-_m(s)$ or ${\mathcal E}^+_m(s)$ will violate the $(ms-1)$ or $(ms+1)$-core condition. To see it is an $s$-core, we consider a bead in three abacus positions: in the rightmost ${\mathcal C}_1(s)$, the leftmost ${\mathcal C}_0(s)$, or one of the $m-2$ wedge-copies of ${\mathcal C}_0(s)$ in the middle. For a bead in the rightmost ${\mathcal C}_1(s)$; by Lemma \ref{csub}, there is a bead $s$-positions to the left and in the same row, since ${\mathcal C}_1(s)$ is a sub-abacus of ${\mathcal C}_0(s)$.  The same argument applies to beads in the middle $m-2$ copies of ${\mathcal C}_0(s).$  Suppose a bead is in the leftmost copy of ${\mathcal C}_0(s)$ with abacus position $(i,j)$, where $j>0$. Then it is enough that $((ms-1)-i-1,j-1)\in {\mathcal L}_m(s).$  This follows by the construction of ${\mathcal C}_1(s)$, and the projection map $\pi_{m-1}$.

To see that ${\mathcal L}_m(s)$ is an $(ms-1,ms+1)$-core, it is enough to use the projection maps $\pi_{\ell}$ for $0\leq \ell\leq m-1$, and the Lemma \ref{pyra}. Finally, to see that ${\mathcal L}_m(s)$ is longest such, consider the inclusion of a bead in an $ms$-abacus position in ${\mathcal E}^+_m(s)$ or ${\mathcal E}^-_m(s)$ but outside of ${\mathcal E}^-_m(s)\cap{\mathcal E}^+_m(s)$. In this case, either an $(ms-1)$-hook or an $(ms+1)$-hook will arise from a spacer in either the position down-and-to-the-right, or down-and-to-the-left.
\end{proof}
{\scriptsize
\begin{figure}[h!]
\label{abacus789}
\begin{center}

\[
\begin{array}{ccccccccccccccc}
45 & 46 & 47 & 48 & 49 & 50 & 51 & 52 & 53 & 54 & 55 & 56 & 57 & 58 & 59\\
30 & 31 & 32 & 33 & 34 & 35 & 36 & 37 & 38 & 39 & 40 & 41 & 42 & 43 & 44\\
15 & 16 & \encircle{17} & \encircle{18} & 19 & 20 & 21 & \encircle{22} & \encircle{23} & 24 & 25 & 26 & \encircle{27} & 28 & 29\\
0 & \encircle{1} & \encircle{2} & \encircle{3} & \encircle{4} & 5 & \encircle{6} & \encircle{7} & \encircle{8} & \encircle{9} & 10 & \encircle{11} & \encircle{12} & \encircle{13} & 14\\
\end{array} 
\] 

\caption{${\mathcal L}_3(5)={\mathcal E}_3^-(5)\cap{\mathcal E}_3^+(5)$}

\end{center}
\end{figure}
} 
\begin{proof}[Proof of Theorem \ref{m2}] 
By Corollary \ref{pbase} and Lemma \ref{wedgecap} we can describe ${\mathcal L}_m(s)$ as union of pyramid-abaci with bases $[1,s-1]$, $[s+1,2s-1]$, $[2s+1,3s-1],\cdots,[(m-1)s+1,ms-2]$. This uniquely determines the placement of beads in abacus positions each row $j$. In light of the structure described above, it is clear that the total number of spacers in the first $j+1$ rows (for a particular $j$) is given by 
\begin{eqnarray*}
\sum_{i=0}^{j} ((2i+1)m+1) 
&=& 
\frac{2mj(j+1)}{2} + (m(j+1) + (j+1)) \\
&=& 
m(j+1)^2 + (j+1)
\end{eqnarray*}
after elementary simplification.  

Thus, the contribution to the weight of this particular core at row $j$ is given by 
\begin{eqnarray}
\label{summation1}
&&
\sum_{\ell=0}^{m-2} ((mj^2 + j) + (j+1+\ell(2j+1))(s-2(j+1)) + \notag \\
&&
 \sum_{\ell=m-1}^{m-1} ((mj^2 + j) + (j+1+\ell(2j+1))(s-2(j+2)) \notag \\
&=& 
(m-1)(mj^2+2j+1)(s-(2j+1)) + (2j+1)(s-(2j+1))\left( \frac{(m-2)(m-1)}{2} \right) \notag \\
&& 
+ m(j+1)^2(s-(2j+2)) \notag \\
&=& \frac{(s-(2j+1))(m-1)m}{2}\left( 2j^2+2j+1 \right) + m(j+1)^2(s-(2j+2))
\end{eqnarray}
using elementary summation properties and straightforward algebraic simplifications.  

In order to determine the total weight of this core, we simply sum (\ref{summation1}) above over all relevant rows of the abacus.  This yields 
\begin{eqnarray*}
&& \sum_{j=0}^{t-2}  \frac{(s-(2j+1))(m-1)m}{2}\left( 2j^2+2j+1 \right) + m(j+1)^2(s-(2j+2)) \\
&=&  
ms(t-1)\left(\frac{m(t-1)^2}{3} + \frac{m}{6} +\frac{t-1}{2}\right) - m(t-1)^2\left( \frac{m(t-1)^2+1}{2} +t-1\right)
\end{eqnarray*}
using well--known results on sums of integer powers.  Replacing $s$ by $2t-1$ or $2t-2$ yields the results of this theorem after elementary simplification.  
\end{proof}
\vskip .2in 
The weight of a {\it maximal} $(s-1, s, s+1)$-core partitions was obtained by Amdeberhan-Leven (Theorem 4.3, \cite{AL}), Yang-Zhang-Zhou (Corollary 3.5, \cite{Y-Z-Z}) and Xiong (Corollary 1.2, \cite{X}). When $m=1$, the weight of a maximal $(s, ms-1,ms+1)$-core partition, agrees with the weight of the longest $(s, ms-1, ms+1)$.  This agreement leads us to the following conjecture.
\begin{conjecture}\label{Berger} The size of a maximal $(s,ms-1,ms+1)$-core is
\begin{enumerate}
\item $\frac{m^2t(t-1)(t^2-t+1)}{6}$ if $s=2t-1$\\
\item $\frac{m^2(t-1)^2(t^2-2t+3)}{6} - \frac{m(t-1)^2}{2}$ if $s=2t-2.$
\end{enumerate}
There are two such maximal partitions; one corresponding to ${\mathcal L}(s)$, and one corresponding to its conjugate.
\end{conjecture}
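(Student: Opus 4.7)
The plan is to reduce the weight-maximization problem to a discrete optimization over admissible bead-sets and then attack it with a monotonicity argument. By Proposition \ref{And} applied in turn to $t = ms - 1$ and $t = ms + 1$, the minimal $ms$-abacus $X$ of any $(s, ms-1, ms+1)$-core is a sub-abacus of both $\mathcal{E}^-_m(s)$ and $\mathcal{E}^+_m(s)$, hence of $\mathcal{L}_m(s)$, and in addition is closed under the two ``chase-and-wrap'' rules of Proposition \ref{And} coming from $(ms-1)$- and $(ms+1)$-coreness. Call such a sub-abacus \emph{admissible}. The central computational tool is the identity
\[
|\lambda| \;=\; \sum_{x \in X} x \;-\; \binom{|X|}{2},
\]
obtained from Lemma \ref{onebead} by rewriting the number of spacers less than $x$ as $x$ minus the number of beads less than $x$ and summing over $x \in X$. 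In particular, adding a bead at position $y$ to an admissible set of size $n$ changes $|\lambda|$ by exactly $y - n$, giving a direct handle on how the weight responds to local modifications of the bead-set.

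With this identity in hand, I would establish the following monotonicity lemma: if $X \subsetneq \mathcal{L}_m(s)$ is admissible and is not the bead-set of the conjugate $(\mathcal{L}_m(s))'$, then there exists $y \in \mathcal{L}_m(s) \setminus X$ such that adjoining $y$ together with whatever further beads the admissibility closure then requires strictly increases $\sum x - \binom{|X|}{2}$. Iterating this lemma pushes any admissible $X$ up to either $\mathcal{L}_m(s)$ or to the conjugate's bead-set. The pyramid decomposition (Corollary \ref{pbase}) and the wedge decomposition (Lemma \ref{wedgecap}) of $\mathcal{L}_m(s)$ organize the analysis of these forced cascades block-by-block. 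For the second maximizer, one uses that conjugation is a weight- and coreness-preserving involution: since $\mathcal{L}_m(s)$ realizes the largest possible number of parts by construction, its conjugate has a strictly smaller number of parts and hence a different bead-set, unless $\mathcal{L}_m(s)$ happens to be self-conjugate (in which case the two maximizers coincide).

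The main obstacle is the monotonicity lemma itself. Although the identity reduces the problem to tracking contributions of the form $y - n$, the admissibility closure can force several beads, including some at small positions, to be added or removed together, so a greedy ``add the largest missing bead'' strategy is not immediately viable. Carefully controlling these cascades, and ruling out admissible $X$ that are neither $\mathcal{L}_m(s)$ nor the conjugate's bead-set yet are ``locally maximal,'' is the principal technical hurdle. A natural route is induction on $m$: the base case $m=1$ is the known weight formula for maximal $(s-1, s, s+1)$-cores established in \cite{AL}, \cite{X}, and \cite{Y-Z-Z}, and the wedge decomposition $\mathcal{L}_m(s) = (\wedge_{m-1}\mathcal{C}_0(s)) \wedge \mathcal{C}_1(s)$ furnishes a natural scaffolding for passing from $m$ to $m+1$ by appending an additional $\mathcal{C}_0(s)$-block and analyzing how the closure cascades interact across the new wedge boundary.
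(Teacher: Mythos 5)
You should first note that the paper does not prove this statement at all: it is Conjecture~\ref{Berger}, explicitly left open (the acknowledgments even credit Aaron Berger with ``progress'' on it). What the paper actually proves is Theorem~\ref{m2}, the weight of the \emph{longest} $(s,ms-1,ms+1)$-core, computed by summing bead contributions row by row over the pyramid/wedge decomposition of $\mathcal{L}_m(s)=(\wedge_{m-1}\mathcal{C}_0(s))\wedge\mathcal{C}_1(s)$; the conjecture is precisely the additional claim that no other $(s,ms-1,ms+1)$-core has larger weight, motivated only by agreement with the known maximal $(s-1,s,s+1)$-core weight at $m=1$. So there is no proof of the paper's to match your proposal against, and your proposal should be judged as an attempt at the open problem.

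Judged that way, it has a genuine gap: the ``monotonicity lemma'' you isolate is not a technical hurdle left to routine verification, it \emph{is} the open content of the conjecture, and your sketch gives no mechanism for proving it. The identity $|\lambda|=\sum_{x\in X}x-\binom{|X|}{2}$ is correct, but it immediately shows why a greedy or exchange argument is delicate: adjoining a bead at position $y$ to an $n$-bead admissible set changes the weight by $y-n$, which is negative exactly for the low positions that the $(ms\pm1)$-core closure rules tend to force, so ``add beads and cascade'' moves are not weight-increasing in general. Moreover the conjectured existence of a second maximizer (the conjugate of $\mathcal{L}_m(s)$'s partition, whose minimal $ms$-abacus is itself a proper sub-abacus of $\mathcal{L}_m(s)$ of equal weight) shows that weight is not strictly increasing along any chain of admissible sub-abaci up to $\mathcal{L}_m(s)$; any argument must therefore handle plateaus and rule out other locally maximal admissible configurations, which your iteration scheme does not address. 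Your fallback induction on $m$ is also not set up: passing from $m$ to $m+1$ changes the modulus from $ms$ to $(m+1)s$, so the admissible sub-abaci for the two problems live on different abaci and the $m=1$ base case from \cite{AL}, \cite{X}, \cite{Y-Z-Z} does not transfer through the wedge construction without a new comparison argument (note the target weight is quadratic in $m$). Finally, your caveat that the two maximizers ``coincide'' if $\mathcal{L}_m(s)$ is self-conjugate contradicts the statement you are trying to prove, which asserts two maximal partitions, so that case would need to be excluded rather than absorbed.
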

If Conjecture \ref{Berger} is true, then we have the following elegant corollary.
\begin{corollary} \label{3} Let $s$ be even. The weight of the maximal $(s,ms-1,ms+1)$-core partition is divisible by $m^2.$
\end{corollary}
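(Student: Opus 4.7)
The plan is to substitute the formula from Conjecture~\ref{Berger} and simplify. Since $s$ is even, write $s=2t-2$ and set $u:=t-1=s/2\in\mathbb{Z}_{\ge 0}$. By case~(2) of the conjecture, the weight is
\[
E \;=\; \frac{m^{2}u^{2}(u^{2}+2)}{6} \;-\; \frac{mu^{2}}{2} \;=\; \frac{mu^{2}\bigl[m(u^{2}+2)-3\bigr]}{6}.
\]
The immediate task is to exhibit $E=m^{2}\cdot N$ for an integer $N$. Isolating
\[
\frac{E}{m^{2}} \;=\; \frac{u^{2}\bigl[m(u^{2}+2)-3\bigr]}{6m},
\]
the claim becomes $6m \bigm| u^{2}\bigl[m(u^{2}+2)-3\bigr]$, which I would prove in two stages: a reduction modulo $m$, where the bracketed expression collapses to $-3u^{2}$ and so forces $m\mid 3u^{2}$, and a separate reduction modulo~6, which is handled by elementary parity arguments on $u^{2}(u^{2}+2)$ together with the $-3u^{2}$ correction.

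An alternative route, more in the spirit of the paper and likely cleaner, is to extract the $m^{2}$-divisibility directly from the geometric decomposition
\[
{\mathcal L}_{m}(s) \;=\; \bigl(\wedge_{m-1}{\mathcal C}_{0}(s)\bigr)\wedge {\mathcal C}_{1}(s)
\]
established in Section~5. Each of the $m$ blocks in this wedge occupies runners $\ell s + i$ for $0\le\ell\le m-1$, so by Definition~\ref{index} and Lemma~\ref{onebead} every bead in the $\ell$-th block contributes a part whose size carries the offset $\ell s$. Summing the weights across the $m$ blocks produces the factor $\sum_{\ell=0}^{m-1}\ell = \binom{m}{2}$, furnishing one factor of $m$, while the near-uniform number of beads per block furnishes a second factor of $m$, yielding $m^{2}\mid E$.

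The main obstacle, in either approach, is dealing with the asymmetry between the $m-1$ copies of ${\mathcal C}_{0}(s)$ and the single trailing ${\mathcal C}_{1}(s)$: this asymmetry is exactly what produces the correction term $-\tfrac{m(t-1)^{2}}{2}$ in case~(2) of the conjecture. I would attack it via Lemma~\ref{tippy}, which describes precisely the beads present in ${\mathcal C}_{0}(s)$ but absent from ${\mathcal C}_{1}(s)$, and show that when $s$ is even the resulting ``missing weight'' is itself a multiple of $m^{2}$. Combining this with the block-wise $m^{2}$-divisibility from the previous paragraph then closes the argument.
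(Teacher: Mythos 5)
Your algebraic reduction is exactly where the attempt collapses, and it collapses for a structural reason: the divisibility you are left to prove, namely $6m \mid u^2\bigl[m(u^2+2)-3\bigr]$ with $u=s/2$, is false in general, so neither of your two routes can be completed. Reducing the bracket modulo $m$ leaves $-3u^2$, so you would need (essentially) $m\mid 3u^2$, which you never establish and which fails for small cases. Concretely, take $m=2$, $s=6$ (so $t=4$, $u=3$): case (2) of Conjecture~\ref{Berger} (equivalently Theorem~\ref{m2}(2)) gives weight $\frac{4\cdot 9\cdot 11}{6}-\frac{2\cdot 9}{2}=66-9=57$, which is odd and hence not divisible by $m^2=4$; a direct check on the $12$-abacus ${\mathcal L}_2(6)=(\wedge_{1}{\mathcal C}_0(6))\wedge{\mathcal C}_1(6)$ confirms that the longest $(6,11,13)$-core really has weight $57$. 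Similarly $m=3$, $s=4$ gives $36-6=30$, not divisible by $9$. So the statement, read literally with ``$s$ even'' and the case (2) formula, does not follow from the conjecture at all; the $m^2$-divisibility is immediate only in case (1), $s=2t-1$ odd, where the weight is $m^2\cdot\frac{t(t-1)(t^2-t+1)}{6}$ and the cofactor is an integer (check divisibility of $t(t-1)(t^2-t+1)$ by $2$ and by $3$). The paper itself offers no argument beyond substitution into Conjecture~\ref{Berger}, and the corollary as printed appears to have ``even'' where only ``odd'' is supported by the formula; your own reduction, pushed one step further, exposes precisely this.

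Your second, geometric route does not rescue the even case either. The heuristic that the $m$ wedge-blocks contribute a factor $\sum_{\ell=0}^{m-1}\ell=\binom{m}{2}$ while ``near-uniform'' bead counts contribute another factor of $m$ is not how the weight decomposes: by Lemma~\ref{onebead} each part is a count of spacers to the left of a bead, and these counts mix contributions across all blocks, so the weight is not a blockwise sum of the form you describe. Moreover, the asymmetry you correctly single out --- the trailing ${\mathcal C}_1(s)$ versus the $m-1$ copies of ${\mathcal C}_0(s)$ --- produces exactly the correction term $-\frac{m(t-1)^2}{2}$, which is divisible by $m$ but not by $m^2$ in general; that is the opposite of the ``missing weight is a multiple of $m^2$'' claim your plan requires. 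If you want a statement you can actually prove, replace ``$s$ even'' by ``$s$ odd,'' and then the one-line substitution into case (1) of the conjecture suffices.
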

\section{$ms$-abaci of self-conjugate $(s,ms\pm1)$-core partitions with distinct parts}
We close this paper by applying our tools to prove results on self-conjugate simultaneous core partitions with distinct parts.
The following is a well-known lemma.
\begin{lemma} \label{2core} The 2-core partitions are exactly those of the form $(k,k-1,k-2,\cdots,1)$. The bead-sets of the 2-cores are of the form $\{\cup_{\ell\leq 0}2\ell+1\}$.
\end{lemma}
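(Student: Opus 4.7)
The plan is to specialize the $s$-core criterion of Lemma \ref{score} to the case $s=2$, where the 2-abacus has exactly two runners, $i=0$ and $i=1$. The 2-core condition says that whenever $(i,j)\in\mathcal{S}$ with $j>0$, one also has $(i,j-1)\in\mathcal{S}$; equivalently, each runner is packed from the bottom up with no gaps. So a 2-core is determined by the heights of its two runners.

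Next I would invoke the minimality convention. In the minimal 2-abacus the position $0=(0,0)$ is a spacer by definition, so runner $0$ has a spacer at its bottom; by the no-gaps condition runner $0$ must then be entirely empty. Hence every bead lies on runner $1$, and the no-gaps condition forces those beads to occupy exactly the consecutive odd positions $1,3,5,\ldots,2k-1$ for some $k\geq 0$. This establishes the claimed form of the bead-set.

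To recover the partition itself, I would apply Lemma \ref{onebead}: the bead at position $2\ell+1$ sits above exactly $\ell+1$ spacers (namely the even integers $0,2,\ldots,2\ell$), so by that lemma it corresponds to a part of size $\ell+1$. Letting $\ell$ run from $0$ up to $k-1$ and reading the parts in decreasing order yields the staircase $(k,k-1,\ldots,2,1)$, and conversely every such staircase is produced by the odd-integer bead-set of length $k$. No step here is truly an obstacle; the lemma is classical, and the argument is just a direct unwinding of Lemma \ref{score} and Lemma \ref{onebead} in the two-runner case.
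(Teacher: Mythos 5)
Your proof is correct. The paper states this lemma as ``well-known'' and gives no proof at all, so there is nothing to compare against; your argument --- specializing Lemma \ref{score} to the two-runner abacus, using minimality to empty runner $0$, and reading off the parts via Lemma \ref{onebead} --- is exactly the kind of self-contained abacus unwinding consistent with the paper's toolkit. One small point worth noting: the bead-set claim holds as you prove it for \emph{minimal} bead-sets (non-minimal bead-sets of a 2-core, such as $\{0,2\}$ for $\lambda=(1)$, need not consist of odd numbers), and the paper's ``$\ell\leq 0$'' is evidently a typo for a range of nonnegative $\ell$; your reading and treatment of both issues is the right one.
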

\begin{lemma} \label{axis} Let $X$ be a bead set of a self-conjugate partition. Then there exists a half-integer $\theta$ such that if $x\in X$ and $x>\theta$ then there exists a $y\not\in X$ such that $|y-\theta|=|x-\theta|.$
\end{lemma}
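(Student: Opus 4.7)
The plan is to reduce the statement to the classical complementarity property of the first-column hook lengths of $\lambda$ and $\lambda'$. Let $\lambda$ be self-conjugate with $n$ parts (so $\lambda_1 = n$), and set $M := \lambda_1 + n - 1 = 2n - 1$. The minimal bead-set $X_{\min} = \{h_{i1}\}_{i=1}^{n}$ lies in $\{0, 1, \ldots, M\}$. The key auxiliary identity I would establish first is the classical
\[
\{h_{j1}(\lambda')\}_{j=1}^{\lambda_1} \;=\; \{\, M - y : y \in \{0,1,\ldots,M\} \setminus X_{\min} \,\},
\]
i.e., the first-column hook lengths of $\lambda$ and the reflected first-column hook lengths of $\lambda'$ form complementary subsets of $\{0, 1, \ldots, M\}$. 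Specializing to $\lambda = \lambda'$ yields the reflection symmetry
\[
x \in X_{\min} \ \Longleftrightarrow\ M - x \notin X_{\min}, \qquad 0 \leq x \leq M.
\]

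Given this symmetry, the minimal-bead-set case of the lemma follows by taking $\theta := M/2 = n - 1/2$, which is a half-integer: for any $x \in X_{\min}$ with $x > \theta$, the reflected point $y := 2\theta - x = M - x$ lies in $\{0, 1, \ldots, M\}$, satisfies $y \notin X_{\min}$ by the symmetry, and has $|y - \theta| = |x - \theta|$ by construction.

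For a general (non-minimal) bead-set $X = \{0, 1, \ldots, k-1\} \cup \{h + k : h \in X_{\min}\}$ obtained by adjoining $k \geq 0$ leading beads, I take $\theta := (n - 1/2) + k = |X| - 1/2$, again a half-integer, and reduce to the minimal case by subtracting $k$: any $x \in X$ with $x > \theta$ must be of the form $x = h + k$ with $h \in X_{\min}$ and $h > n - 1/2$, and then $y := 2\theta - x = (M - h) + k$ satisfies $y \notin X$ precisely because $M - h \notin X_{\min}$ by the minimal case, while $|y - \theta| = |x - \theta|$ by construction.

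The only substantive step is the complementarity identity for the first-column hook lengths of $\lambda$ and $\lambda'$; this is the main obstacle. I expect to establish it via the standard bijection between bead-sets and lattice paths along the boundary of the Young diagram (horizontal steps coding beads, vertical steps coding spacers): conjugation of $\lambda$ reverses the boundary path and swaps the two step types, which translates exactly into the reflect-and-complement operation on bead-sets. Once that bijective picture is verified, the remainder of the argument is pure bookkeeping.
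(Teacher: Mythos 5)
Your proof is correct, but note that the paper does not give an internal argument for this lemma at all: its entire ``proof'' is a pointer to Corollary 3.4 of \cite{R90}, where the symmetry of the bead--spacer sequence of a self-conjugate partition about the axis $\theta=|X|-\tfrac{1}{2}$ is established. So you are not reproducing the paper's route; you are supplying the proof the paper outsources. Your route is the standard one and checks out: the complementation identity you isolate (the first-column hook lengths of $\lambda$ and the reflected first-column hook lengths of $\lambda'$ are complementary in $\{0,1,\ldots,M\}$ with $M=h_{11}=\lambda_1+n-1$) is classical, and specializing to $\lambda=\lambda'$ gives exactly the reflect-complement symmetry $x\in X_{\min}\Leftrightarrow M-x\notin X_{\min}$; your choices $\theta=M/2=n-\tfrac{1}{2}$ in the minimal case and $\theta=|X|-\tfrac{1}{2}$ after adjoining $k$ low beads (using the paper's definition $X=\{0,\ldots,k-1\}\cup(X_{\min}+k)$) are the same axis as in the cited reference, and the shift-by-$k$ bookkeeping is accurate, with the produced spacer $y=2\theta-x$ always a nonnegative position. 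The one step you leave as a sketch, the complementation identity itself, is genuinely classical (the 01-word/boundary-path encoding under which conjugation reverses the word and exchanges beads with spacers; it is available in Olsson's notes \cite{O}, which the paper already cites), so it is a citation-level gap rather than a mathematical one. In short, the paper buys brevity by citing \cite{R90}; your version buys self-containedness at the cost of importing (or re-proving) one standard fact about conjugation of beta-sets.
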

\begin{proof} See Corollary 3.4 in \cite{R90}.
\end{proof}
\begin{lemma} \label{2conj} The self-conjugate partitions with distinct parts are exactly the $2$-core partitions.
\end{lemma}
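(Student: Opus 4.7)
My plan is to prove both directions. The ``if'' direction is immediate from Lemma \ref{2core}: a $2$-core has the form $(k, k-1, \ldots, 1)$, which is self-conjugate (the Young diagram is a staircase symmetric across the main diagonal) and visibly has distinct parts.

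For the converse, I will show that the minimal bead-set $X$ of a self-conjugate partition $\lambda$ with distinct parts must equal $\{1, 3, 5, \ldots, 2k-1\}$ for some $k \geq 0$, so that Lemma \ref{2core} then identifies $\lambda$ as a $2$-core. First I translate the two hypotheses into statements about $X$. Using the identity $h_{i,1} - h_{i+1,1} = \lambda_i - \lambda_{i+1} + 1$ for consecutive first-column hook lengths, the distinct-parts hypothesis says no two elements of $X$ are consecutive integers. By Lemma \ref{axis}, self-conjugacy provides a half-integer axis $\theta$ such that reflection $\sigma(x) = 2\theta - x$ sends each bead of $X$ lying above $\theta$ to a spacer lying below $\theta$.

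The key step is pinning down $\theta$. Let $M = \max X$. Since $\sigma$ is an involution, it gives a bijection between beads of $X$ in $(\theta, \infty)$ and spacers in $[0, \theta)$; combined with minimality of $X$ (so $0$ is a spacer) and the fact that $M$ is the largest bead (so no beads lie beyond $M$), this forces $\sigma(M) = 0$, whence $\theta = M/2$ with $M$ odd. Consequently $|X \cap \{0, 1, \ldots, M\}| = (M+1)/2$, and this count must be realized as a subset of $\{1, 2, \ldots, M\}$ (since $0 \notin X$) of size $(M+1)/2$ containing no two consecutive integers.

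The proof is then closed by a routine count: the number of subsets of $\{1, \ldots, M\}$ of size $(M+1)/2$ avoiding consecutive integers equals $\binom{M - (M+1)/2 + 1}{(M+1)/2} = 1$, and the unique such subset is the odd integers $\{1, 3, \ldots, M\}$, matching the bead-set format of Lemma \ref{2core}. The main obstacle will be the axis-pinning step: Lemma \ref{axis} states the symmetry only in one direction, so I will need to argue carefully, using minimality of $X$ together with the finiteness of the bead set, that $\theta$ is forced to equal $M/2$. The final counting step is then standard.
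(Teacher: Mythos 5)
Your overall route is genuinely different from the paper's, and the counting skeleton is sound, but the step you yourself flag as the ``main obstacle'' is a real gap as proposed. Lemma \ref{axis}, as stated, is one-directional and only asserts the \emph{existence} of some half-integer $\theta$; it is satisfied vacuously by every $\theta$ larger than $M=\max X$ (no beads lie above such a $\theta$), so no amount of care with minimality and finiteness of $X$ can force the $\theta$ supplied by the lemma to equal $M/2$. Likewise, your assertion that $\sigma(x)=2\theta-x$ gives a \emph{bijection} between beads in $(\theta,\infty)$ and spacers in $[0,\theta)$ does not follow from the involution property together with the one-directional statement: those only yield an injection of beads-above-$\theta$ into spacers-below-$\theta$, hence an inequality, not the count $|X|=(M+1)/2$ that your final extremal argument needs. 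What you actually need is the two-directional symmetry of the minimal bead-set of a self-conjugate partition (for $0\le x\le M$, $x\in X$ if and only if $2\theta-x\notin X$), which is the true content of Corollary 3.4 of \cite{R90}. Granting that, your chain works: $\theta=M/2$ with $M$ odd, $|X|=(M+1)/2$, and the unique subset of $\{1,\dots,M\}$ of size $(M+1)/2$ with no two consecutive elements is $\{1,3,\dots,M\}$, so Lemma \ref{2core} finishes the argument (treat the empty partition separately, since $M$ is then undefined).

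For comparison, the paper argues locally rather than globally: two consecutive beads contradict distinct parts, while two consecutive spacers on one side of $\theta$ reflect to two consecutive beads on the other side (and a pair straddling $\theta$ contradicts self-conjugacy), so the minimal bead-set must alternate spacer and bead, giving $\{1,3,5,\dots\}$ directly. Note that the paper's proof also uses the stronger, biconditional reading of Lemma \ref{axis} when it sends spacers to beads, so your approach is not deficient on that score; it simply trades the paper's local alternation argument for a global one (pin the axis, count the beads, apply an extremal count of non-consecutive subsets). If you replace the ``forcing'' step by an explicit appeal to the biconditional symmetry for the minimal bead-set, your proof is complete and is a legitimate alternative to the paper's.
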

\begin{proof} Every 2-core partition is clearly a self-conjugate partitions with distinct parts. Now suppose we have a self-conjugate partition $\lambda$ with distinct parts.  Then it must have a bead-set $X$ that consists of alternating spacer-and-beads.  Suppose not. If two beads occur in a row, we know that it violates having distinct parts. Suppose two spacers occur in a row. If $y,y+1\not\in X$ and both $y,y+1<\theta$ or both $y,y+1>\theta$ then, by Lemma \ref{axis}, there will be two beads in succession on the other side of $\theta$. If $y<\theta$ and $\theta<y+1$, then by Lemma \ref{axis} $\lambda$ is not self-conjugate.
\end{proof}
With the results of the previous sections and the lemmas above, we can consider self-conjugate simultaneous core partitions with distinct parts.
\begin{proposition} \label{Fstar} The number $F_*(s)$ of self-conjugate $(s,s+1)$-core partitions with distinct parts obeys the following relations:
$F_*(1)=1, F_*(2)=2$ and $F_*(2\alpha)=F_*(2\alpha+1)=\alpha+1$, where $\alpha\geq 1.$
\end{proposition}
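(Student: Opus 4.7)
The plan is to combine Lemmas \ref{2core} and \ref{2conj} to reduce the problem to a very transparent enumeration. By Lemma \ref{2conj} every self-conjugate partition with distinct parts is a $2$-core, and by Lemma \ref{2core} the $2$-cores are exactly the staircase partitions $\lambda_k := (k, k-1, \ldots, 1)$ for $k \geq 0$ (taking $k = 0$ as the empty partition). Hence $F_*(s)$ equals the number of integers $k \geq 0$ for which $\lambda_k$ is an $(s,s+1)$-core.

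Next, I would describe $\lambda_k$ on the $s$-abacus. A direct computation of first-column hook lengths gives $h_{i,1}(\lambda_k) = 2(k-i)+1$ for $1 \leq i \leq k$, so the minimal bead-set of $\lambda_k$ is $\{1,3,5,\ldots,2k-1\}$. In the $s$-abacus these beads occupy the positions $(1,0),(3,0),(5,0),\ldots$, all in the first row and with no two at consecutive indices. Accordingly Lemma \ref{stinct} (distinct parts) is automatic, and by Lemma \ref{topS} together with Proposition \ref{And}, $\lambda_k$ is an $(s,s+1)$-core if and only if its bead-set is a sub-abacus of $\mathcal{A}(s)$, whose first row is $\{(1,0),(2,0),\ldots,(s-1,0)\}$. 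This happens precisely when $2k-1 \leq s-1$, i.e., $k \leq \lfloor s/2 \rfloor$.

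Counting yields $F_*(s) = \lfloor s/2 \rfloor + 1$, which immediately gives $F_*(1)=1$, $F_*(2)=2$, and $F_*(2\alpha)=F_*(2\alpha+1)=\alpha+1$ for $\alpha \geq 1$, as claimed. The only nontrivial point to check is the identification of the bead-set of a staircase, but that is a routine first-column hook computation; everything else follows directly from the abacus machinery already developed in Sections 2 and 4, so I do not anticipate a real obstacle.
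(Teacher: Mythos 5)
Your proposal is correct and takes essentially the same route as the paper: both reduce via Lemma \ref{2conj} (and Lemma \ref{2core}) to counting which staircase $2$-cores can be accommodated in the first row of ${\mathcal A}(s)$, giving $\lfloor s/2\rfloor+1$. One small caveat: Proposition \ref{And} only supplies the ``only if'' half of your stated equivalence (core $\Rightarrow$ sub-abacus of ${\mathcal A}(s)$); the ``if'' half is not a cited result, but it is immediate in your situation because every hook of $(k,k-1,\dots,1)$ has length at most $2k-1\le s-1<s$.
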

\begin{proof} There is only one $(1,2)$-core, the empty partition. There are two $(2,3)$-cores, the empty partition and the partition $\lambda=(1).$ Suppose $n=2\alpha$. Then the self-conjugate $(2\alpha,(2\alpha)m+1)$-cores with distinct parts will be, by Lemma \ref{2conj}, the empty set plus the 2-cores that can be accommodated as sub-abaci of $\cup(i,0)$ where $1\leq i\leq 2\alpha-1.$ There are $\alpha$ such cores, and this number remains unchanged if $s=2\alpha+1.$
\end{proof}
\begin{proposition} \label{scEminus} The number $E^-_{m,*}(s)$ of self-conjugate $(s,ms-1)$-cores with distinct parts obeys the following relations:
$E^-_{m,*}(1)=1$, $E^-_{m,*}(2)=m$ and 
\begin{enumerate}
\item $E^-_{m,*}(2\alpha)=m\alpha$ and
\item $E^-_{m,*}(2\alpha+1)=\alpha+1$
\end{enumerate}
for all $m\geq 1$ and $\alpha \geq 1$.
\end{proposition}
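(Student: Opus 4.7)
The plan is to combine three ingredients already established: Lemma \ref{row}(2), which says the minimal $ms$-abacus of any $(s,ms-1)$-core with distinct parts is a sub-abacus of $\mathcal{E}_m^-(s)$ with beads only in its first row; Lemma \ref{2conj}, which characterizes self-conjugate partitions with distinct parts as the 2-cores; and Lemma \ref{2core}, according to which a 2-core has bead set of the form $\{1,3,5,\ldots,2k-1\}$ for some $k\geq 0$. Together these reduce the problem to a purely combinatorial question: for how many $k\geq 0$ does $\{1,3,\ldots,2k-1\}$ embed into the first row of $\mathcal{E}_m^-(s)$?

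The first step is to identify the first-row positions of $\mathcal{E}_m^-(s)$ explicitly. Unpacking the definition (or equivalently applying Lemma \ref{BAM}(1) to the first rows of the constituent copies of $\mathcal{B}_0(s)$ and $\mathcal{B}_1(s)$), these positions are exactly
\[
\{1,2,\ldots,ms-2\}\setminus\{s,2s,\ldots,(m-1)s\}.
\]
Thus we must count $k\geq 0$ with $\{1,3,\ldots,2k-1\}$ contained in this set.

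The count then splits on the parity of $s$. If $s=2\alpha$, every multiple $js$ is even and hence automatically avoids the odd bead values; the only constraint is $2k-1\leq ms-2$, forcing $k\leq m\alpha-1$. This gives $m\alpha$ choices of $k$ and proves part (1), with $E^-_{m,*}(2)=m$ being the $\alpha=1$ special case. If $s=2\alpha+1$, then $s$ itself is the smallest odd element of the forbidden list, and avoiding it forces $2k-1<s$, i.e., $k\leq \alpha$; the larger odd multiples $3s,5s,\ldots$ are then automatically beyond $2k-1$, while the upper bound $2k-1\leq ms-2$ holds trivially. This yields $\alpha+1$ choices and proves part (2); the initial value $E^-_{m,*}(1)=1$ is recovered by setting $\alpha=0$, for which the set of allowable bead positions is empty and only $k=0$ remains.

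The main obstacle, such as it is, is bookkeeping: one should confirm that every containment-admissible bead set $\{1,3,\ldots,2k-1\}$ actually defines a self-conjugate $(s,ms-1)$-core with distinct parts. But this is immediate from the paper's framework — the $(s,ms-1)$-core property follows from Proposition \ref{And}, distinctness of parts follows from the alternating spacer/bead pattern of a 2-core via Lemma \ref{stinct}, and self-conjugacy is built into being a 2-core by Lemma \ref{2conj}. No techniques beyond the abacus machinery already developed are required.
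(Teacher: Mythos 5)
Your proof is correct and follows essentially the same route as the paper: both arguments reduce, via Lemma \ref{2conj}, Lemma \ref{2core} and Lemma \ref{row}(2), to counting which staircase bead sets $\{1,3,\ldots,2k-1\}$ fit among the first-row positions $\{1,\ldots,ms-2\}\setminus\{s,2s,\ldots,(m-1)s\}$ of ${\mathcal E}^-_m(s)$, and then split on the parity of $s$ exactly as you do, yielding $m\alpha$ and $\alpha+1$ respectively. One small repair to your final paragraph: Proposition \ref{And} only gives the necessary (containment) direction, so it cannot certify that each admissible staircase really is an $(s,ms-1)$-core; that sufficiency is nonetheless immediate, since every hook length of the staircase $(k,k-1,\ldots,1)$ is odd and at most $2k-1$, which rules out hooks of length $ms-1$ (because $2k-1\leq ms-2$) and of length $s$ (because $s$ is even in case (1), while $2k-1<s$ in case (2)).
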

\begin{proof} The argument is similar to Proposition \ref{Fstar}. There is only one $(1,m-1)$-core: the empty partition. There are $m$ self-conjugate $(2, 2m-1)$-cores, the empty set and the the partitions corresponding to $\cup^{m'}_{\ell=1}(2\ell-1,0)$, where $1\leq m'\leq m-1.$ This gives us the initial conditions. We consider separately the cases when $s$ is odd or even.
\begin{enumerate}
\item Suppose $s=2\alpha$, and $\alpha>0$. Then the self-conjugate $(2\alpha, (2\alpha)m-1)$-cores with distinct parts will be, by Lemma \ref{2conj} the empty set plus the 2-cores accommodated as sub-abaci of $\{\cup(i+(2\alpha)\ell,0)\cup(i'+(2\alpha)(m-1),0)\}$ as $0\leq i\leq 2\alpha-1$, $0\leq \ell\leq m-2$ and $0\leq i'\leq 2\alpha-2$. There are $m\alpha-1$ such 2-cores; when we count the empty partition we arrive at $m\alpha.$
\item Suppose $s=2\alpha+1$. Then the number of self-conjugate $(2\alpha,2\alpha+1)$-cores with distinct parts will be, by Lemma \ref{2conj}, the number of 2-cores that can be accommodated as sub-abaci of $\{\cup(i+(2\alpha+1)\ell,0)\cup(i'+(m-1)(2\alpha+1),0)\}$ as $1\leq i\leq 2\alpha$, $0\leq \ell\leq m-2$ and $1\leq i'\leq 2\alpha-1$. However, since $(2\alpha+1,0)\not\in {\mathcal E}^-_{m}(2\alpha+1)$, there are only $\alpha$ such non-empty 2-cores; those that can be accommodated from abacus positions $(i,0)$ where $1\leq i\leq 2\alpha.$
\end{enumerate}
\end{proof}
\begin{example} $F_*(8)=F_*(9)=5$. The set of self-conjugate $(8,9)$-core partitions with distinct parts is $\{\emptyset, (1), (2,1), (3,2,1), (4,3,2,1)\}$. This is also the set of self-conjugate $(9,10)$-core partitions with distinct parts. 
\end{example}
\begin{proposition} The number $E^+_{m,*}(s)$ of self-conjugate $(s,ms+1)$-cores obeys the following relations:
$E^+_{m,*}(1)=1$, $E^+_{m,*}(2)=m+1$ and 
\begin{enumerate}
\item $E^+_{m,*}(2\alpha)=m\alpha+1$ and
\item $E^+_{m,*}(2\alpha+1)=\alpha+1$
\end{enumerate}
for all $m\geq 1$ and $\alpha >1$.
\end{proposition}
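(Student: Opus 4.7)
The plan is to follow the template of Proposition \ref{scEminus} with $\mathcal{E}^-_m(s)$ replaced throughout by $\mathcal{E}^+_m(s) = \wedge_m \mathcal{A}(s)$, which is the description furnished by Lemma \ref{BAM}(2). By Lemma \ref{row}(3) every self-conjugate $(s, ms+1)$-core with distinct parts corresponds to a first-row sub-abacus of $\mathcal{E}^+_m(s)$, and by Lemma \ref{2conj} combined with Lemma \ref{2core} any such sub-abacus must have bead-set of the form $\{1, 3, 5, \ldots, 2k-1\}$ for some integer $k \geq 0$. Hence the enumeration reduces to counting those $k$ for which every odd integer in $\{1, 3, \ldots, 2k-1\}$ occurs as a first-row bead position of $\mathcal{E}^+_m(s)$.

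From Definitions \ref{AbS} and \ref{Oplus}, the first-row bead positions of $\mathcal{E}^+_m(s)$ are precisely the integers in $\{1, 2, \ldots, ms - 1\}$ that are not multiples of $s$. The initial cases are immediate: only the empty partition is available when $s = 1$; and when $s = 2$ the first-row bead positions are the odd integers $\{1, 3, \ldots, 2m - 1\}$, giving the $m+1$ staircase $2$-cores $\emptyset, (1), (2,1), \ldots, (m, m-1, \ldots, 1)$. For the general case I split on the parity of $s$, as in Proposition \ref{scEminus}. If $s = 2\alpha$, every multiple of $s$ is even, so every odd integer in $\{1, 3, \ldots, 2m\alpha - 1\}$ is an available bead position; these are $m\alpha$ integers, and $k$ may range over $\{0, 1, \ldots, m\alpha\}$, yielding $E^+_{m,*}(2\alpha) = m\alpha + 1$. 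If $s = 2\alpha + 1$, the smallest odd multiple of $s$ is $s$ itself, which is a spacer, forcing $2k - 1 < s$ and hence $k \leq \alpha$; conversely every such $k$ is permissible, giving $E^+_{m,*}(2\alpha + 1) = \alpha + 1$.

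There is no substantive obstacle here: once Lemmas \ref{row}(3), \ref{2conj}, \ref{2core}, and \ref{BAM}(2) are invoked, the proof is elementary bookkeeping. The only subtle point worth emphasising is why the even-$s$ count here is one larger than its $\mathcal{E}^-_m(s)$ analogue from Proposition \ref{scEminus}: because $\mathcal{E}^+_m(s)$ is a wedge of $m$ full copies of $\mathcal{A}(s)$ while $\mathcal{E}^-_m(s)$ truncates its last factor by removing the position $i = s - 1$ (which is odd when $s$ is even, by Lemma \ref{tippy}), the present computation retains one extra odd bead-position in the even case. The two counts coincide in the odd-$s$ case because the spacer at $s$ already caps the allowed $k$ before that truncation would become relevant.
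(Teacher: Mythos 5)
Your proposal is correct and takes essentially the same route as the paper: the paper's own (terse) proof also reduces to $2$-cores via Lemma \ref{2core} and Lemma \ref{2conj}, counts the staircases whose beads fit among the first-row positions of ${\mathcal E}^+_m(s)$ as in Propositions \ref{Fstar} and \ref{scEminus}, and notes that in the even case the extra partition occupying all odd positions up to $ms-1$ (available because ${\mathcal E}^+_m(s)=\wedge_m{\mathcal A}(s)$ does not truncate its last factor) must be counted, which is exactly what your parity argument captures. Your explicit identification of the first-row positions as the non-multiples of $s$ below $ms$ is just a slightly more detailed write-up of the same bookkeeping, at the same level of rigor as the paper.
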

\begin{proof} The argument in both cases is similar to ones above, with the added consideration that, for (1), the partition corresponding to $\{\cup^{\alpha}_{\gamma=1}\cup^{m-1}_{\ell=0}(2\gamma-1+\ell s,0)\}$ must also be counted.
\end{proof}
\begin{corollary} $E^-_{m,*}(2\alpha+1)=E^+_{m,*}(2\alpha+1).$
\end{corollary}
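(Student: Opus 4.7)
The plan is very short: the two preceding propositions each compute the odd case explicitly, giving
\[
E^-_{m,*}(2\alpha+1) = \alpha + 1 = E^+_{m,*}(2\alpha+1),
\]
so the corollary drops out as an immediate comparison. I would write the proof as a one-line appeal to Proposition \ref{scEminus}(2) and the preceding proposition's part (2).

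For a more conceptual presentation, I would instead give a unified bijective argument as follows. By Lemma \ref{2conj}, every self-conjugate partition with distinct parts is a 2-core, and by Lemma \ref{2core} the 2-cores are parametrized by a single integer $k \geq 0$, corresponding to the bead-set $\{1, 3, 5, \ldots, 2k-1\}$ (with $k=0$ giving the empty partition). By Lemma \ref{row}(2) (respectively Lemma \ref{row}(3)), a 2-core belongs to the $E^-_{m,*}$-count (respectively the $E^+_{m,*}$-count) precisely when its bead-set is contained among the first-row positions of ${\mathcal E}^-_m(s)$ (respectively ${\mathcal E}^+_m(s)$).

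The key observation is that when $s = 2\alpha + 1$ is odd, the first odd integer exceeding $s - 1 = 2\alpha$ is $s$ itself, and $s$ is forbidden as a first-row position in both ${\mathcal E}^-_m(s)$ and ${\mathcal E}^+_m(s)$ since it is a multiple of $s$ and the allowed first-row positions in each of these abaci take the form $\ell s + i$ with $1 \leq i \leq s-1$ (with the further restriction $1 \leq i \leq s-2$ only at $\ell = m-1$ in the $E^-$ case, which is irrelevant here). Hence in both settings the admissible 2-core bead-sets are exactly those with $2k-1 \leq 2\alpha$, i.e., $k \in \{0, 1, \ldots, \alpha\}$, yielding $\alpha+1$ cores in each case and proving the equality.

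There is essentially no obstacle to overcome: the hard work was already absorbed into the two propositions, and the present corollary merely records that the $\alpha+1$ formula they share in the odd case does not involve $m$.
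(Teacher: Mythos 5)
Your proposal is correct and matches the paper, which states this corollary without proof as an immediate comparison of the value $\alpha+1$ from Proposition \ref{scEminus}(2) and part (2) of the following proposition. Your supplementary bijective sketch is fine but simply re-derives the odd-case count already contained in those propositions (position $s$, being odd, is excluded from the first row, capping the admissible 2-cores at $k\leq\alpha$ in both settings), so the one-line appeal is all that is needed.
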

\begin{example} $E^-_{3,*}(5)=E^+_{3,*}(5)=3$. The set of self-conjugate $(5,14)$-cores with distinct parts is exactly $\{\emptyset, (1), (1,3)\}$, which is also the set of self-conjugate $(5,16)$-cores with distinct parts.
\end{example}
{\bf Acknowledgments} The first author was supported by PSC-CUNY Grant TRADA-46-493. The first author would like to thank Joe Gallian for the invitation to the University of Minnesota-Duluth in July 2016, where a portion of the manuscript was completed.  The authors have been in communication with Aaron Berger, who has made progress on Conjecture \ref{Berger}.
\newpage

\end{document}